\newcommand{\R}{\mathds{R}}
\newcommand{\Om}{\Omega}
\newcommand{\om}{\omega}
\newcommand{\bx}{\boldsymbol{x}}
\newcommand{\divv}{\operatorname{div}}
\newcommand{\dx}{\delta x}
\newcommand{\dr}{\delta r}
\newcommand{\tred}{\textcolor{red}}
\newcommand{\vol}{\operatorname{Vol}}
\DeclareMathOperator*{\Minimize}{Minimize}
\DeclareMathOperator*{\cotan}{cotan}
\newcommand{\arc}{\mathcal{A}}
\newcommand{\pairsvw}{\mathbb{A}}
\newcommand{\sa}{\mathcal{S}}
\newcommand{\rood}[1]{}
\newtheorem{assumption}{Assumption}
\newtheorem{lemma}{Lemma}
\newtheorem{thm}{Theorem}
\theoremstyle{definition}
\newtheorem{definition}{Definition}
\newtheorem{example}{Example}
\let\orgdescriptionlabel\descriptionlabel
\renewcommand*{\descriptionlabel}[1]{%
  \let\orglabel\label
  \let\label\@gobble
  \phantomsection
  \edef\@currentlabel{#1}%
  \let\label\orglabel
  \orgdescriptionlabel{#1}%
}
\definecolor{dkgreen}{rgb}{0,0.6,0}
\definecolor{gray}{rgb}{0.5,0.5,0.5}
\definecolor{mauve}{rgb}{0.58,0,0.82}
\tiny\color{black},
\begin{document}

\title{A Shape-Newton approach to the problem of covering\\ with identical balls\thanks{This work has been partially supported by FAPESP (grants 2013/07375-0, 2016/01860-1, 2018/24293-0, and 2019/25258-7) and CNPq (grants 302682/2019-8, 304258/2018-0, and 408175/2018-4).}}

    
\author{
  Ernesto G. Birgin\thanks{Department of Computer Science, Institute of
    Mathematics and Statistics, University of S\~ao Paulo, Rua do
    Mat\~ao, 1010, Cidade Universit\'aria, 05508-090, S\~ao Paulo, SP,
    Brazil. e-mails: egbirgin@ime.usp.br, rmassambone@ime.usp.br, and ags@ime.usp.br}
  \and
  Antoine Laurain\thanks{Department of Applied Mathematics, Institute of
    Mathematics and Statistics, University of S\~ao Paulo, Rua do
    Mat\~ao, 1010, Cidade Universit\'aria, 05508-090, S\~ao Paulo, SP,
    Brazil. e-mail: laurain@ime.usp.br}
  \and
  Rafael Massambone\footnotemark[2]
  \and
  Arthur G. Santana\footnotemark[2]
}

\date{June 4, 2021}

\maketitle

\begin{abstract}
The problem of covering a region 
of the plane with a fixed number of minimum-radius identical balls is studied in the present work. 
An explicit construction of bi-Lipschitz mappings is provided to model small perturbations of the union of balls.
This allows us to obtain analytical expressions for first- and second-order derivatives using nonsmooth shape optimization techniques under appropriate regularity assumptions. Singular cases are also studied using asymptotic analysis.
For the case of regions given by the union of disjoint convex polygons, algorithms based on Voronoi diagrams that do not rely on approximations are given to compute the derivatives.
Extensive numerical experiments illustrate the capabilities and limitations of the introduced approach. 

\vspace{1mm}
\noindent
\textbf{Keywords:} covering problem, nonsmooth shape optimization, Augmented Lagrangian, Newton's method.

\vspace{1mm}
\noindent
\textbf{AMS subject classification:} 49Q10, 49J52, 49Q12
\end{abstract}

\section{Introduction} \label{intro}

The problem of covering a region of the plane
with a fixed number of minimum-radius identical balls is studied in the present work by expanding the nonsmooth shape optimization approach introduced in \cite{coveringfirst}. 
The main challenge in this previous work was the the first-order shape sensitivity analysis with respect to perturbations of the balls' centers and radii. 
Therefore, investigating the second-order shape sensitivity is a natural albeit challenging extension of \cite{coveringfirst}. 

Shape optimization is the study of optimization problems where the variable is a geometric object; see~\cite{MR2731611,MR3791463,MR1215733}.
One of the key concepts in this discipline is the notion of \textit{shape derivative}, that measures the sensitivity of functions with respect to perturbations of the geometry.
The theoretical study of second-order shape derivatives is a difficult topic in shape optimization. There exists an abundant literature on the shape Hessian in the smooth setting \cite{Dambrine2000,Dambrine2003,MR2731611,MR1215733}; while in the nonsmooth setting it is still an active research topic \cite{MR2148282,LAURAIN2020328}. Numerical methods based on second-order shape derivative are rarely used in shape optimization due to several difficulties. First of all, the second-order shape  derivative is often  difficult to compute and costly to implement numerically, especially when partial differential equations are involved. 
Second, the shape Hessian presents several theoretical issues, such as the {\it two norms-discrepancy} and lack of coercivity, that have been extensively studied in control problems; see \cite{Afraites2008,Dambrine2000} and the references therein. There exist only few attempts at defining numerical methods based on second-order information in shape optimization. In \cite{EH2005}, a regularized shape-Newton method is introduced to solve an inverse problem for star-shaped geometries. Second order preconditioning of the shape gradient has been used in \cite{Hintermller2004} for image segmentation and in \cite{doi:10.1080/10618569908940813,Schmidt2009} for aerodynamic optimization.
Automatic shape differentiation has also been successfully employed to compute first- and second-order shape derivatives \cite{Ham2019,Schmidt2018}.
We also observe that the numerical investigations using Newton-type algorithms \cite{EH2005,Hintermller2004} are set in a relatively smooth setting. 
In \cite{LAURAIN2020328}, the shape Hessian was calculated for nonsmooth geometries and polygons in a form that was convenient  for numerical experiments, but no numerical investigations were performed. 
To the best of our knowledge, the present paper is the first  attempt at designing and analyzing a shape-Newton algorithm in a genuinely nonsmooth setting. 

From a theoretical perspective, the main achievement of \cite{coveringfirst} was to build bi-Lipschitz transformations to model the geometry perturbations corresponding to covering with identical balls. In the present work, these transformations are  key elements for the calculation of the second-order shape derivative, which, unlike the first-order shape derivative, differs from the expression that would be obtained in a smooth setting. Indeed, for the piecewise smooth shapes considered in the covering problem, various terms with a support at  singular boundary points, typically circles intersection,  appear in the shape Hessian.

Due to the generality of the regions to be covered considered in~\cite{coveringfirst}, in the presented numerical experiments, the function that measures the covering and its first-order derivatives were approximated with discretization strategies that may by very time consuming if high precision is required. In the present work, by restricting the region to be covered to be the union of disjoint convex polygons, algorithms based on Voronoi diagrams to compute the covering function and its first- and second-order derivatives \textit{without} relying on approximations are given.

The problem of covering a two-dimensional region with identical balls has already been considered in the literature. Covering equilateral triangles and squares was considered in~\cite{nurmelaT} and~\cite{nurmelaS}, respectively; while covering the union and difference of polygons was considered in~\cite{stoyan}. 
The covering of rectangles, triangles, squares and arbitrary regions was considered in~\cite{heppes}, \cite{melissenT1997}, \cite{melissenS1996} and~\cite{xavier2D}, respectively. 
However, the problem addressed in~\cite{xavier2D} actually consists of covering an arbitrary set of points, which is substantially different from the problem of covering an entire region. All of these papers approach the problem as an optimization problem.  In~\cite{heppes,melissenT1997,melissenS1996} a simulated annealing approach with local search in which the centers of the balls are chosen as points on an adaptive mesh is considered. In~\cite{nurmelaT,nurmelaS}, a discrete rule is used to define the radius; while a BFGS method is used to solve subproblems in which the radius is fixed. A feasible direction method that requires solving a linear programming problem at each iteration was proposed in~\cite{stoyan}. None of the mentioned works addresses the problem in a unified way as a continuous optimization problem, nor do they present first- or second-order derivatives of the functions that define the problem. In~\cite{birgin}, the problem of covering an arbitrary region is modeled as a nonlinear semidefinite programming problem using convex algebraic geometry tools. The introduced model describes the covering problem without resorting to discretizations, but it depends on some polynomials of unknown degrees whose coefficients are difficult to compute, limiting the applicability of the method. 

The rest of this paper is organized as follows. Section~\ref{sec:shape_opt_covering} presents a formal definition of the problem, the formula for the first-order derivative introduced in~\cite{coveringfirst}, and the formula for the second-order derivative being introduced in the present work. Section~\ref{sec:D2G_proof} presents the derivation of the second-order derivatives for non-degenerate cases; while degenerate cases are considered in Section~\ref{sec:singularcases}. Algorithms based on Voronoi diagrams for the exact calculation of the covering function and its first- and second-order derivatives are introduced in Section~\ref{sec:calc_algorithms}. Extensive numerical experiments are given in Section~\ref{sec:num_exp}. Final considerations are given in Section~\ref{sec:conclusion}.\\

\noindent
\textbf{Notation:} Given $x, y \in \R^n$, $x \cdot y = x^\top y \in \R$; while $x \otimes y = x y^\top \in \R^{n \times n}$. 
The divergence of   a sufficiently smooth vector field $\R^2\ni (x,y)\mapsto V(x,y)=(V_1(x,y),V_2(x,y))\in\R^2$ is defined by $\divv V := \frac{\partial V_1}{\partial x} + \frac{\partial V_2}{\partial y}$, and its Jacobian matrix is denoted $DV$.
Given an open set $S\in\R^n$, $\overline{S}$ denotes its closure, $\partial S = \overline{S} \setminus S$  its boundary, and $\vol(S)$ its volume.
Let $B(x_i,r)$ denote an open ball with center $x_i\in\R^2$ and radius $r$.
For a sufficiently smooth set $S\subset\R^2$,  $\nu_S(z)$ denotes the unitary-norm outwards normal vector to $S$ at $z$ and $\tau_S(z)$  the unitary-norm  tangent vector to $\partial S$ at~$z$ (pointing counter-clockwise).
In the particular case $S = B(x_i,r)$ we use the simpler notation $\nu_i:=\nu_{B(x_i,r)}$ and $\tau_i := \tau_{B(x_i,r)}$, and we have
$\nu_i(z) = (\cos \theta_z, \sin \theta_z)^\top$ and $\tau_i(z) = (-\sin \theta_z, \cos \theta_z)^\top$, 
where $\theta_z$ is the angular coordinate of $z-x_i$.
For intersection points $z\in \partial S \cap B(x_i,r)$, we also use the  notation $\nu_{-i}(z): = \nu_S(z)$.

\section{The shape optimization problem}\label{sec:shape_opt_covering}
Let $A\subset \R^2$  and $\Om(\bx,r) = \cup_{i=1}^m B(x_i,r)$ with $\bx:= \{x_i\}_{i=1}^m$. We consider the problem of covering $A$ using  a fixed number $m$ of identical balls $B(x_i, r)$ with minimum radius~$r$, i.e., we are looking for $(\bx, r) \in \R^{2m+1}$ such that $A \subset \Om(\bx, r)$ with minimum~$r$. The problem can be formulated as
\begin{equation} \label{prob1}
\Minimize_{(\bx,r) \in \R^{2m+1}} \; r \; \mbox{ subject to } \; G(\bx,r) = 0,
\end{equation}
where
\begin{align} \label{aquiG}
G(\bx,r) &:=  \vol(A) - \vol(A \cap \Om(\bx,r)).
\end{align}
Note that $G(\bx,r) = 0$ if and only if $A\subset \Om(\bx,r)$ up to a set of zero measure, i.e., when $ \Om(\bx,r)$ covers $A$.

The derivatives of~$G$ can be computed using techniques of shape calculus \cite{MR2731611,MR3791463,MR3535238,LAURAIN2020328,MR1215733}. In particular it was shown in~\cite{coveringfirst} that, under suitable assumptions,
\begin{align}\label{gradG}
\nabla G(\bx,r)
= -
\begin{pmatrix}
\displaystyle  \int_{\arc_1} \nu_1(z) \, dz,
\cdots,
\displaystyle  \int_{\arc_m} \nu_m(z) \, dz,
\displaystyle  \int_{\partial \Om(\bx,r)\cap A} \, dz
\end{pmatrix}^\top,
\end{align}
where
\begin{equation} \label{partition}
\mathcal{A}_i = \partial B(x_i,r) \cap \partial \Om(\bx,r) \cap A
\end{equation}
for $i=1,\dots,m$.

In the present work, we show that
\begin{equation} \label{derivadasegunda}
\nabla^2 G (\bx,r) = 
\begin{pmatrix}
\nabla^2_{\bx} G (\bx,r) & \nabla^2_{\bx,r} G (\bx,r)\\
\nabla^2_{\bx,r} G (\bx,r)^\top & \nabla^2_r G (\bx,r)
\end{pmatrix},
\end{equation}
where $\nabla^2_{\bx} G (\bx,r) \in \R^{2m \times 2m}$, $\nabla^2_{\bx,r} G (\bx,r) \in \R^{2m}$, and $\nabla^2_r G (\bx,r)= \partial_{r}^{2} G(\bx,r) \in \R$ are described below. Their description is based on the fact that each set $\arc_i$ can be represented by a finite number $m_i \geq 0$ of arcs of the circle $\partial B(x_i,r)$.
Note that, since $(\cup_{i=1}^m \partial B(x_i,r)) \cap \partial \Om(\bx,r) = \partial \Om(\bx,r)$, by~\eqref{partition}, 
\begin{equation} \label{unionAi}
\bigcup_{i=1}^m \arc_i = \partial\Om(\bx,r)\cap A,
\end{equation}
i.e., the union of all $\arc_i$ represents a partition of $\partial\Om(\bx,r)\cap A$; see Figure~\ref{fig:arcos}.
Each arc in $\mathcal{A}_i$ can be represented by a pair of points $(v,w)$, named starting and ending points, in counter-clockwise direction, i.e., such that the angular coordinates $\theta_v$ and $\theta_w$ of $v-x_i$ and $w-x_i$, respectively, satisfy $\theta_v \in [0,2\pi)$ and $\theta_w \in (\theta_v,\theta_v+2\pi]$; see Figure~\ref{fig:sec2}.
If $\arc_i$ is not a full circle, we denote by $\pairsvw_i$ the set of pairs $(v,w)$ that represent the arcs in $\arc_i$; otherwise, we define $\pairsvw_i=\emptyset$. In addition, if $\arc_i$ is a full circle, then we set $\mathrm{Circle}(\pairsvw_i)$ equal to true; otherwise, we set $\mathrm{Circle}(\pairsvw_i)$ equal to false. We say a configuration $(\bx,r)$ is non-degenerate if, for every $i=1,\dots,m$, every $(v,w) \in \pairsvw_i$, and every $z \in \{v,w\}$,  there exists one and only one $\nu_{-i}(z)$ and $\nu_{-i}(z) \cdot \tau_i(z) \neq 0$. A characterization of non-degenerate configurations, which satisfy Assumptions~\ref{a1} and~\ref{a3}, is given in the next section.



\begin{figure}[ht!]
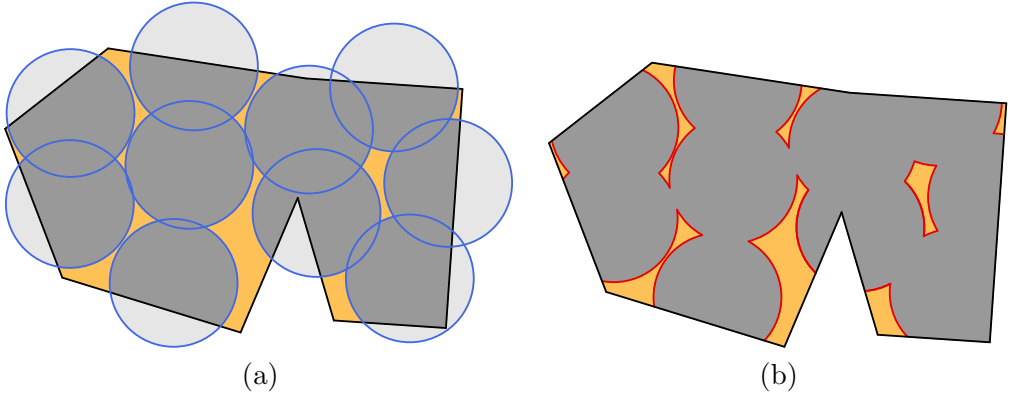

\begin{center}
\begin{tabular}{cc}
\includegraphics{A_omega.mps} & \includegraphics{A_partialOmega.mps} \\
(a) & (b)
\end{tabular}
\end{center}
\caption{(a) represents a region $A$ to be covered and an arbitrary configuration of balls $\Om(\bx,r)$. (b) represents, in red, $\partial \Om(\bx,r) \cap A$. Each $\mathcal{A}_i$ corresponds to the red arcs that intersect $\partial B(x_i,r)$. Note that, in this example, most sets $\arc_i$ contain two or three maximal arcs; and there is only one set $\arc_i$ with four maximal arcs.} 
\label{fig:arcos}
\end{figure}

Assuming~$(\bx,r)$ is non-degenerate, we have that $\nabla^2_r G(\bx,r)$ in~\eqref{derivadasegunda} is given by
\begin{equation} \label{d2Grr}
\nabla^2_r G(\bx,r) = 
- \frac{\mathrm{Per}(\partial\Om(\bx,r)\cap A)}{r}
- \sum_{i=1}^m  \sum_{(v,w) \in \pairsvw_i} \left\llbracket \, \frac{|L(z)| - \nu_{-i}(z) \cdot \nu_i(z)}{\nu_{-i}(z) \cdot \tau_i(z)} \, \right\rrbracket_v^w,
\end{equation}
where, for an arbitrary expression $\Phi(z)$, $\llbracket \Phi(z) \rrbracket_v^w := \Phi(w) - \Phi(v)$, $\mathrm{Per}(S)$ denotes the perimeter of the set~$S$, and, for an extreme $z$ of an arc represented by $(v,w) \in \pairsvw_i$, $L(z)=\{ \ell \in \{1,\dots,m\} \setminus \{i\} \;|\; z \in \partial B(x_\ell,r) \}$. 

Matrix $\nabla^2_{\bx} G (\bx,r)$ in~\eqref{derivadasegunda} is given by the $2 \times 2$ diagonal blocks
\begin{equation} \label{d2xixiG}
\partial^2_{x_i x_i} G(\bx,r) =
\frac{1}{r} \int_{\arc_i} - \nu_i(z) \otimes \nu_i(z) + \tau_i(z) \otimes \tau_i(z) \, dz + \sum_{(v,w) \in \pairsvw_i} \left\llbracket \, \frac{\nu_{-i}(z) \cdot \nu_i(z)}{\nu_{-i}(z) \cdot \tau_i(z)} \, \nu_i(z) \otimes \nu_i(z) \, \right\rrbracket_{v}^{w}
\end{equation}
and the $2 \times 2$ off-diagonal blocks
\begin{equation} \label{d2Gxixj}
\partial^2_{x_i x_\ell} G(\bx,r) = 
\sum_{v \in \mathcal{I}_{i\ell}} \frac{\nu_i(v) \otimes \nu_\ell(v)}{\nu_\ell(v) \cdot \tau_i(v)} - \sum_{w \in \mathcal{O}_{i\ell}} \frac{\nu_i(w) \otimes \nu_\ell(w)}{\nu_\ell(w) \cdot \tau_i(w)},
\end{equation}
where ${\cal I}_{i\ell} = \{ v \in \partial B(x_\ell,r) \; | \; (v,\cdot) \in \pairsvw_i \}$ and $\mathcal{O}_{i\ell} = \{ w \in \partial B(x_\ell,r) \; | \; (\cdot,w) \in \pairsvw_i \}$. (Note that ${\cal I}_{i\ell}=\mathcal{O}_{i\ell}=\emptyset$ for all $\ell\neq i$ if $\pairsvw_i=\emptyset$.)
Finally, array $\nabla^2_{\bx,r} G (\bx,r)$ in~\eqref{derivadasegunda} is given by the 2-dimensional arrays 
\begin{equation} \label{eq:781}
\partial^2_{x_i r} G(\bx,r) = 
- \frac{1}{r} \int_{\arc_i} \nu_i(z) \, dz
+ \sum_{(v,w) \in \pairsvw_i} \left\llbracket \, \frac{\nu_{-i}(z) \cdot \nu_i(z)}{\nu_{-i}(z) \cdot \tau_i(z)} \nu_i(z) - \sum_{\ell \in L(z)} \frac{\nu_i(z)}{\tau_i(z) \cdot \nu_{\ell}(z)} \, \right\rrbracket_v^w.
\end{equation}

\begin{figure}
\centering
\includegraphics{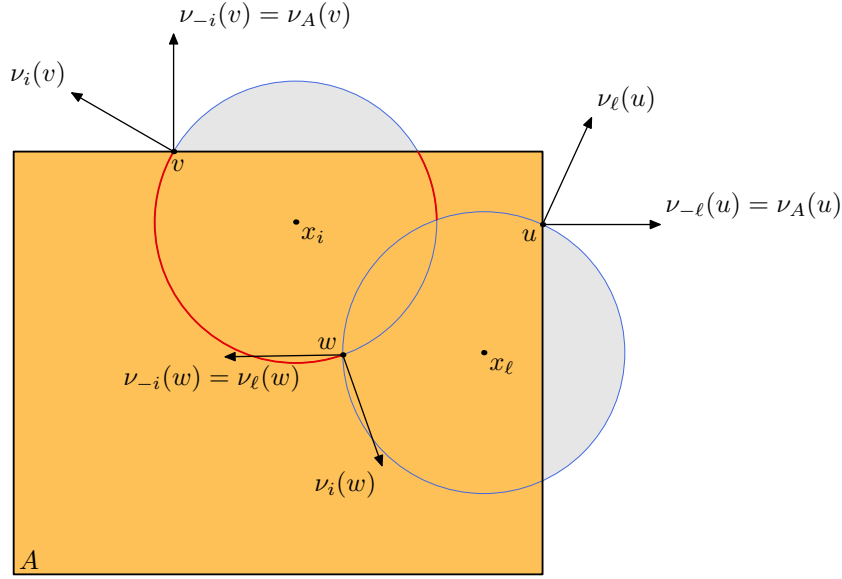}
\caption{The set $\arc_i = \partial B(x_i,r) \cap \Omega(\bx,r) \cap A$ is composed of two arcs (in red). If $z \in \partial B(x_i,r) \cap \partial B(x_{\ell},r)$ for some $\ell \neq i$, as for $z=w$, then $\nu_{-i}(z) = \nu_{\ell}(z)$, while if $z \in \partial B(x_i,r) \cap \partial A$, as for $z \in \{u,v\}$, then $\nu_{-i}(z) = \nu_{A}(z)$.}
\label{fig:sec2}
\end{figure}

\section{Proof of second-order differentiability of \texorpdfstring{$G$}{G}} \label{sec:D2G_proof}

In this section, we prove that the second-order derivatives of $G$, as defined in~\eqref{aquiG}, are given by (\ref{derivadasegunda}, \ref{d2Grr}, \ref{d2xixiG}, \ref{d2Gxixj}, \ref{eq:781}). 
In \cite{coveringfirst} we have built appropriate bi-Lipschitz  mappings $T_t$ in order to use integration by substitution for the differentiation of $G(\bx+t\delta\bx,r)$ and $G(\bx,r+t\dr)$. Some of the more technical aspects of these constructions were related to the fact that $G(\bx,r)$ is an area functional, which required defining $T_t$ on $\Om(\bx,r)\cap A$ and on $\partial(\Om(\bx,r)\cap A)$. 
Since $\nabla G$ only involves boundary integrals that in addition can be decomposed into integrals on arcs, this facilitates the construction of the mappings $T_t$ required for the calculation of $\nabla^2 G (\bx,r)$, as $T_t$ only needs to be defined on $\partial \Om(\bx,r)\cap A$.

We consider two  types of transformations for the shape sensitivity analysis.
First, in the case of fixed radius and center perturbations one needs a mapping $T_t$ between the reference set $\partial\Om(\bx,r)\cap A$ and the perturbed set $\partial\Om(\bx +t\delta \bx,r) \cap A$; see Theorem~\ref{thm:EB}.
Second, in the case of fixed centers and radius perturbation one needs a mapping $T_t$ between the reference set $\partial\Om(\bx, r)\cap A$ and the perturbed set  $\partial\Om(\bx, r+t\delta r)\cap A$; see Theorem~\ref{thm5}.
The shape sensitivity analysis of $\nabla G$ is then achieved through integration by substitution using $T_t$.
The construction of these mappings $T_t$ is similar to the constructions in \cite{coveringfirst}; however the results are presented in a different way as we need specific properties of $T_t$ to compute the derivatives of $\nabla G$. 
One of the main differences with respect to~\cite{coveringfirst} appears in Theorem~\ref{thm:EB}, where one considers a simultaneous perturbations of all the balls' center, which allows us to simplify the calculations of the Hessian of $G$. 
On the one hand,  $T_t$ was used in \cite{coveringfirst} mainly to prove first-order shape differentiability and its unusual structure did not affect the expression of the first-order shape derivative, in the sense that a similar formula would have been obtained in a smooth setting.   
On the other hand, the expression of the second-order shape derivative of~$G$ at a nonsmooth reference domain $\Om$ differs significantly    from the expression that would be obtained for a smooth $\Om$, as it involves terms with a support at singular boundary points of $\Om$, and the particular structure of $T_t$ now plays an important role in the calculation of those singular terms. 
This can be understood by considering that, unlike the first-order derivative,  the second-order shape derivative depends on the tangential component of $\left. \partial_t T_t\right|_{t= 0}$ on the boundary of the reference domain.

In \cite{coveringfirst}, we have described detailed conditions to avoid degenerate situations and we also discussed various examples of such degeneracies and how they may affect the numerical algorithm. 
In the present paper we use the same conditions  to prove second-order differentiability of $G$. To summarize, the main issues when studying the differentiability of $G$ arise when two balls are tangent or exactly superposed, when the boundaries of more than two balls intersect at the same point, or when $\Om(\bx,r)$ and $A$ are not compatible in the sense of Definition \ref{def:compatible}.
The role of Assumptions~\ref{a1} and \ref{a3}  is to avoid these singular cases, which allows us to prove second-order differentiability of $G$.
We emphasize that these assumptions  only exclude a null-measure set of balls' configurations in $\R^{2m+1}$, and in Section \ref{sec:singularcases} we show via the study of several singular cases that the second-order differentiability of~$G$ fails when these assumptions are not satisfied. 

\begin{assumption}\label{a1}
The centers $\{x_i\}_{i=1}^m$ satisfy $\|x_i-x_j\|\notin \{0,2r\}$ for all $1\leq i,j\leq m$, $i\neq j$ and $\partial B(x_i,r)\cap \partial B(x_j,r) \cap \partial B(x_k,r) =\emptyset$ for all $1\leq i,j,k\leq m$ with $i,j,k$ pairwise distinct.
\end{assumption}
\begin{definition}\label{def:compatible}
Let $\om_1,\om_2$ be open subsets of $\R^2$. 
We call $\om_1$ and $\om_2$ {\it compatible} if $\om_1\cap\om_2\neq\emptyset$, $\om_1$ and $\om_2$ are Lipschitz domains, and the following conditions hold:
(i) $\om_1\cap\om_2$ is a Lipschitz domain;
(ii) $\partial\om_1\cap \partial\om_2$ is finite;
(iii) $\partial\om_1$ and $\partial\om_2$ are locally smooth in a neighborhood of $\partial\om_1\cap \partial\om_2$;
(iv) $\tau_1(x)\cdot \nu_2(x)\neq 0$ for all $x\in \partial\om_1\cap \partial\om_2$, where $\tau_1(x)$ is a tangent vector to $\partial\om_1$ at $x$ and $\nu_2(x)$ is a normal vector to $\partial\om_2$ at $x$.
\end{definition}
\begin{assumption}\label{a3}
Sets  $\Om(\bx,r)$ and $A$ are compatible.
\end{assumption}
We observe that $\Om(\bx,r)$ is Lipschitz under Assumption~\ref{a1}, and if, in addition, the intersection of $\partial\Om(\bx,r)$ and $\partial A$ is empty, then Assumption~\ref{a3} holds. 
Hence, in this particular case we can drop  Assumption~\ref{a3} in Theorems~\ref{thm:EB} and~\ref{thm5}.

We also recall the following basic results, which are  key ingredients for the calculation of the shape Hessian of $G$.
\begin{thm}[Tangential divergence theorem]\label{lem:tandivpol}
Let $\Gamma\subset\R^2$ be a $C^k$ open curve, $k\geq 2$, with a parameterization $\gamma$, and denote $(v,w)$ the starting and ending points of $\Gamma$, respectively, with respect to $\gamma$.
Let $\tau$ be the unitary-norm  tangent vector to $\Gamma$, $\nu$ the unitary-norm normal vector to $\Gamma$, and $\mathcal{H}$ the mean curvature of $\Gamma$, with respect to the parameterization $\gamma$.
Let $F\in W^{1,1}(\Gamma,\R^2)\cap C^0(\overline{\Gamma},\R^2)$,
then we have 
$$ \int_{\Gamma} \divv_\Gamma(F) 
= \int_{\Gamma} \mathcal{H} F\cdot \nu +  F(w)\cdot \tau(w) - F(v) \cdot \tau(v)
= \int_{\Gamma} \mathcal{H} F\cdot \nu +  \llbracket F(z)\cdot \tau(z)\rrbracket_v^w,$$
where  $\divv_\Gamma(F):=\divv(F) - DF\nu\cdot\nu$ is the tangential divergence of $F$ on $\Gamma$.
\end{thm}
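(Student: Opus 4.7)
The plan is to reduce this identity to a one-dimensional integration by parts along $\Gamma$ using the arc-length parameterization and the planar Frenet formula. Since $\{\tau(z),\nu(z)\}$ forms an orthonormal basis of $\R^2$ at every $z\in\Gamma$, the trace decomposition gives $\divv(F)=DF\tau\cdot\tau+DF\nu\cdot\nu$, whence
\[
\divv_\Gamma(F)=\divv(F)-DF\nu\cdot\nu=DF\tau\cdot\tau.
\]
This purely algebraic observation rewrites the tangential divergence as a directional derivative of $F$ along the curve, and is the starting point of the argument.

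Next, I would fix an arc-length parameterization $\gamma\colon[0,L]\to\overline{\Gamma}$ with $\gamma(0)=v$, $\gamma(L)=w$, and $\tau(s)=\gamma'(s)$. For $F\in W^{1,1}$, the directional derivative $DF\tau$ coincides a.e.\ with $\tfrac{d}{ds}(F\circ\gamma)$. Applying the product rule to $\varphi(s):=F(\gamma(s))\cdot\tau(s)$ and invoking the planar Frenet identity $\tau'(s)=\mathcal{H}(\gamma(s))\,\nu(s)$, valid since $\Gamma$ is $C^k$ with $k\geq 2$ and $\mathcal{H}$ is defined consistently with the orientation induced by $\gamma$, one obtains
\[
\varphi'(s)=\tfrac{d}{ds}(F\circ\gamma)\cdot\tau(s)+F\cdot\tau'(s)=DF\tau\cdot\tau+\mathcal{H}\,F\cdot\nu \quad\text{a.e.}
\]
Hence $DF\tau\cdot\tau=\varphi'(s)-\mathcal{H}\,F\cdot\nu$ almost everywhere on $[0,L]$.

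The final step is to integrate this pointwise identity over $[0,L]$ and apply the fundamental theorem of calculus to $\varphi$, which produces $\int_0^L\varphi'(s)\,ds=F(w)\cdot\tau(w)-F(v)\cdot\tau(v)$; rearranging reproduces the first equality of the theorem, and the second is merely the notational rewriting through the bracket $\llbracket\cdot\rrbracket_v^w$. The only delicate point is justifying the FTC under the regularity $F\in W^{1,1}(\Gamma)\cap C^0(\overline{\Gamma})$: the Sobolev hypothesis makes $\varphi\in W^{1,1}(0,L)$, hence absolutely continuous on $(0,L)$, while the $C^0$ extension to $\overline{\Gamma}$ together with the $C^k$ regularity of $\tau$ identify the boundary traces of $\varphi$ with the well-defined pointwise values $F(v)\cdot\tau(v)$ and $F(w)\cdot\tau(w)$. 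I expect no serious obstacle beyond this regularity bookkeeping; all remaining tasks are algebra and the standard planar Frenet formula.
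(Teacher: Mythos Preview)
Your overall strategy—reduce $\divv_\Gamma F$ to $DF\tau\cdot\tau$ via the orthonormal trace decomposition, then integrate by parts along an arc-length parameterization using the planar Frenet relation—is correct and is in fact more explicit than the paper's own proof, which simply cites \cite[\S~7.2]{MR756417} and \cite[Ch.~9, \S~5.5]{MR2731611}.

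There is, however, a sign error in your Frenet step that flips the curvature term. With the conventions used in this paper—$\nu$ the outward normal, $\tau$ the counter-clockwise tangent, and $\mathcal{H}=1/r>0$ on a circle of radius $r$ (see the applications in Sections~\ref{sec:second_D_rG_fast}--\ref{sec:derxir})—the Frenet identity reads $\tau'(s)=-\mathcal{H}\,\nu$, not $+\mathcal{H}\,\nu$. Concretely, for $\gamma(\theta)=x_i+r(\cos\theta,\sin\theta)$ with $s=r\theta$ one has $\tau=(-\sin\theta,\cos\theta)$ and $\tau'(s)=-\tfrac{1}{r}(\cos\theta,\sin\theta)=-\tfrac{1}{r}\nu$. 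With this correction your computation becomes
\[
\varphi'(s)=DF\tau\cdot\tau-\mathcal{H}\,F\cdot\nu,\qquad\text{hence}\qquad \divv_\Gamma F=\varphi'(s)+\mathcal{H}\,F\cdot\nu,
\]
and integrating gives the stated identity with the correct $+\int_\Gamma \mathcal{H}\,F\cdot\nu$. As you wrote it, the argument would yield $\int_\Gamma \divv_\Gamma F=\llbracket F\cdot\tau\rrbracket_v^w-\int_\Gamma \mathcal{H}\,F\cdot\nu$, which is false: take $F=(1,0)^\top$ on a proper arc $[\theta_v,\theta_w]$ of the unit circle and you obtain $0=-2(\sin\theta_w-\sin\theta_v)$. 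Once the sign is fixed, the remaining regularity justification (absolute continuity of $\varphi$ from $F\in W^{1,1}(\Gamma)$, endpoint values from $F\in C^0(\overline\Gamma)$) is exactly right.
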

\begin{proof}
The result follows from  \cite[\S~7.2]{MR756417} and \cite[Ch.~9, \S~5.5]{MR2731611}.
\end{proof}

\begin{lemma}[Integration by substitution for line integrals]\label{lem:intsub}
Let $\Gamma\subset\R^2$ be a $C^k$ open curve, $k\geq 2$, and  $\nu$ a unitary-norm normal vector to $\Gamma$.
Let $F\in C^0(\overline{\Gamma},\R^2)$ and $T_t:\overline{\Gamma}\to T_t(\overline{\Gamma})$ be a bi-Lipschitz mapping. 
Then  
$$ \int_{T_t(\Gamma)} F(z)\, dz 
= \int_{\Gamma} F(T_t(z)) \omega_t,
$$
where 
\begin{equation}\label{density:changevar}
\omega_t(z) := \|M(z,t)\nu(z)\|
\end{equation}
and $M(z,t) := \det(DT_t(z)) DT_t(z)^{-\top}$ is the cofactor matrix of $DT_t(z)$.
Furthermore, we have 
\begin{equation}\label{density:der}
\partial_t\omega_t|_{t=0} = \divv_\Gamma V \text{ with } V: = \partial_t T_t|_{t=0} \text{ on } \Gamma.
\end{equation}
\end{lemma}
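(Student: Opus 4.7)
The plan is to reduce the first identity to the classical change-of-variables formula for line integrals by identifying $\omega_t(z)$ as the tangential stretch factor induced by $T_t$ on $\Gamma$. First I would fix a $C^k$ parametrization $\gamma:[a,b]\to\overline{\Gamma}$, so that $\tau = \gamma'/\|\gamma'\|$ and $T_t\circ\gamma$ parametrizes $T_t(\Gamma)$. The classical parametric formula for a line integral yields
\begin{equation*}
\int_{T_t(\Gamma)} F(z)\,dz = \int_a^b F(T_t(\gamma(s)))\,\|DT_t(\gamma(s))\gamma'(s)\|\,ds.
\end{equation*}
The remaining step is the purely algebraic identity $\|M(z,t)\nu(z)\| = \|DT_t(z)\tau(z)\|$. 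In two dimensions, if $R$ denotes the $90^\circ$ rotation, the well-known relation $\mathrm{cof}(A) = R\,A\,R^{-1}$ holds for every $2\times 2$ matrix $A$; applying this to $DT_t$ and using $\nu = \pm R^{\pm 1}\tau$ together with the orthogonality of $R$ gives the identity. Combining this with the change of variables and reverting from the parameter to the arclength integral delivers the first claim.

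For the derivative formula, I would start from $T_0 = \mathrm{Id}$, whence $DT_0 = I$, $M(z,0) = I$, and $\omega_0(z) = \|\nu\| = 1$. Since $\omega_0 > 0$ one may differentiate $\omega_t = \|M(z,t)\nu\|$ at $t=0$ using $\partial_t\|u(t)\||_{t=0} = u(0)\cdot u'(0)/\|u(0)\|$, which gives
\begin{equation*}
\partial_t\omega_t|_{t=0} = \nu \cdot \bigl(\partial_t M(z,t)|_{t=0}\bigr)\,\nu.
\end{equation*}
To compute $\partial_t M|_{t=0}$ I would invoke the Jacobi formula $\partial_t\det(DT_t)|_{t=0} = \mathrm{tr}(DV) = \divv V$ together with $\partial_t(DT_t^{-\top})|_{t=0} = -DV^\top$; combining through the Leibniz rule yields $\partial_t M|_{t=0} = (\divv V)\,I - DV^\top$. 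Substituting above gives $\partial_t\omega_t|_{t=0} = \divv V - DV\,\nu\cdot\nu$, which is exactly $\divv_\Gamma V$ by the definition recalled in Theorem~\ref{lem:tandivpol}.

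The only genuinely nontrivial ingredient is the two-dimensional cofactor identity $\|\mathrm{cof}(DT_t)\nu\| = \|DT_t\tau\|$; once that is in place, the first claim reduces to a standard parametrization argument and the derivative formula follows from the Jacobi formula and elementary matrix calculus. A mild technical point is that $T_t$ is only assumed bi-Lipschitz, so $DT_t$ exists merely almost everywhere by Rademacher's theorem, but this is harmless since both sides of the first identity are line integrals; the differentiation in $t$ tacitly requires joint smoothness of $(t,z)\mapsto T_t(z)$, which in the applications of this lemma is supplied by the explicit constructions of $T_t$ analogous to those in \cite{coveringfirst}.
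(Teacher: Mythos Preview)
Your argument is correct. The paper does not actually prove this lemma; its entire proof is the one-line reference ``See \cite[Prop.~5.4.3]{MR3791463}.'' Your self-contained derivation---reducing the change-of-variables identity to the two-dimensional cofactor relation $\mathrm{cof}(A)=RAR^{-1}$ (so that $\|\mathrm{cof}(DT_t)\nu\|=\|DT_t\tau\|$), and then obtaining $\partial_t\omega_t|_{t=0}=\divv V - DV\nu\cdot\nu$ from Jacobi's formula together with $\partial_t(DT_t^{-\top})|_{t=0}=-DV^\top$---is exactly the standard computation behind that citation, so in substance you are recovering the referenced proof rather than taking a different route. Your caveats about $T_0=\mathrm{Id}$ being tacitly assumed for \eqref{density:der}, and about $DT_t$ existing only a.e.\ under the bi-Lipschitz hypothesis, are both appropriate and consistent with how the lemma is applied in the paper.
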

\begin{proof}
See \cite[Prop.~5.4.3]{MR3791463}.
\end{proof}

\subsection{Construction of  a perturbation field for center perturbations}

Theorem \ref{thm:EB} below employs several ideas from \cite[Thm. 3.2 \& Thm. 3.6]{coveringfirst}.
However, an important difference is that we consider  simultaneous center perturbations for all balls instead of just one, which is more convenient for the calculation of $\nabla^2 G$.
Theorem \ref{thm:EB} provides an appropriate mapping $T_t$ for the differentiation of $\partial_{x_i} G (\bx+t\delta\bx,r)$ that will be used in Sections~\ref{sec:second_D_xiG} and~\ref{sec:derxixl} and for the differentiation of $\partial_{r} G (\bx+t\delta\bx,r)$ in Section~\ref{sec:derxir}.

\begin{thm}\label{thm:EB}
Suppose that Assumptions~\ref{a1} and \ref{a3} hold.
Then there exists $t_0>0$ such that  for all $t\in [0,t_0]$ we have the following  decomposition
\begin{equation}\label{eq:dec_E}
\partial \Om(\bx+t\delta\bx,r)\cap A = \bigcup_{k=1}^{\bar{k}} \sa_k(t),  
\end{equation}
where $\bar{k}$ is independent of $t$, 
 $\sa_k(t)$ are arcs parameterized by an angle aperture $[\theta_{k,v}(t), \theta_{k,w}(t) ]$, and $t\mapsto \theta_{k,v}(t)$,  $t\mapsto  \theta_{k,w}(t)$ are continuous functions on $[0,t_0]$.

Also, for all $t\in [0,t_0]$  there exists a bi-Lipschitz mapping $T_t:  \partial\Om(\bx,r)\cap A\to \R^2$ satisfying $T_t(\partial\Om(\bx,r)\cap A) = \partial\Om(\bx+t\delta\bx,r)\cap A $ and $T_t(\sa_k(0))=\sa_k(t)$ for all $k=1,\dots,\bar{k}$.
Furthermore, we have
\begin{equation}\label{eq:Vtheta_arc}
V := \left.\partial_t T_t\right|_{t= 0} 
=\dx_i +  \partial_t\xi(0,\theta) r \tau_i \text{ on } \sa_k(0)\subset \partial B(x_i,r),
\end{equation}
where $\xi$ is defined in \eqref{114} and
\begin{align}\label{eq:Vthetaa}
V(z) & = \dx_i  - \frac{\nu_A(z)\cdot \dx_i}{ \tau_i(z) \cdot \nu_A(z)}\tau_i(z)
\quad  \text{ if } z\in\partial B(x_i,r)\cap \partial A,\\
\label{eq:Vthetaa_2}
V(z) & = \dx_i  - \frac{\nu_\ell(z)\cdot (\dx_i-\dx_\ell)}{ \tau_i(z) \cdot \nu_\ell(z)}\tau_i(z)
\quad  \text{ if } z\in\partial B(x_i,r)\cap \partial B(x_\ell,r), i\neq\ell.
\end{align}
\end{thm}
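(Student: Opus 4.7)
The plan is to first control the topological structure of $\partial\Om(\bx+t\delta\bx,r)\cap A$ as $t$ varies, then to construct $T_t$ explicitly by reparameterizing each arc through its angular coordinate, and finally to differentiate at $t=0$ to verify the three formulas for $V$.

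First, the decomposition \eqref{eq:dec_E} should follow by tracking each endpoint of the arcs that compose $\partial\Om(\bx,r)\cap A$. Every such endpoint $z$ is either a transversal intersection of two circles $\partial B(x_i,r)$ and $\partial B(x_\ell,r)$ with $i\neq \ell$, or a transversal intersection of $\partial B(x_i,r)$ with $\partial A$; transversality is ensured by $\|x_i-x_\ell\|\notin\{0,2r\}$ from Assumption~\ref{a1} and by Definition~\ref{def:compatible}(iv). The implicit function theorem then produces $C^1$ trajectories $z(t)$ for each endpoint on a common interval $[0,t_0]$, and the exclusion of triple intersections in Assumption~\ref{a1} guarantees that no new endpoint appears for $t$ small. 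A compactness argument yields a single $t_0>0$ for which the decomposition holds with a fixed number $\bar{k}$ of arcs, and with the endpoint angles $\theta_{k,v}(t)$, $\theta_{k,w}(t)$ depending smoothly on $t$.

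Next, I would construct $T_t$ on each arc $\sa_k(0)\subset\partial B(x_i,r)$ by parameterizing it via its angle $\theta\in[\theta_{k,v}(0),\theta_{k,w}(0)]$ and by using the reparameterization $\xi(t,\theta)$ of \eqref{114}, which by construction sends $\theta_{k,v}(0)$ to $\theta_{k,v}(t)$ and $\theta_{k,w}(0)$ to $\theta_{k,w}(t)$. Explicitly, for $z = x_i + r(\cos\theta,\sin\theta)\in\sa_k(0)$, set
\[
T_t(z) := (x_i+t\dx_i) + r\bigl(\cos\xi(t,\theta),\sin\xi(t,\theta)\bigr),
\]
which, for $t_0$ small enough, is a bi-Lipschitz bijection from $\sa_k(0)$ onto $\sa_k(t)$. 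The delicate point is consistency at shared endpoints: the pieces of $T_t$ on two adjacent arcs must produce the same image at their common endpoint, and this holds because the angular trajectories $\theta_{k,v}(t)$ are exactly those delivered by the implicit function theorem in the first step.

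Finally, differentiating at $t=0$ immediately yields \eqref{eq:Vtheta_arc}, since $\partial_t(x_i+t\dx_i)|_{t=0}=\dx_i$ and $r\partial_\theta(\cos\theta,\sin\theta)^\top = r\tau_i(z)$. At an endpoint $z\in\partial B(x_i,r)\cap\partial A$, the fact that $T_t(z)\in\partial A$ for all $t$ forces $V(z)\cdot\nu_A(z)=0$, while the identity $|T_t(z)-(x_i+t\dx_i)|=r$ forces $(V(z)-\dx_i)\cdot\nu_i(z)=0$, hence $V(z)=\dx_i+\alpha\tau_i(z)$ for a scalar $\alpha$; solving the linear equation for $\alpha$ yields \eqref{eq:Vthetaa}. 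The analogous argument for $z\in\partial B(x_i,r)\cap\partial B(x_\ell,r)$, using the two circle constraints $(V-\dx_i)\cdot\nu_i=0$ and $(V-\dx_\ell)\cdot\nu_\ell=0$, yields \eqref{eq:Vthetaa_2}. I expect the main obstacle to be the bookkeeping in the first step: verifying that the combinatorial structure of the arcs --- which circle each lies on, which type each endpoint has, and their counter-clockwise ordering --- is invariant under the perturbation, so that $\bar{k}$ is genuinely independent of $t$ and the piecewise definition of $T_t$ glues consistently across endpoints.
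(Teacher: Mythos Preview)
Your proposal is correct and follows essentially the same route as the paper: both use the implicit function theorem (applied to the oriented-distance function to $A$, respectively to $\|\zeta(t,\vartheta)\|^2-r^2$) to track the endpoint angles smoothly in $t$, both build $T_t$ arcwise via the reparameterization $\xi(t,\theta)$ of \eqref{114}, and both differentiate at $t=0$ to get \eqref{eq:Vtheta_arc}. The only minor difference is in how you extract \eqref{eq:Vthetaa}--\eqref{eq:Vthetaa_2}: the paper computes the endpoint angular velocity $\vartheta'(0)$ explicitly from the implicit-function formula $-\partial_t\psi/\partial_\vartheta\psi$ and then substitutes into \eqref{eq:Vtheta_arc}, whereas you differentiate the two membership constraints (on $\partial A$ or on $\partial B(x_\ell,r)$, and on $\partial B(x_i,r)$) directly to solve a $2\times 2$ linear system for $V(z)$; these are equivalent and equally valid. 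Your identification of the combinatorial bookkeeping (invariance of $\bar k$ and consistency of $T_t$ at shared endpoints) as the main technical point is accurate; the paper handles it by citing the analogous construction in \cite{coveringfirst}.
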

\begin{proof} 
The decomposition \eqref{eq:dec_E} relies on Assumptions~\ref{a1} and \ref{a3} and is obtained in a similar way as in \cite[Thm.~3.2]{coveringfirst}. 
Therefore, in this proof we focus on the construction of the mapping $T_t$. We observe that each extremity of the arcs $\sa_k(t)$ in the decomposition \eqref{eq:dec_E} is either a point belonging to $\partial B(x_i+t\dx_i,r)\cap \partial A$ or a point in $\partial B(x_i+t\dx_i,r)\cap\partial B(x_\ell+t\dx_\ell,r)$.

We first provide a general formula for the angle $\vartheta(t)$, in local polar coordinates with the pole $x_i+t\dx_i$, describing  an intersection point between the circle $\partial B(x_i+t\dx_i,r)$ and $\partial A$.
Let $z\in\partial B(x_i,r)\cap \partial A$ and $\phi$ be the oriented distance function to $A$, defined as 
$ \phi(x) := d(x,A) - d(x,A^c)$, where  $d(x,A)$ is the distance from $x$ to the set $A$.
Since $\Om(\bx,r)$ and $A$ are compatible due to Assumption~\ref{a3}, it follows that $\partial A$ is locally smooth around the points $\partial B(x_i,r)\cap \partial A$, hence there exists a neighborhood $U_z$ of $z$ such that the restriction of $\phi$ to $U_z$ is smooth, $\phi(x) = 0$ and  $\|\nabla \phi(x)\| = 1$ for all $x\in\partial A \cap U_z$.  

Let $(r,\theta_z)$ denote the polar coordinates of $z$, with the pole $x_i$.
Introduce the function
$$\psi(t,\vartheta) =  \phi\left(   x_i +t\delta x_i + r \begin{pmatrix} \cos\vartheta \\ \sin\vartheta \end{pmatrix}\right) .$$
We compute 
$$\partial_{\vartheta} \psi(0,\theta_z) = r\begin{pmatrix} -\sin\theta_z \\ \cos\theta_z \end{pmatrix}
\cdot 
\nabla  \phi\left( x_i +  r \begin{pmatrix} \cos\theta_z \\ \sin\theta_z \end{pmatrix}\right)
= r \tau_i(z) \cdot \nabla  \phi(z).$$
Since   $\Om(\bx,r)$ and $A$ are compatible, $B(x_i,r)$ is not tangent to $\partial A$ and using  $\|\nabla \phi(z)\| = 1$ we obtain
$\tau_i(z) \cdot \nabla  \phi(z)  \neq 0$.
Thus, we can apply the implicit function theorem and this yields the existence of a smooth function $[0,t_0]\ni t\mapsto \vartheta(t)$ with $\psi(t,\vartheta(t)) = 0$ and $\vartheta(0) = \theta_z$.
We also compute, using $\nabla  \phi(z) = \|\nabla  \phi(z)\| \nu_A(z)$ since $\phi$ is the oriented distance function  to $\partial A$,
\begin{equation}\label{der_vartheta0}
\vartheta'(0) = -\frac{\partial_t \psi(0, \vartheta(0))}{\partial_\vartheta \psi(0, \vartheta(0))}
= - \frac{\nabla \phi(z)\cdot \dx_i}{r \tau_i(z) \cdot \nabla  \phi(z)} 
= - \frac{\nu_A(z)\cdot \delta x_i}{r \tau_i(z) \cdot \nu_A(z)}.
\end{equation}

We now consider the second case of an intersection point in $\partial B(x_i+t\dx_i,r)\cap\partial B(x_j+t\dx_\ell,r)$, $i\neq \ell$.
Introduce the functions
$$\psi(t,\vartheta) =  \|\zeta(t,\vartheta)\|^2  -  r^2 \ \text{ with }\ \zeta(t,\vartheta) = x_i +t\dx_i - x_\ell -t\dx_\ell + r  \begin{pmatrix} \cos\vartheta \\ \sin\vartheta \end{pmatrix}.$$
Observe that $\vartheta \mapsto \zeta(t,\vartheta)$ is a parameterization of the circle $\partial B(x_i+t\dx_i,r)$ in a coordinate system of center $x_\ell$, which means that the solutions of the equation $\psi(t,\vartheta)=0$ describe the intersections between  $\partial B(x_i+t\dx_i,r)$ and  $\partial B(x_\ell+t\dx_\ell,r)$.
We compute
$\partial_{\vartheta} \psi(0,\vartheta) =  2 \zeta(0,\vartheta) \cdot \partial_{\vartheta}  \zeta(0,\vartheta) $
with 
\begin{align*}
\zeta(0,\vartheta) &= x_i - x_\ell + r \begin{pmatrix} \cos\vartheta \\ \sin\vartheta \end{pmatrix} \quad
\text{ and }\quad\partial_{\vartheta} \zeta(0,\vartheta) =r \begin{pmatrix} -\sin\vartheta \\ \cos\vartheta \end{pmatrix}.
\end{align*}
Now let $z\in\partial B(x_i,r)\cap\partial B(x_\ell,r)$ and 
let $\theta_z$ be the corresponding angle in a polar coordinate system with pole $x_i$.
Since Assumption~\ref{a1} is satisfied, it is easy to see that
\begin{equation*}
\partial_{\vartheta} \psi(0,\theta_z) =  2 \zeta(0,\theta_z) \cdot \partial_{\vartheta}  \zeta(0,\theta_z) \neq 0. 
\end{equation*}
Hence, the implicit function theorem can be applied to  $(t,\vartheta) \mapsto \psi(t,\vartheta)$ in a neighbourhood of $(0,\theta_z)$. 
This yields the existence, for $t_0$ sufficiently small, of a smooth function $t\mapsto\vartheta(t)$ in $[0,t_0]$ such that $\psi(t,\vartheta(t)) = 0$ in $[0,t_0]$ and $\vartheta(0) = \theta_z$.
We also have the derivative
\begin{equation*}
\vartheta'(t) = -\frac{\partial_t \psi(t,\vartheta(t))}{\partial_{\vartheta} \psi(t,\vartheta(t))}
= -\frac{ \zeta(t,\vartheta(t)) \cdot \partial_t  \zeta(t,\vartheta(t)) }{  \zeta(t,\vartheta(t)) \cdot \partial_{\vartheta}  \zeta(t,\vartheta(t)) },
\end{equation*}
and in particular, using $\nu_i = (\cos\theta_z,  \sin\theta_z)^\top$ and $\tau_i = (-\sin\theta_z,  \cos\theta_z)^\top$,
\begin{equation}\label{theta_prime_a_0}
\vartheta'(0)
= -\frac{ ( x_i - x_\ell + r \nu_i)\cdot (\dx_i -\dx_\ell)}{ ( x_i - x_\ell + r\nu_i)\cdot (r \tau_i)}
= -\frac{ \nu_\ell\cdot (\dx_i - \dx_\ell)}{ r  \nu_\ell\cdot \tau_i}.
\end{equation}

We are now ready to build the mapping $T_t$.
Let $\sa(t)\subset \partial B(x_i+t\dx_i,r)$ be one of the arcs  parameterized by the angle aperture $[\theta_v(t), \theta_w(t) ]$ in the decomposition~\eqref{eq:dec_E}; we have dropped the index $k$ for simplicity. 
Then, $\theta_v(t)$ and $\theta_w(t)$ are given by $\vartheta(t)$ with either  $\theta_z=\theta_v(0)$ or $\theta_z=\theta_w(0)$, and $\vartheta(t)$ either corresponds to an intersection $\partial B(x_i+t\dx_i,r)\cap \partial A$ or to an intersection $\partial B(x_i+t\dx_i,r)\cap\partial B(x_\ell+t\dx_\ell,r)$. 
Thus we define $T_t$ on the arc $\sa(0)$ as
\begin{equation}\label{113}
T_t(x) :=  x_i + t\dx_i +r  \begin{pmatrix} \cos\xi(t,\theta)  \\ \sin\xi(t,\theta) \end{pmatrix} 
 \text{ with }
 x = x_i  +r \begin{pmatrix} \cos\theta \\ \sin\theta \end{pmatrix}
 \in \sa(0) ,
 \end{equation}
where
\begin{equation}\label{114}
\resizebox{0.9\textwidth}{!}{$
 \xi(t,\theta) := \alpha(t) (\theta - \theta_w(0)) + \theta_w(t) \text{ for } (t,\theta)\in [0,t_0]\times [\theta_v(0),\theta_w(0)] \text{ and }
 \alpha(t) := \frac{\theta_w(t) - \theta_v(t)}{\theta_w(0) - \theta_v(0)}.
 $}
\end{equation}
The bi-Lipschitz property of $T_t$ on $\partial\Om(\bx,r)\cap A$ is obtained as in the proof of \cite[Thm.~3.3]{coveringfirst}.

Finally, differentiating in \eqref{113} with respect to $t$ and using $\xi(0,\theta)=\theta$ we get \eqref{eq:Vtheta_arc}.
Then \eqref{114} yields  $\xi(t,\theta_v(0)) =  \theta_v(t)$, $\xi(t,\theta_w(0)) =  \theta_w(t)$, $\partial_t\xi(0,\theta_v(0)) = \theta'_a(0)$, $ \partial_t\xi(0,\theta_w(0)) = \theta'_b(0)$, consequently  using \eqref{der_vartheta0} we obtain \eqref{eq:Vthetaa}  and using \eqref{theta_prime_a_0} we obtain \eqref{eq:Vthetaa_2}. 
\end{proof}

\subsection{Construction of  a perturbation field for radius perturbations}\label{sec:der_rr}

Theorem~\ref{thm5} below relies on several ideas from \cite[Thm. 3.3 \& Thm. 3.8]{coveringfirst}, and provides an appropriate mapping $T_t$ for the differentiation of $\partial_{r} G (\bx,r+t\dr)$ that will be used in Section~\ref{sec:second_D_rG_fast}. 
\begin{thm}\label{thm5}
Suppose that Assumptions~\ref{a1} and \ref{a3} hold.
Then there exists $t_0>0$ such that  for all $t\in [0,t_0]$ we have the following  decomposition
\begin{equation}\label{eq:dec_Eradius}
\partial \Om(\bx,r+t\dr)\cap A = \bigcup_{k=1}^{\bar{k}} \sa_k(t),  
\end{equation}
where $\bar{k}$ is independent of $t$, $\sa_k(t)$ are arcs parameterized by an angle aperture $[\theta_{k,v}(t), \theta_{k,w}(t) ]$, and $t\mapsto \theta_{k,v}(t)$,  $t\mapsto  \theta_{k,w}(t)$ are continuous functions on $[0,t_0]$.

Also, for all $t\in [0,t_0]$  there exists a bi-Lipschitz mapping $T_t:  \partial\Om(\bx,r)\cap A\to \R^2$ satisfying $T_t(\Om(\bx,r)\cap A) = \partial\Om(\bx,r+t\dr)\cap A $ and $T_t(\sa_k(0))=\sa_k(t)$ for all $k=1,\dots,\bar{k}$.
In addition, we have
\begin{equation}\label{eq:Vtheta_arc_radius}
V := \left.\partial_t T_t\right|_{t= 0} 
=\dr \nu_i +  \partial_t\xi(0,\theta) r \tau_i \text{ on } \sa_k(0)\subset \partial B(x_i,r),
\end{equation}
where $\xi$ is defined in \eqref{114} and
\begin{align}\label{eq:Vthetaa_radius}
V(z) & = \dr \nu_i(z)  - \dr\frac{\nu_A(z)\cdot \nu_i(z)}{ \tau_i(z) \cdot \nu_A(z)}\tau_i(z)
\quad  \text{ if } z\in\partial B(x_i,r)\cap \partial A,\\
\label{eq:Vthetaa_2_radius}
V(z) & = \dr \nu_i(z)   + \dr \frac{ 1 - \nu_\ell(z)\cdot\nu_i(z)}{ \tau_i(z) \cdot \nu_\ell(z)}\tau_i(z)
\quad  \text{ if } z\in\partial B(x_i,r)\cap \partial B(x_\ell,r), i\neq\ell.
\end{align}
\end{thm}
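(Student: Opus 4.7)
The plan is to mirror the strategy used in the proof of Theorem~\ref{thm:EB}, adapting each step to the case of a simultaneous radius perturbation $r\mapsto r+t\dr$ for all balls. First, I would obtain the decomposition \eqref{eq:dec_Eradius} exactly as in \cite[Thm.~3.2]{coveringfirst}, whose arguments depend only on Assumptions~\ref{a1} and~\ref{a3} and on the continuity in $t$ of the intersection points; the radius perturbation case requires no structural change compared with the center perturbation case. Once the decomposition is available, the main task is to construct $T_t$ and identify its time derivative at $t=0$, and for this the key intermediate step is computing the angular velocities $\vartheta'(0)$ of each extremity of the arcs $\sa_k(t)$ via the implicit function theorem.

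The extremities fall into two types. For an endpoint $z\in \partial B(x_i,r)\cap \partial A$, I would set
\[
\psi(t,\vartheta) = \phi\!\left(x_i + (r+t\dr)\begin{pmatrix}\cos\vartheta\\ \sin\vartheta\end{pmatrix}\right),
\]
where $\phi$ is the oriented distance function to $A$, so that $\partial_\vartheta\psi(0,\theta_z)=r\,\tau_i(z)\cdot\nu_A(z)\neq 0$ by compatibility, and $\partial_t\psi(0,\theta_z) = \dr\,\nu_i(z)\cdot\nu_A(z)$. The implicit function theorem then gives a smooth $\vartheta(t)$ with $\vartheta(0)=\theta_z$ and
\[
\vartheta'(0) = -\frac{\dr\,\nu_A(z)\cdot\nu_i(z)}{r\,\tau_i(z)\cdot\nu_A(z)}.
\]
For an endpoint $z\in\partial B(x_i,r)\cap\partial B(x_\ell,r)$ with $i\neq\ell$, I would use
\[
\psi(t,\vartheta) = \|\zeta(t,\vartheta)\|^2 - (r+t\dr)^2, \qquad \zeta(t,\vartheta) = x_i-x_\ell + (r+t\dr)\begin{pmatrix}\cos\vartheta\\ \sin\vartheta\end{pmatrix},
\]
since both circles dilate together. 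Using $z-x_\ell = r\,\nu_\ell(z)$ and $\zeta(0,\theta_z)=r\,\nu_\ell(z)$ one finds $\partial_\vartheta\psi(0,\theta_z) = 2r^2\,\nu_\ell(z)\cdot\tau_i(z)$, nonzero by Assumption~\ref{a1}, while the $-(r+t\dr)^2$ term contributes an extra $-2\dr\,r$ compared to the center-perturbation calculation, giving $\partial_t\psi(0,\theta_z) = 2\dr\, r\,(\nu_\ell(z)\cdot\nu_i(z)-1)$. Hence
\[
\vartheta'(0) = \frac{\dr\,(1-\nu_\ell(z)\cdot\nu_i(z))}{r\,\tau_i(z)\cdot\nu_\ell(z)}.
\]

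With these formulas in hand, I would define $T_t$ on each arc $\sa_k(0)\subset\partial B(x_i,r)$ by
\[
T_t(x) := x_i + (r+t\dr)\begin{pmatrix}\cos\xi(t,\theta)\\ \sin\xi(t,\theta)\end{pmatrix}, \qquad x = x_i + r\begin{pmatrix}\cos\theta\\ \sin\theta\end{pmatrix}\in\sa_k(0),
\]
with $\xi$ as in \eqref{114}, interpolating linearly between the angular endpoints $\theta_v(t)$ and $\theta_w(t)$ determined above. The bi-Lipschitz property on $\partial\Om(\bx,r)\cap A$ follows verbatim from the argument of \cite[Thm.~3.3]{coveringfirst}: on each arc the map is smooth with nonvanishing Jacobian for $t$ small, and continuity across the common endpoints of neighbouring arcs is guaranteed by construction since each endpoint angle is defined from the same implicit function. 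Differentiating the displayed formula for $T_t$ at $t=0$, using $\xi(0,\theta)=\theta$, yields immediately \eqref{eq:Vtheta_arc_radius}, and substituting the two expressions for $\vartheta'(0)$ at the endpoints gives \eqref{eq:Vthetaa_radius} and \eqref{eq:Vthetaa_2_radius}.

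The main obstacle, as in \cite{coveringfirst}, is checking that $T_t$ is globally bi-Lipschitz on $\partial\Om(\bx,r)\cap A$ rather than merely piecewise smooth; this is why the interpolation rule \eqref{114} is phrased in terms of the common implicit-function angles, ensuring that pieces agree at the singular points $\partial B(x_i,r)\cap\partial B(x_\ell,r)$ and $\partial B(x_i,r)\cap\partial A$ and that the resulting tangential component of $V$ is single-valued and continuous along each arc. The calculation of $\vartheta'(0)$ in the ball-ball case is the only place where the derivation differs substantively from Theorem~\ref{thm:EB}, because the extra term from $-(r+t\dr)^2$ produces the $1-\nu_\ell\cdot\nu_i$ factor (rather than $\nu_\ell\cdot(\dx_i-\dx_\ell)$) appearing in \eqref{eq:Vthetaa_2_radius}; keeping track of this is the sole novel computational step.
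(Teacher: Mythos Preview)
Your proposal is correct and follows essentially the same route as the paper's own proof: the decomposition is deferred to \cite[Thm.~3.2]{coveringfirst}, the two types of arc extremities are handled by the same implicit-function arguments with the same $\psi$ functions, yielding the same expressions for $\vartheta'(0)$, and $T_t$ is defined by the same formula $T_t(x)=x_i+(r+t\dr)(\cos\xi(t,\theta),\sin\xi(t,\theta))^\top$ with $\xi$ from \eqref{114}, with the bi-Lipschitz property deferred to \cite[Thm.~3.3]{coveringfirst}. The only cosmetic difference is that you compute $\partial_t\psi(0,\theta_z)$ and $\partial_\vartheta\psi(0,\theta_z)$ explicitly in the ball--ball case before forming the quotient, whereas the paper keeps them in the form $\zeta\cdot\partial_t\zeta - r\dr$ and $\zeta\cdot\partial_\vartheta\zeta$ until the end; the results are identical.
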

\begin{proof}
The proof has the same structure as the proof of Theorem \ref{thm:EB}, i.e., we separate the two cases of a point belonging to $\partial B(x_i,r+t\dr)\cap \partial A$ and a point in $\partial B(x_i,r+t\dr)\cap\partial B(x_\ell,r+t\dr)$.
The decomposition \eqref{eq:dec_Eradius} relies on Assumptions~\ref{a1} and \ref{a3} and is obtained in a similar way as in \cite[Thm.~3.2]{coveringfirst}. 

First we consider the case of a point in $\partial B(x_i,r+t\dr)\cap \partial A$.
We provide a general formula for the angle $\vartheta(t)$, in local polar coordinates with pole $x_i$, describing  such an intersection point. Let $z\in\partial B(x_i,r)\cap \partial A$ and $(r,\theta_z)$ denote the polar coordinates of $z$ with center $x_i$. Let  $\phi$ be the oriented distance function to $A$ defined as in the proof of Theorem \ref{thm:EB}.
Introduce the function
$$\psi(t,\vartheta) =  \phi\left(   x_i + (r+t\dr) \begin{pmatrix} \cos\vartheta \\ \sin\vartheta \end{pmatrix}\right) .$$
We compute 
$$\partial_{\vartheta} \psi(0,\theta_z) = r\begin{pmatrix} -\sin\theta_z \\ \cos\theta_z \end{pmatrix}
\cdot 
\nabla  \phi\left( x_i +  r \begin{pmatrix} \cos\theta_z \\ \sin\theta_z \end{pmatrix}\right)
= r \tau_i(z) \cdot \nabla  \phi(z).$$
Since   $\Om(\bx,r)$ and $A$ are compatible due to Assumption~\ref{a3}, $B(x_i,r)$ is not tangent to $\partial A$ and using  $\|\nabla \phi(z)\| = 1$ we obtain
$\tau_i(z) \cdot \nabla  \phi(z)  \neq 0$.
Thus, we can apply the implicit function theorem and this yields the existence of a smooth function $[0,t_0]\ni t\mapsto \vartheta(t)$ with $\psi(t,\vartheta(t)) = 0$ and $\vartheta(0) = \theta_z$.
We also compute the derivative
\begin{equation}\label{der_vartheta}
\vartheta'(0) = -\frac{\partial_t \psi(0, \vartheta(0))}{\partial_\vartheta \psi(0, \vartheta(0))}
=  -\frac{\partial_t \psi(0, \vartheta(0))}{\partial_\vartheta \psi(0, \vartheta(0))}
= - \frac{\dr \nabla \phi(z)\cdot \nu_i(z)}{r \tau_i(z) \cdot \nabla  \phi(z)} 
= - \frac{\dr\nu_A(z)\cdot \nu_i(z)}{r \tau_i(z) \cdot \nu_A(z)},
\end{equation}
where we have used $\nabla  \phi(z) = \|\nabla  \phi(z)\| \nu_A(z)$ since $\phi$ is the oriented distance function  to $\partial A$.

Now we provide a general formula for the angle $\vartheta(t)$, in local polar coordinates with pole $x_i$, describing  an intersection point of two circles $\partial B(x_i,r + t\dr)$ and  $\partial B(x_\ell,r + t\dr)$, $i\neq \ell$.
Introduce 
$$\psi(t,\vartheta) =  \|\zeta(t,\vartheta)\|^2  -  (r+t\dr)^2 \ \text{ with }\ \zeta(t,\vartheta) = x_i - x_\ell + (r + t\dr) \begin{pmatrix} \cos\vartheta \\ \sin\vartheta \end{pmatrix}.$$
Observe that $\vartheta \mapsto \zeta(t,\vartheta)$ is a parameterization of the circle $\partial B(x_i,r + t\dr)$ in a coordinate system of center $x_\ell$, which means that the solutions of  $\psi(t,\vartheta)=0$ describe the intersections between  $\partial B(x_i,r + t\dr)$ and  $\partial B(x_\ell,r + t\dr)$.
We compute
$\partial_{\vartheta} \psi(0,\vartheta) =  2 \zeta(0,\vartheta) \cdot \partial_{\vartheta}  \zeta(0,\vartheta) $
with 
\begin{align*}
\zeta(0,\vartheta) &= x_i - x_\ell + r \begin{pmatrix} \cos\vartheta \\ \sin\vartheta \end{pmatrix} \quad
\text{ and }\quad\partial_{\vartheta} \zeta(0,\vartheta) =r \begin{pmatrix} -\sin\vartheta \\ \cos\vartheta \end{pmatrix}.
\end{align*}
Now let $z\in\partial B(x_i,r)\cap\partial B(x_\ell,r)$ and 
let $\theta_z$ be the corresponding angle in a polar coordinate system with pole $x_i$.
Since the conditions of Assumption~\ref{a1} hold, it is easy to see that
\begin{equation*}
\partial_{\vartheta} \psi(0,\theta_z) =  2 \zeta(0,\theta_z) \cdot \partial_{\vartheta}  \zeta(0,\theta_z)  \neq 0. 
\end{equation*}
Hence, the implicit function theorem can be applied to  $(t,\vartheta) \mapsto \psi(t,\vartheta)$ in a neighbourhood of $(0,\theta_z)$. 
This yields the existence, for $t_0$ sufficiently small, of a smooth function $t\mapsto\vartheta(t)$ in $[0,t_0]$ such that $\psi(t,\vartheta(t)) = 0$ in $[0,t_0]$ and $\vartheta(0) = \theta_z$.
We also have the derivative
\begin{equation*}
\vartheta'(t) = -\frac{\partial_t \psi(t,\vartheta(t))}{\partial_{\vartheta} \psi(t,\vartheta(t))}
= -\frac{ \zeta(t,\vartheta(t)) \cdot \partial_t  \zeta(t,\vartheta(t))   -  (r+t\dr)\dr}{  \zeta(t,\vartheta(t)) \cdot \partial_{\vartheta}  \zeta(t,\vartheta(t)) },
\end{equation*}
and in particular, using $\nu_i = (\cos\theta_z,  \sin\theta_z)^\top$ and $\tau_i = (-\sin\theta_z,  \cos\theta_z)^\top$, 
\begin{equation}\label{theta_prime_a_2}
\vartheta'(0)
= -\frac{ ( x_i - x_\ell + r \nu_i)\cdot (\dr\nu_i)   - r\dr}{ ( x_i - x_\ell + r \nu_i)\cdot (r \tau_i)}
= \frac{\dr}{r} \left( \frac{1 -   \nu_\ell\cdot \nu_i }{  \nu_\ell\cdot \tau_i} \right).
\end{equation}

We are now ready to build the mapping $T_t$.
Let $\sa(t)\subset \partial B(x_i,r+t\dr)$ be one of the arcs  parameterized by the angle aperture $[\theta_v(t), \theta_w(t) ]$ in the decomposition~\eqref{eq:dec_Eradius}; we have dropped the index $k$ for simplicity. 
Then, $\theta_v(t)$ and $\theta_w(t)$ are given by $\vartheta(t)$ with either  $\theta_z=\theta_v(0)$ or $\theta_z=\theta_w(0)$, and $\vartheta(t)$ either corresponds to a point in $\partial B(x_i,r+t\dr)\cap \partial A$ or to a point in $\partial B(x_i,r+t\dr)\cap\partial B(x_\ell,r+t\dr)$. 

Next, define $\xi(t,\theta)$ and $\alpha(t)$ as in \eqref{114}.
Then, for $\theta\in [\theta_v(0),\theta_w(0)]$ we have
$\xi(t,\theta) \in [\theta_v(t),\theta_w(t)]$ and $\xi(t,\theta)$ is a parameterization of $\sa(t)$.
A point $x\in \sa(0)$ may be parameterized  by
\begin{equation}\label{eq:Tt_r}
x = x_i +r \begin{pmatrix} \cos\theta \\ \sin\theta \end{pmatrix},
\ \text{ and define }\ 
\mathds{T}_t(\theta) :=  x_i + (r+t\dr) \begin{pmatrix} \cos\xi(t,\theta)  \\ \sin\xi(t,\theta) \end{pmatrix} .
 \end{equation}
Writing  $\xi(t,\theta) = \theta + \beta(t,\theta)$ with $\beta(t,\theta) := (\alpha(t) -1)(\theta -\theta_b(t))$, we observe that
$$ \begin{pmatrix} \cos\xi(t,\theta)  \\ \sin\xi(t,\theta) \end{pmatrix}  = R(x_i, \beta(t,\theta))  \begin{pmatrix} \cos\theta \\ \sin\theta \end{pmatrix} = R(x_i, \beta(t,\theta)) \nu_i,$$
where $R(x_i, \beta(t,\theta))$ is a rotation matrix of center $x_i$ and angle $\beta(t,\theta)$.
Also, thanks to $\theta_v(0)<\theta_w(0)<\theta_v(0)+ 2\pi$ and $\theta\in [\theta_v(0),\theta_w(0)]$, there exists a smooth bijection $\theta:\arc\ni x\mapsto \theta(x)\in [\theta_v(0),\theta_w(0)]$.
Thus, using \eqref{eq:Tt_r} we can define the mapping
\begin{equation}\label{eq:Ttarc}
 T_t(x) := \mathds{T}_t(\theta(x))  = x - r \nu_i(x)  +  (r+t\dr)R(x_i, \beta(t,\theta(x))) \nu_i(x)  \text{ for all }x\in  \sa(0)\subset \partial B(x_i,r).  
\end{equation}
The bi-Lipschitz property of  $T_t$ on $\partial\Om(\bx,r)\cap A$ can be obtained as in the proof of \cite[Thm.~3.3]{coveringfirst}.

Finally, differentiating in \eqref{eq:Tt_r} with respect to $t$ and using $\xi(0,\theta)=\theta$ we get \eqref{eq:Vtheta_arc_radius}.
Then \eqref{114} yields  $\xi(t,\theta_a(0)) =  \theta_a(t)$, $\xi(t,\theta_b(0)) =  \theta_b(t)$, $\partial_t\xi(0,\theta_a(0)) = \theta'_a(0)$, $ \partial_t\xi(0,\theta_b(0)) = \theta'_b(0)$, consequently  using \eqref{der_vartheta} we obtain \eqref{eq:Vthetaa_radius}  and using \eqref{theta_prime_a_2} we obtain \eqref{eq:Vthetaa_2_radius}. 
\end{proof}

\subsection{Second-order derivative  of \texorpdfstring{$G$}{G} with respect to the radius}\label{sec:second_D_rG_fast}

The first-order derivative of $G$ with respect to the radius is given by
$$\partial_r G(\bx,r)  =  -\int_{\partial\Om(\bx,r)\cap A}  \, dz,$$
see \eqref{gradG} and \cite[\S3.3]{coveringfirst} for the detailed calculation.
As in \cite{coveringfirst}, the calculation is achieved through integration by substitution using the mapping $T_t$ given by Theorem \ref{thm5}, which requires that Assumption~\ref{a1} and Assumption~\ref{a3} hold.
According to Theorem \ref{thm5}, there exists a bi-Lipschitz mapping $T_t$ satisfying $T_t(\partial\Om(\bx,r)\cap A) = \partial \Om(\bx, r+t\delta r)\cap A$, and this yields, using Lemma \ref{lem:intsub} on each arc of $\partial\Om(\bx,r)\cap A$,
\begin{align*}
\partial_r G(\bx,r + t\dr) &= - \int_{\partial\Om(\bx,r+ t\dr)\cap A}\, dz
= - \int_{T_t(\partial\Om(\bx,r)\cap A)} \, dz
= - \int_{\partial\Om(\bx,r)\cap A} \omega_t(z) \, dz.
\end{align*}
Thus, using Lemma \ref{lem:intsub} and the decomposition \eqref{unionAi}, we compute
\begin{align*}
\left.\frac{d}{dt} \partial_r G(\bx,r + t\dr)\right|_{t= 0} 
& = - \int_{\partial\Om(\bx,r)\cap A} \divv_\Gamma V(z) \, dz
 = - \sum_{i=1}^m \int_{\arc_i} \divv_\Gamma V(z) \, dz.
\end{align*}
Applying Theorem \ref{lem:tandivpol} for each arc in $\arc_i$, we obtain
\begin{align}\label{eq:333}
\left.\frac{d}{dt} \partial_r G(\bx,r + t\dr)\right|_{t= 0} 
& 
 = - \sum_{i=1}^m \int_{\arc_i} \mathcal{H} V\cdot \nu_i \, dz
 - \sum_{i=1}^m  \sum_{(v,w) \in \pairsvw_i} \llbracket V(z)\cdot \tau_i(z)\rrbracket_v^w.
 \end{align}
 
To get a more explicit formula we need to determine $V(v),V(w)$ and $V\cdot \nu_i$ on $\arc_i$.
For this we apply Theorem \ref{thm5} to two different cases.
On the one hand, if $v\in\partial B(x_i,r)\cap \partial B(x_\ell,r)$ for some  $i\neq\ell$, then applying \eqref{eq:Vthetaa_2_radius} we obtain
\begin{align}\label{zkin}
 V(v)\cdot \tau_i(v) =  \dr  \frac{1 - \nu_\ell(v)\cdot \nu_i(v)}{\nu_\ell(v)\cdot \tau_i(v)}. 
\end{align}
On the other hand, if $v\in\partial B(x_i,r)\cap \partial A$, then applying 
\eqref{eq:Vthetaa_radius}
we get
\begin{equation}\label{zkin2}
V(v)\cdot \tau_i(v) = - \dr\frac{\nu_A(v)\cdot \nu_i(v)}{\tau_i(v) \cdot \nu_A(v)}.
\end{equation}
Then, recalling that $L(z)=\{ \ell \in \{1,\dots,m\} \setminus \{i\} \;|\; z \in \partial B(x_\ell,r) \}$ for $z \in \{v,w\}$, and that $\nu_{-i}(z):=\nu_\ell(z)$ if $z\in\partial B(x_i,r)\cap \partial B(x_\ell,r)$, $\ell\neq i$, and $\nu_{-i}(z):=\nu_A(z)$ if $z\in\partial B(x_i,r)\cap \partial A$,
 we can merge \eqref{zkin} and \eqref{zkin2} into a unique formula:
\begin{equation}\label{zkin3}
V(v)\cdot \tau_i(v) = \dr\frac{|L(v)| - \nu_{-i}(v)\cdot \nu_i(v)}{\nu_{-i}(v)\cdot\tau_i(v)}.
\end{equation}
In a similar way, we also obtain
\begin{equation*}
V(w)\cdot \tau_i(w) = \dr\frac{|L(w)| - \nu_{-i}(w)\cdot \nu_i(w)}{\nu_{-i(w)}\cdot\tau_i(w)}.
\end{equation*}

Gathering these results we get
\begin{align*}
\left.\frac{d}{dt} \partial_r G(\bx,r + t\dr)\right|_{t= 0} 
& 
 = -  \dr\frac{\mathrm{Per}(\partial\Om(\bx,r)\cap A)}{r}
 - \dr  \sum_{i=1}^m  \sum_{(v,w) \in \pairsvw_i} \left\llbracket \, \frac{|L(z)| - \nu_{-i}(z) \cdot \nu_i(z)}{\nu_{-i}(z) \cdot \tau_i(z)} \, \right\rrbracket_v^w,
 \end{align*}
where we have used 
$\mathcal{H} V\cdot \nu_i = \frac{\dr}{r} \text{ on } \arc_i\subset\partial B(x_i,r)$ due to  \eqref{eq:Vtheta_arc_radius} and $\mathcal{H}=1/r$.
Thus we have obtained \eqref{d2Grr}.

\subsection{Second-order derivative of \texorpdfstring{$G$}{G} with respect to the centers}\label{sec:second_D_xiG}

The first-order derivative of $G$ with respect to the center $x_i$ is given by
$$\partial_{x_i} G(\bx,r) = - \int_{\arc_i} \nu_i(z) \, dz.$$
see \eqref{gradG} and \cite[\S~3.4]{coveringfirst} for the detailed calculation.
As in \cite{coveringfirst}, the calculation is achieved through integration by substitution using the mapping $T_t$ from Theorem \ref{thm:EB} with the specific  perturbation $\delta \bx = (0,\dots,0,\delta x_i, 0,\dots, 0)$, which requires that Assumption~\ref{a1} and Assumption~\ref{a3} hold.
Using Lemma \ref{lem:intsub} yields 
\begin{align*}
\partial_{x_i} G(\bx + t\delta \bx,r) 
&= - \int_{\partial B(x_i+ t\delta x_i,r)\cap \partial\Om(\bx+ t\delta \bx,r)\cap A} \nu_t(z) \, dz
= - \int_{T_t(\arc_i)} \nu_t(z) \, dz\\
& = - \int_{\arc_i} \nu_t(T_t(z)) \omega_t(z)  \, dz,
\end{align*}
where $\nu_t$ is the outward unit normal vector to $\partial B(x_i+ t\delta x_i,r)\cap \partial\Om(\bx+ t\delta \bx,r)\cap A$ and $\omega_t$ is given by \eqref{density:changevar}.

To obtain the derivative of $\partial_{x_i} G(\bx + t\delta \bx,r) $ with respect to $t$ at $t=0$ we need  the so-called {\it material derivative} of the normal vector given by
$$\frac{d}{dt}\nu_t(T_t(z))|_{t=0} = - (D_\Gamma V)^\top \nu_i \text{ on } \arc_i,$$
with $V := \partial_t T_t|_{t= 0}$; see [Walker, Lemma 5.5, page 99].
Here, $D_\Gamma V: = DV - (DV)\nu_i\otimes\nu_i$ denotes the tangential Jacobian of $V$ on $\arc_i$.
Then, using \eqref{density:der} we obtain
\begin{align*}
\left.\frac{d}{dt}  \partial_{x_i} G(\bx + t\delta \bx,r) \right|_{t= 0} 
& = - \int_{\arc_i} - (D_\Gamma V)^\top \nu_i + \nu_i\divv_\Gamma(V)  \, dz.
\end{align*}
This expression can be further transformed  using the following tensor relations:
\begin{align}
\label{tensor:rel}\divv_\Gamma(\nu_i\otimes V) = \divv_\Gamma (V)\nu_i + (D_\Gamma \nu_i)V
\text{ and }  \nabla_\Gamma (V\cdot\nu_i) = D_\Gamma \nu_i^\top V + D_\Gamma V^\top \nu_i \text{ on } \arc_i.
\end{align}
We show that $D_\Gamma \nu_i^\top V = D_\Gamma \nu_i V$ on $\arc_i$. 
Indeed, let $W\in\R^2$ and denote $V_\tau$ and $W_\tau$ the tangential components of $V$ and $W$ on $\arc_i$.
Differentiating $\nu_i\cdot\nu_i = 1$  on $\arc_i$ we get $(D_\Gamma \nu_i)^\top \nu_i = 0$ and then
\begin{align*}
(D_\Gamma \nu_i)^\top V \cdot W 
= (D_\Gamma \nu_i)^\top V_\tau \cdot W
= (D_\Gamma \nu_i)^\top V_\tau \cdot W_\tau
= (D_\Gamma \nu_i) V_\tau \cdot W_\tau,
\end{align*}
where we have used the  well-known fact that the second fundamental form $(V_\tau,W_\tau)\mapsto (D_\Gamma \nu_i) V_\tau \cdot W_\tau$ is symmetric. Further,
\begin{align}\label{sym:dgamma_nu}
(D_\Gamma \nu_i)^\top V \cdot W 
= (D_\Gamma \nu_i)^\top W_\tau \cdot V_\tau
= (D_\Gamma \nu_i)^\top W \cdot V
= (D_\Gamma \nu_i) V \cdot W  \text{ on } \arc_i.
\end{align}
Now, using \eqref{tensor:rel}, \eqref{sym:dgamma_nu} we obtain
\begin{align*}
\left.\frac{d}{dt}  \partial_{x_i} G(\bx + t\delta \bx,r) \right|_{t= 0} & = - \int_{\arc_i} \divv_\Gamma(\nu_i\otimes V)  - \nabla_\Gamma (V\cdot\nu_i)\, dz.
\end{align*}
Applying Theorem \ref{lem:tandivpol} to the integral of  $\divv_\Gamma(\nu_i\otimes V)$ on each arc in $\arc_i$ we get
\begin{align*}
\left.\frac{d}{dt}  \partial_{x_i} G(\bx + t\delta \bx,r) \right|_{t= 0} 
& = - \int_{\arc_i} \mathcal{H}(\nu_i\otimes V)\cdot \nu_i - \nabla_\Gamma (V\cdot\nu_i) \, dz
- \sum_{(v,w) \in \pairsvw_i}   \llbracket(\nu_i(z)\otimes V(z))\cdot \tau_i(z)\rrbracket_v^w,
 \end{align*}
and then, using $\mathcal{H}=1/r$ on $\arc_i$,
 \begin{align}\label{eq:577}
 \begin{split}
\left.\frac{d}{dt}  \partial_{x_i} G(\bx + t\delta \bx,r) \right|_{t= 0}  
& = - \frac{1}{r}  \int_{\arc_i}(V\cdot \nu_i)\nu_i    -    r\nabla_\Gamma (V\cdot\nu_i) \, dz
- \sum_{(v,w) \in \pairsvw_i} \llbracket  (V(z)\cdot \tau_i(z))\nu_i(z)\rrbracket_v^w.
\end{split}
\end{align}

Applying \eqref{eq:Vtheta_arc} yields
$V\cdot \nu_i  =  \dx_i\cdot \nu_i \text{ on } \arc_i.$
Considering that $\dx_\ell=0$ for $\ell\neq i$ since we use the specific perturbation $\delta \bx = (0,\dots,0,\delta x_i, 0,\dots, 0)$, \eqref{eq:Vthetaa} and \eqref{eq:Vthetaa_2}
actually provide the same formula in this particular case:
\begin{align}\label{eq:vtau}
\begin{split}
V(z)\cdot \tau_i(z) &=  \dx_i\cdot \left(\tau_i  - \frac{\nu_{-i}}{\tau_i \cdot \nu_{-i}}\right)(z)\\
& = - \left(\frac{\nu_{-i} \cdot \nu_i}{ \nu_{-i}\cdot \tau_i }   \dx_i\cdot \nu_i\right)(z) \text{ for }z\in \{v, w\} \text{ and } (v,w)\in\pairsvw_i.
\end{split}
\end{align}
We also have
$\nabla_\Gamma (V\cdot\nu_i) 
=   \nabla_\Gamma (\dx_i\cdot \nu)
= (D_\Gamma\nu)^\top  \dx_i$ and
$$ D_\Gamma\nu_i = D_\Gamma \begin{pmatrix} \cos\theta \\ \sin\theta\end{pmatrix}
= \begin{pmatrix} \nabla_\Gamma (\cos\theta)^\top \\ \nabla_\Gamma (\sin\theta)^\top\end{pmatrix}
= 
\frac{1}{r}\begin{pmatrix} \partial_\theta (\cos\theta)\tau_i^\top \\ \partial_\theta (\sin\theta)\tau_i^\top\end{pmatrix}
=  \frac{1}{r}  \tau_i\otimes \tau_i
\text{ on }\arc_i.$$
Gathering these results and using $V\cdot \nu_i  =  \dx_i\cdot \nu_i \text{ on } \arc_i$ we get
\begin{align*}
\left.\frac{d}{dt}  \partial_{x_i} G(\bx + t\delta \bx,r) \right|_{t= 0} 
& = - \frac{1}{r}  \int_{\arc_i}(\dx_i\cdot \nu_i)\nu_i  - (\tau_i\otimes \tau_i)\dx_i\, dz
+  \sum_{(v,w) \in \pairsvw_i}   \left\llbracket\frac{\nu_{-i} \cdot \nu_i}{ \nu_{-i}\cdot \tau_i }  \nu_i\otimes  \nu_i\right\rrbracket_v^w \dx_i,
\end{align*}
which yields \eqref{d2xixiG}.

\subsection{Second order derivative with respect to \texorpdfstring{$x_i$}{xi} and \texorpdfstring{$x_\ell$}{xl} of \texorpdfstring{$G$}{G}}\label{sec:derxixl}

As in Section \ref{sec:second_D_xiG} we use the mapping $T_t$ from Theorem \ref{thm:EB}, which requires that Assumptions~\ref{a1} and \ref{a3} hold, but now  with the specific  perturbation $\delta \bx = (0,\dots,0,\delta x_\ell, 0,\dots, 0)$.
This yields a transformation $T_t$ satisfying in particular $T_t(\arc_i) = \partial B(x_i,r)\cap \partial\Om(\bx+ t\delta \bx,r)\cap A$.
Then, using Lemma \ref{lem:intsub} we obtain
\begin{align*}
\partial_{x_i} G(\bx + t\delta \bx,r) 
&= - \int_{\partial B(x_i,r)\cap \partial\Om(\bx+ t\delta \bx,r)\cap A} \nu_t(z) \, dz
= - \int_{T_t(\arc_i)} \nu_t(z) \, dz
= - \int_{\arc_i} \nu_t(T_t(z)) \omega_t(z)  \, dz,
\end{align*}
where $\nu_t$ is the outward unit normal vector to $\partial B(x_i,r)\cap \partial\Om(\bx+ t\delta \bx,r)\cap A$ and $\omega_t$ is given by \eqref{density:changevar}.
Applying \eqref{eq:Vtheta_arc} and considering that $\dx_i=0$ since we are using the specific  perturbation $\delta \bx = (0,\dots,0,\delta x_\ell, 0,\dots, 0)$, we get
\begin{equation}\label{eq:743}
V\cdot \nu_i  =   0
\text{ on } \arc_i.
\end{equation}
Then, applying \eqref{eq:Vthetaa_2} with $\dx_i = 0$ we get
\begin{align}\label{eq:744}
 V(z)\cdot\tau_i(z) 
 &= \frac{\dx_\ell\cdot \nu_\ell(z)}{\tau_i(z)\cdot \nu_\ell(z)}\quad  \text{ if } z\in \partial B(x_\ell,r)\cap\partial B(x_i,r), i\neq \ell.
\end{align}

Next, the derivative of $\partial_{x_i} G(\bx + t\delta \bx,r)$ with respect to $t$ at $t=0$ is already calculated in \eqref{eq:577}, but the terms $(V\cdot \nu_i)\nu_i$ and  $\nabla_\Gamma (V\cdot\nu_i)$ in \eqref{eq:577} vanish due to \eqref{eq:743}.
We also observe that $V(z) = 0$ if $z\in\{v,w\}$ with $(v,w) \in \pairsvw_i$ and $z\notin \partial B(x_\ell,r)$.
Finally, using ${\cal I}_{i\ell} = \{ v \in \partial B(x_\ell,r) \; | \; (v,\cdot) \in \pairsvw_i \}$, $\mathcal{O}_{i\ell} = \{ w \in \partial B(x_\ell,r) \; | \; (\cdot,w) \in \pairsvw_i \}$
 and \eqref{eq:744} we get
\begin{align*}
\left.\frac{d}{dt}  \partial_{x_i} G(\bx + t\delta \bx,r) \right|_{t= 0} 
& = \sum_{v \in {\cal I}_{i\ell}}   V(v)\cdot \tau_i(v)\nu_i(v)  - \sum_{w \in {\cal O}_{i\ell}}    V(w)\cdot \tau_i(w)\nu_i(w) \\
 & = \left[   \sum_{v \in {\cal I}_{i\ell}}   \frac{\nu_i(v)\otimes \nu_\ell(v)}{\nu_\ell(v)\cdot \tau_i(v)} 
 -\sum_{w \in {\cal O}_{i\ell}}   \frac{\nu_i(w)\otimes \nu_\ell(w)}{\nu_\ell(w)\cdot \tau_i(w)}\right] \dx_\ell,
\end{align*}
which yields \eqref{d2Gxixj}.

\subsection{Second order derivative with respect to \texorpdfstring{$x_i$}{xi} and \texorpdfstring{$r$}{r} of \texorpdfstring{$G$}{G}}\label{sec:derxir}

In a similar way as in Sections \ref{sec:second_D_xiG} and \ref{sec:derxixl},
we use the mapping $T_t$ from Theorem \ref{thm:EB} with the specific  perturbation $\delta \bx = (0,\dots,0,\delta x_i, 0,\dots, 0)$. 
This yields, using Lemma \ref{lem:intsub},
\begin{align*}
\partial_r G(\bx + t\delta \bx,r) &= - \int_{\partial\Om(\bx + t\delta \bx,r)\cap A}\, dz
= - \int_{T_t(\partial\Om(\bx,r)\cap A)} \, dz
= - \int_{\partial\Om(\bx,r)\cap A} \omega_t(z) \, dz.
\end{align*}
Proceeding as in the calculation leading to \eqref{eq:333}, we get
\begin{align}\label{eq:777}
\begin{split}
\left.\frac{d}{dt} \partial_r G(\bx + t\delta \bx,r)\right|_{t= 0} 
& 
 = - \sum_{\ell=1}^m   \int_{\arc_\ell} \mathcal{H} V\cdot \nu_\ell \, dz
 - \sum_{\ell=1}^m  \sum_{(v,w) \in \pairsvw_\ell} \llbracket V(z)\cdot \tau_\ell(z)\rrbracket_v^w.
\end{split}
\end{align}
 
Considering that $\dx_\ell=0$ for $\ell\neq i$, since we use the specific perturbation $\delta \bx = (0,\dots,0,\delta x_i, 0,\dots, 0)$, \eqref{eq:Vthetaa} and \eqref{eq:Vthetaa_2}
actually provide the same formula in this particular case:
\begin{align}\label{eq:772b}
\begin{split}
V(z)\cdot \tau_i(z) &=  \dx_i\cdot \left(\tau_i  - \frac{\nu_{-i}}{\tau_i \cdot \nu_{-i}}\right)(z)\\
& = - \left(\frac{\nu_{-i} \cdot \nu_i}{ \nu_{-i}\cdot \tau_i }   \dx_i\cdot \nu_i\right)(z) \text{ for }z\in \{v, w\} \text{ and } (v,w)\in\pairsvw_i, 
\end{split}
\end{align}
and also 
\begin{align*} 
V(z)\cdot \tau_\ell(z)
&=  \dx_i \cdot \left( \tau_\ell - \frac{\nu_\ell}{\tau_i \cdot \nu_\ell} (\tau_i\cdot\tau_\ell)\right)(z)\\
&= \dx_i \cdot \left( \frac{\mu}{\tau_i \cdot \nu_\ell} \right)(z)
 \text{ if } z\in\partial B(x_i,r)\cap \partial B(x_\ell,r) \text{ and } \ell\neq i.  
\end{align*}
with $\mu:= \tau_\ell (\tau_i\cdot \nu_\ell) - (\tau_i\cdot \tau_\ell)\nu_\ell$. This yields $\mu\cdot\tau_i = 0$ and 
$$ \mu\cdot \nu_i = (\tau_\ell\cdot \nu_i)(\tau_i\cdot \nu_\ell) - (\tau_i\cdot \tau_\ell)(\nu_\ell\cdot \nu_i)
= - (\tau_i\cdot \nu_\ell)^2 - (\tau_i\cdot \tau_\ell)^2 = -1,$$
where we have used the geometric properties $\tau_\ell\cdot \nu_i = - \tau_i\cdot \nu_\ell$ and $\tau_i\cdot \tau_\ell = \nu_\ell\cdot \nu_i$.
Thus  $\mu = -\nu_i$ and  we get
\begin{align}\label{eq:774}
 V(z)\cdot \tau_\ell(z)
= - \frac{\dx_i \cdot \nu_i(z)}{\tau_i(z) \cdot \nu_\ell(z)}  \text{ if } z\in\partial B(x_i,r)\cap \partial B(x_\ell,r) \text{ and } \ell\neq i. 
\end{align}

In \eqref{eq:777}, we observe that $V(z)=0$ whenever $z\in\{v,w\}$ and $z\notin \partial B(x_i,r)$; this can be seen from \eqref{eq:Vthetaa}-\eqref{eq:Vthetaa_2} and the fact that we use the specific  perturbation $\delta \bx = (0,\dots,0,\delta x_i, 0,\dots, 0)$. 
Hence, recalling that  $L(z)=\{ \ell \in \{1,\dots,m\} \setminus \{i\} \;|\; z \in \partial B(x_\ell,r) \}$,
\begin{align*}
\sum_{\ell=1}^m  \sum_{(v,w) \in \pairsvw_\ell} \llbracket V(z)\cdot \tau_\ell(z)\rrbracket_v^w
&= 
\sum_{(v,w) \in \pairsvw_i} \llbracket V(z)\cdot \tau_i(z)\rrbracket_v^w 
+
\sum_{\substack{\ell=1 \\ \ell\neq i}}^m  \sum_{(v,w) \in \pairsvw_\ell} \llbracket V(z)\cdot \tau_\ell(z)\rrbracket_v^w\\
&= 
\sum_{(v,w) \in \pairsvw_i} \llbracket V(z)\cdot \tau_i(z)\rrbracket_v^w 
-
\sum_{(v,w) \in \pairsvw_i}   \left\llbracket \sum_{\ell\in L(z)}   V(z)\cdot \tau_\ell(z)\right\rrbracket_v^w.
\end{align*}
Note that the negative sign  in front of the last sum is due to the fact that if an ending point of an arc in $\pairsvw_\ell$ belongs to some arc in $\pairsvw_i$, then it is a starting point for this arc in $\pairsvw_i$, and vice versa.
Using \eqref{eq:Vtheta_arc} we have $V\cdot \nu_\ell \equiv 0$ on $\arc_\ell$ for all $\ell\neq i$. 
Since $\mathcal{H} = 1/r$, we may write \eqref{eq:777} as
\begin{align}\label{eq:778}
\begin{split}
\left.\frac{d}{dt} \partial_r G(\bx + t\delta \bx,r)\right|_{t= 0} 
& 
 = - \frac{1}{r}\int_{\arc_i} V\cdot \nu_i \, dz
 -  \sum_{(v,w) \in \pairsvw_i} \left\llbracket V(z)\cdot \tau_i(z)
-
\sum_{\ell\in L(z)}   V(z)\cdot \tau_\ell(z)\right\rrbracket_v^w.
\end{split}
\end{align}
Using \eqref{eq:Vtheta_arc} we get $V\cdot \nu_i = \dx_i\cdot\nu_i$ on $\arc_i$.
Finally, using \eqref{eq:772b}-\eqref{eq:774} we get
\begin{align*}
\begin{split}
\left.\frac{d}{dt} \partial_r G(\bx + t\delta \bx,r)\right|_{t= 0} 
& 
 = -  \frac{1}{r} \int_{\arc_i} \dx_i\cdot\nu_i \, dz\\
&\quad +  \sum_{(v,w) \in \pairsvw_i} 
\left\llbracket\frac{\nu_{-i}(z) \cdot \nu_i(z)}{ \nu_{-i}(z)\cdot \tau_i(z) }   \dx_i\cdot \nu_i(z)
 -  \sum_{\ell\in L(z)}   \frac{\dx_i \cdot \nu_i(z)}{\tau_i(z) \cdot \nu_\ell(z)} 
 \right\rrbracket_v^w ,
\end{split}
\end{align*}
which yields \eqref{eq:781}.

\section{Analysis of singular cases}\label{sec:singularcases}

The gradient $\nabla G$ and Hessian $\nabla^2 G$ were obtained under Assumptions~\ref{a1} and \ref{a3}, and in this section we investigate several singular cases where these assumptions are not satisfied.
On the one hand, it is shown in \cite[\S~3.5]{coveringfirst} that $G$ is often differentiable even when Assumptions~\ref{a1} and \ref{a3} do not hold, and in the few cases where  $G$ is not differentiable it is at least Gateaux semidifferentiable.
On the other hand,  $G$ is never twice differentiable in any of the singular geometric configurations studied in this section. 
Nevertheless,  Gateaux semidifferentiability of the components of $\nabla G$ can often be proven.

We recall here that $f:\R^n\to\R$ is Gateaux semidifferentiable at $x$ in the direction $v$ if
$$ \lim_{t\searrow 0} \frac{f(x+tv) - f(x) }{t} \text{ exists in } \R^n,$$
and that $f$ has a derivative in the direction $v$ at $x$ if
$$ \lim_{t\to 0} \frac{f(x+tv) - f(x) }{t} \text{ exists in } \R^n.$$

\begin{example} \label{examp1}
Suppose $m=2$, $\Om(\bx +t\delta\bx,r)\subset A$ for all $t\in [0,t_0]$ and $t_0$ sufficiently small, and the two balls are tangent at $t=0$, i.e., $\| x_1 -x_2 \|= 2r$; thus Assumption \ref{a1} is not satisfied.
Two cases need to be considered to compute the gradient of $G$.
First, if $(x_1 -x_2)\cdot (\dx_1 - \dx_2) \geq  0$, then it is clear that $B(x_1 +t\dx_1,r)\cap B(x_2+t\dx_2,r) =\emptyset$ for all $t\in [0,t_0]$. 
Therefore $G(\bx+t\delta\bx,r)=G(\bx,r)=\vol(A) - 2\pi r^2$ for all $t\in [0,t_0]$, and  $ \lim_{t\searrow 0} (G(\bx+t\delta\bx,r) - G(\bx,r) )/t = 0$.
Second, if $(x_1 -x_2)\cdot (\dx_1 - \dx_2) <  0$ then $B(x_1 +t\dx_1,r)\cap B(x_2+t\dx_2,r) \neq \emptyset$ for all $t \in (0,t_0]$.
The intersection of $B(x_1 +t\dx_1,r)$ and $B(x_2+t\dx_2,r)$ form a symmetric lens whose area is given by 
$$a(t) = 2r^2 \arccos \left( d(t)/ 2r \right) -d(t) \left( r^2 - d(t)^2/4 \right)^{1/2},$$
where $d(t):= \|x_1 + t\dx_1 - (x_2 +t\dx_2)\|$.
It is convenient to rewrite this expression as
$$ a(t) = 2r^2 \arccos \left( (1-g(t))^{1/2} \right) - 2r^2\left( g(t) + g(t)^2 \right)^{1/2},$$
with $g(t):= -(2t  (x_1 -x_2)\cdot (\dx_1 - \dx_2) + t^2 \| \dx_1 - \dx_2 \|^2)/(4r^2)$, $g(t)\geq 0$ for all $t\in [0,t_0]$ for $t_0$ small enough, 
$ d(t) =2r ( 1- g(t))^{1/2}$, and $g'(0) = -  (x_1 -x_2)\cdot (\dx_1 - \dx_2)/(2r^2)$.
After simplifications, we obtain
$ a'(t) = 2r^2 \left(  \frac{g(t)}{1 -g(t)} \right)^{1/2}  g'(t),$
and in particular $a'(0) = 0$.
This shows that
$$ \lim_{t\searrow 0} \frac{G(\bx+t\delta\bx,r) - G(\bx,r) }{t} = 0 \quad \text{  when } (x_1 -x_2)\cdot (\dx_1 - \dx_2) <  0.$$
Hence $\lim_{t\to 0} (G(\bx+t\delta\bx,r) - G(\bx,r) )/t = 0$  for all $\delta\bx\in\R^4$.
Proceeding in a similar way we can also show that
$ \lim_{t\to 0} (G(\bx,r +t) - G(\bx,r) )/t = -4\pi r$.
Thus $\nabla G(\bx,r) = (0,\dots,0,-4\pi r)^\top$ in the case $\| x_1 -x_2 \|= 2r$. (In \cite[Example~3.10]{coveringfirst} it is written $\nabla G(\bx,r) = (0,\dots,0,4\pi r)^\top$ where it should be written $\nabla G(\bx,r) = (0,\dots,0,-4\pi r)^\top$.)

It is easy to check that formula \eqref{gradG} also gives $\nabla G(\bx,r) =  (0,\dots,0,-4\pi r)^\top$ in this case.
This indicates that, for the analyzed case, \eqref{gradG} is valid even without the satisfaction of Assumption~\ref{a1}. 
However, we had to use a different technique to prove that \eqref{gradG} holds, as $G(\bx+t\delta\bx,r)$ takes different expressions depending on the sign of  $(x_1 -x_2)\cdot (\dx_1 - \dx_2)$.

We now study second-order differentiability of $G$.
Let $f(t):=G(\bx+t\delta\bx,r)$, then  if $(x_1 -x_2)\cdot (\dx_1 - \dx_2) \geq  0$ we have
$$f(t)=\vol(A) - \vol(B(x_1+t\dx_1,r)) - \vol(B(x_2+t\dx_2,r))
= \vol(A) -2\pi r^2 \text{ for all } t\in [0,t_0].$$
Thus in this case we get the right derivatives $f'(0)=\nabla_{\bx} G(\bx,r)\cdot\delta\bx=0$ and $f''(0)=\nabla^2_{\bx} G(\bx,r)\delta\bx\cdot\delta\bx = 0$.

In the case $(x_1 -x_2)\cdot (\dx_1 - \dx_2) \leq  0$ we get
$$f(t)=\vol(A) - \vol(B(x_1+t\dx_1,r)) - \vol(B(x_2+t\dx_2,r)) +a(t) \text{ for all } t\in [0,t_0],$$
and $f'(t)=\nabla_{\bx} G(\bx+t\delta\bx,r)\cdot\delta\bx=a'(t)$,  $f''(t)=\nabla^2_{\bx} G(\bx+t\delta\bx,r)\delta\bx\cdot\delta\bx = a''(t)$.
The calculation yields
$$ a''(t) = r^2 \left(  \frac{g(t)}{1 -g(t)} \right)^{-1/2}\left( \frac{g'(t)}{1 -g(t)} + \frac{g(t)g'(t)}{(1 -g(t))^2}\right)  g'(t)
+ 2r^2 \left(\frac{g(t)}{1 -g(t)} \right)^{1/2} g''(t),$$
and $g(0)=0$, $g'(0) = -  (x_1 -x_2)\cdot (\dx_1 - \dx_2)/(2r^2)$, $g''(0)= - \| \dx_1 - \dx_2 \|^2/(2r^2)$.
This shows that $|a''(t)|\to\infty$ as $t\to 0$ and consequently  $|\nabla^2_{\bx} G(\bx+t\delta\bx,r)\delta\bx\cdot\delta\bx| \to\infty$  as $t\to 0$.
This result is coherent with \eqref{d2xixiG} as we can show that $|\partial^2_{x_1x_1} G(\bx,r)u\cdot u| \to\infty$  as $\|x_1 - x_2\|\to 2r$ with $\|x_1 - x_2\|< 2r$ and $u=(1,0)^\top$.
Thus $\nabla_{\bx} G$ is not  differentiable in this geometric configuration.

Now we investigate the Gateaux semidifferentiability of $\partial_r G$ with respect to the radius.
Let us introduce the notation 
$L(\rho) := \vol \left(B(x_1,\rho)\cap B(x_2,\rho)\right)$
with $\rho>r>\|x_1-x_2\|/2.$
Then $L(\rho)$ is the area of a symmetric lens given by
$$ L(\rho) = 2\rho^2 \arccos \left( \frac{r}{\rho} \right) - 2r\left( \rho^2 - r^2 \right)^{1/2},$$
and we have
$G(\bx,\rho)=\vol(A) - 2\pi \rho^2 +L(\rho) \text{ for } 2r>\rho\geq r$.
Thus
\begin{align*}
\partial_\rho G(\bx,\rho) &= - 4\pi \rho +L'(\rho)\text{ and }
\partial^2_{\rho\rho} G(\bx,\rho) = - 4\pi +L''(\rho) \text{ for } 2r>\rho> r,
\end{align*}
and we compute
\begin{align*}
L'(\rho) &= 4\rho \arccos \left( \frac{r}{\rho} \right),\qquad
L''(\rho) = 4 \arccos \left( \frac{r}{\rho} \right) + \frac{4r}{(\rho^2 -r^2)^{1/2}} \text{ for } \rho>r.
\end{align*}
Then we observe that
$L'(r) = 0$ and $\lim_{\rho\searrow r} L''(\rho) = +\infty$.
Thus,  if $\dr>0$ then 
$$ \lim_{t\searrow 0} \frac{\partial_r G(\bx,r+t\dr) - \partial_r G(\bx,r) }{t} = +\infty,$$
and $\partial_r G$ is not Gateaux semidifferentiable in direction $(0,0,\dr)$ with $\dr>0$.

On the other hand, if $\dr<0$ and $t>0$, then we have $G(\bx,r+t\dr)=\vol(A) - 2\pi (r+t\dr)^2$, thus $\partial_r G$ is Gateaux semidifferentiable in direction $(0,0,\dr)$ and 
$$ \lim_{t\searrow 0} \frac{\partial_r G(\bx,r+t\dr) - \partial_r G(\bx,r) }{t} = -4\pi\dr \text{ for }\dr<0.$$
We conclude that $\partial_r G$ is not differentiable in this geometric configuration.
\end{example}


\begin{example} \label{examp2}
Suppose $A$ is a square, $m=1$, $\Om(\bx,r)\subset A$ and $\Om(\bx,r)$ is tangent to $\partial A$ on the right side of the square but is not tangent to the other sides. Note that Assumption \ref{a3} is not satisfied as $\Om(\bx,r)$ and $A$ are not compatible.
Then $\delta\bx = \dx_1$ and for sufficiently small $t>0$ we have
\begin{align*}
G(\bx+t\delta\bx,r) &=\vol(A) - \pi r^2 \text{ if } \dx_1 = (-1,0)^\top\\
G(\bx+t\delta\bx,r) &=\vol(A) - \pi r^2 + a(t) \text{ if } \dx_1 = (1,0)^\top.
\end{align*}
with 
$$a(t) = r^2\arccos\left(\frac{r-t\dx_1}{r}\right) -(r-t\dx_1)g(t)^{1/2} $$
and $g(t)=(r^2 -(r-t\dx_1)^2)$. We compute
$a'(t) = 2\dx_1 g(t)^{1/2}$, $a''(t) = \dx_1 g(t)^{-1/2}g'(t)$, $g(0)=0$ and $g'(0)=2\dx_1 r$.
Thus $a'(0)=0$ and $a''(t)\to +\infty$ as $t\to 0$.
It is clear that $G(\bx+t\delta\bx,r)$ does not depend on the second component of $\dx_1$ for sufficiently small $t>0$, thus we have shown that $G$ has a derivative in direction $(\delta\bx,0)$ for all $\delta\bx\in\R^2$ and that $\partial_{x_1} G(\bx,r)=0$, which gives the same value as \eqref{gradG} even though $\Om(\bx,r)$ and $A$ are not compatible in this example.

Since $G(\bx+t\delta\bx,r)$ is constant for $\delta\bx = (-1,0)^\top$, we have
$$ \lim_{t\searrow 0} \frac{\partial_{x_1} G(\bx+t\delta\bx,r) - \partial_{x_1} G(\bx,r) }{t} = (0,0)^\top \quad \text{  for } \delta\bx = (-1,0)^\top,$$
thus $\partial_{x_1} G$ is Gateaux semidifferentiable in direction $(\delta\bx,0)$ with $\delta\bx=\dx_1 = (-1,0)^\top$.
On the other hand for $\delta\bx = (1,0)^\top$ we have
$$ \lim_{t\searrow 0} \frac{( \partial_{x_1} G(\bx+t\delta\bx,r) - \partial_{x_1} G(\bx,r) ) \cdot\delta\bx}{t} =  \lim_{t\searrow 0}  a''(t) = +\infty \quad \text{  for } \delta\bx = (1,0)^\top,$$
thus $\partial_{x_1} G$ is not Gateaux semidifferentiable in direction $(\delta\bx,0)$ with $\delta\bx=\dx_1 = (1,0)^\top$.
This shows that $G$ is not twice differentiable in this geometric configuration.
\end{example}


\begin{example} \label{examp2b}
Let $A = [0,2]^2$, $m=1$, $\Om(\bx,r)=B(x_1,r)$ with $x_1 = (0,1/2)$ and $r=1/2$, then $\partial B(x_1,r)$ intersects $\partial A$ at a vertex, thus $A$ and $\Om(\bx,r)$ are not compatible and Assumption~\ref{a3} is not satisfied. 

In the case of a horizontal translation $\dx_1 = (1,0)^\top$ we symmetrize the square $A$ vertically by defining $A_s = A\cup [0,2]\times [-2,0]$. 
Defining $G_s(\bx,r) =  \vol(A_s) - \vol(A_s \cap \Om(\bx,r))$
we observe that $G_s(\bx+t\delta\bx,r) = G(\bx+t\delta\bx,r) + \vol([0,2]\times [-2,0])$ for $\delta\bx = \dx_1 = (1,0)^\top$ and sufficiently small $t$, so that 
$G_s(\bx,r)$  and $G(\bx,r)$ have the same partial derivatives in direction $\dx_1 = (1,0)^\top$ since $\vol([0,2]\times [-2,0])$ is constant.
Since $A_s$ and $\Om(\bx,r)$ are compatible, this shows that $\partial_{x_1}  G(\bx,r)$ has a derivative  at $(\bx,r)$  in direction $(\delta\bx,0)$ with  $\delta\bx=\dx_1 = (1,0)^\top$.

In the case of a vertical translation $\dx_1 = (0,\pm 1)^\top$ we symmetrize the square $A$ horizontally by defining $A_s = A\cup [-2,0]\times [0,2]$.
Then $\Om(\bx,r)$ is tangent to one side of $A_s$ and we can use the results of  Example~\ref{examp2}.
We conclude that $\partial_{x_1} G$ is Gateaux semidifferentiable in direction $(\delta\bx,0)$ with $\delta\bx=\dx_1 = (0,1)^\top$ but is not Gateaux semidifferentiable in direction $(\delta\bx,0)$ with $\delta\bx=\dx_1 = (0,-1)^\top$.
This shows that $G$ is not twice differentiable in this geometric configuration.
\end{example}

\begin{example} \label{examp3}
Let $m=3$ and $x_1,x_2,x_3$ be the vertices of an equilateral triangle. The circles $\partial B(x_1,r)$, $\partial B(x_2,r)$ and  $\partial B(x_3,r)$ intersect at a single point exactly when $\|x_1 - x_2\| = \|x_1 - x_3\| = \|x_2 - x_3\| = \sqrt{3} r$ and   Assumption \ref{a1} is not satisfied in this geometric configuration.
For $\|x_1 - x_2\|> r> r_0$ with $r_0 := \|x_1 - x_2\| /\sqrt{3}$, the intersection $B(x_1,r)\cap B(x_2,r)\cap B(x_3,r)$ forms a shape called Reuleaux triangle, whose area is denoted by $R(r)$.
Also, the intersection of two disks of identical radius creates a geometric figure called symmetric lens whose area is denoted by $L(r)$. 
Then it is easy to see that
\begin{align}
\label{reu1} G(\bx,r) &= \vol(A) - \sum_{i=1}^3\vol(B(x_i,r+t\dr)) + 3L(r) 
\quad\text{ if } r< r_0,\\
\label{reu2} G(\bx,r) &= \vol(A) - \sum_{i=1}^3\vol(B(x_i,r+t\dr)) + 3L(r) - 3R(r)
\quad\text{ if } r\geq  r_0.
\end{align}
An explicit calculation using trigonometry yields
$$R(r) = \frac{\pi-\sqrt{3}}{2} s(r)^2 \text{ with } s(r) = \left( r^2 - \frac{3 r_0^2}{4}\right)^{1/2}  -\frac{3 r_0}{2} +r, $$
and
$
R'(r) = (\pi-\sqrt{3}) s(r)s'(r)$, $R''(r) = (\pi-\sqrt{3}) (s'(r)^2 + s(r)s''(r)) 
$
with
\begin{align*}
s'(r) &= r\left( r^2 - \frac{3 r_0^2}{4}\right)^{-1/2}  +1,\quad 
s''(r) = \left( r^2 - \frac{3 r_0^2}{4}\right)^{-1/2}  - r^2\left( r^2 - \frac{3 r_0^2}{4}\right)^{-3/2}. 
\end{align*}
We also compute $s(r_0)=0$, $s'(r_0)=3$ and $s''(r_0)=-6/r_0$.
Thus $R'(r_0)=0$ and
$R''(r_0)=9(\pi-\sqrt{3})\neq 0$. 
Since $R'(r_0)=0$, (\ref{reu1},\ref{reu2}) shows that $G$ has a derivative in direction $(0,0,0,\delta r)$  at $r=r_0$ for any $\dr\in\R$, even though  Assumption~\ref{a1} is not satisfied and we have $\partial_r G(\bx,r_0)=0$.
On the other hand, since $R''(r_0)\neq 0$, $G$ is not twice differentiable at $r=r_0$ in view of (\ref{reu1},\ref{reu2}).
However, (\ref{reu1},\ref{reu2}) shows that $\partial_r G$ is  Gateaux  semidifferentiable  in both directions  $(0,0,0,1)$ and $(0,0,0,-1)$  at $r_0$ with 
\begin{align*}
\lim_{\dr\to 0^-} (\partial_r G(\bx,r_0+\dr) - \partial_r G(\bx,r_0))/\dr &= -6\pi  + 3L''(r_0),\\
\lim_{\dr\to 0^+} (\partial_r G(\bx,r_0+\dr) - \partial_r G(\bx,r_0))/\dr &= -6\pi  + 3L''(r_0) - 3R''(r_0).\\
\end{align*}
\end{example}

\section{Exact calculation of \texorpdfstring{$G$}{G} and its derivatives}\label{sec:calc_algorithms}

In this section, we consider that $A = \cup_{j=1}^{p} A_j$ and $\{A_j\}_{j=1}^{p}$ are non-overlapping convex polygons. (If not available, such decomposition can be computed in $\mathcal{O}(e_A + \bar e_A)$, where $e_A$ is the number of vertices of $A$ and $\bar e_A$ is its number of notches; see, for example, \cite{keil} and the references therein.) The key ingredient for the exact computation of $G$, $\nabla G$, and $\nabla^2 G$ as stated in Section~\ref{sec:shape_opt_covering} is to consider partitions 
\begin{equation} \label{partitiondeverdade}
A_j \cap \Om(\bx,r) = \bigcup_{i \in \mathcal{K}_{A_j}} S_{ij}, \quad j=1,\dots,p,
\end{equation}
where $\mathcal{K}_{A_j} \subseteq \{ i \in \{1,\dots,m\} \;|\; B(x_i,r) \cap A_j \neq \emptyset\}$ 
for $j=1,\dots,p$ and each $S_{ij}$ is such that $\partial S_{ij}$ is a simple and convex curve given by the union of segments and arcs of the circle $\partial B(x_i,r)$. 
It is worth noticing that, since $A_1, A_2,\dots,A_p$ are disjoint, then $\{S_{ij}\}_{(i,j) \in \mathcal{K}}$ with $\mathcal{K} = \{ (i,j) \;|\; j \in \{1,\dots,p\} \mbox{ and } i \in \mathcal{K}_{A_j}\}$ is a partition of $A \cap \Om(\bx,r)$, i.e.,
\begin{equation} \label{xicara}
A \cap \Om(\bx,r) = \bigcup_{(i,j) \in \mathcal{K}} S_{ij},
\end{equation}
see Figure~\ref{fig:Sij}. Note that~8 out of the~10 balls intersect either $A_1$ or $A_2$ in Figure~\ref{fig:Sij}. Let us number the balls intersecting only $A_1$ from~1 to~5 and the balls intersecting only~$A_2$ from 8 to 10. Thus, balls 1 to~5 contribute to~\eqref{partitiondeverdade} with $S_{11}, S_{21}, \dots, S_{51}$, i.e., they contribute to the partition of $A_1$ only; while balls 8, 9, and~10 contribute with $S_{82}$, $S_{92}$, and $S_{10,2}$, i.e., they contribute to the partition of $A_2$ only. Balls 6~and~7 intersect both~$A_1$ and $A_2$ and contribute with $S_{61}$ and $S_{71}$ to the partition of $A_1$ and with $S_{62}$ and $S_{72}$ to the partition of $A_2$. Therefore we have
$\mathcal{K}_{A_1}=\{1,2,3,4,5,6,7\}$, $\mathcal{K}_{A_2}=\{6,7,8,9,10\}$, and $\mathcal{K}=\{ (1,1), (2,1), (3,1), (4,1), (5,1),  (6,1), (6,2), (7,1), (7,2), (8,2), (9,2), (10,2)\}$. In addition, for further use, we define $\mathcal{K}_{B_1}=\mathcal{K}_{B_2}=\mathcal{K}_{B_3}=\mathcal{K}_{B_4}=\mathcal{K}_{B_5}=\{1\}$, $\mathcal{K}_{B_6}=\mathcal{K}_{B_7}=\{1,2\}$, $\mathcal{K}_{B_8}=\mathcal{K}_{B_9}=\mathcal{K}_{B_{10}}=\{2\}$.

\begin{figure}[ht!]
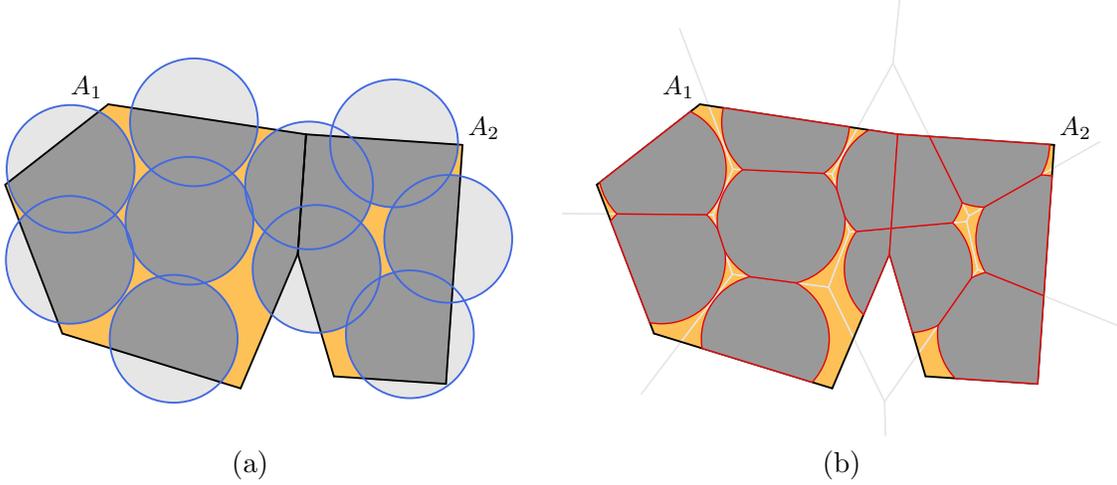

\begin{center}
\begin{tabular}{cc}
\includegraphics{convpols_balls.mps} &
\includegraphics{curv_pols.mps} \\
(a) & (b)
\end{tabular}
\end{center}
\caption{(a) represents a region $A$ to be covered given by $A=\cup_{j=1}^p A_j$ with $p=2$ and an arbitrary configuration of balls $\Om(\bx,r) = \cup_{i=1}^m B(x_i,r)$ with $m=10$. (b) represents the partitions of $A_1 \cap \Om(\bx,r)$ and $A_2 \cap \Om(\bx,r)$ defined in~\eqref{partitiondeverdade} that, together, as expressed in~\eqref{xicara}, represent a partition of $A \cap \Om(\bx,r)$. In (b), the Voronoi diagram that allows the partitions to be computed is depicted.} 
\label{fig:Sij}
\end{figure}

The computation of the partitions in~\eqref{partitiondeverdade} is based on Voronoi diagrams. For a given $(\bx,r)$, we first compute the Voronoi diagram with cells $\{V_i\}_{i=1}^m$ associated with the balls centers $x_1,\dots,x_m$. Each cell $V_i$ is a (bounded or unbounded) polyhedron given by the points $y \in \R^2$ such that $\|y-x_i\| = \min_{\{\ell=1,\dots,m\}} \{ \| y - x_\ell\| \}$. Then, for each $j=1,\dots,p$ and each $i=1,\dots,m$, we compute the convex polygons $W_{ij}=A_j \cap V_i$ and, in the sequence, $S_{ij} = W_{ij} \cap B(x_i,r)$. (Note that, by construction, $W_{ij} \cap B(x_i,r) = W_{ij} \cap \Om(\bx,r)$; and so~\eqref{partitiondeverdade} and, in consequence, \eqref{xicara} hold.) In the construction process, we obtain the sets $\mathcal{K}_{A_j} = \{ i \in \{1,\dots,m\} \;|\; S_{ij} \neq \emptyset \}$, 
$\mathcal{K}_{B_i} = \{ j \in \{1,\dots,p\} \;|\; S_{ij} \neq \emptyset \}$, and $\mathcal{K}$ such that $(i,j) \in \mathcal{K}$ if and only if $S_{ij} \neq \emptyset$. Let $\mathcal{V}(S_{ij})$ be the set of vertices of $S_{ij}$, $\mathcal{A}(S_{ij}) = \partial S_{ij} \cap \partial B(x_i,r)$ the union of the arcs in $\partial S_{ij}$, and $\mathcal{E}(S_{ij}) = \partial S_{ij} \setminus \mathcal{A}(S_{ij})$ the union of the edges in $\partial S_{ij}$. Moreover, we associate with $\mathcal{E}(S_{ij})$ and $\mathcal{A}(S_{ij})$ the corresponding sets of maximal arcs $\mathbb{A}(S_{ij})$ and edges $\mathbb{E}(S_{ij})$. Strictly speaking, these are sets of pairs of points representing arcs and edges, respectively. Each edge is represented by a pair $[v,w]$ of vertices in counter-clockwise order and each arc is represented by a pair $(v,w)$ of vertices, in counter-clockwise order, that unequivocally determines two angles. For each vertex $z \in \mathcal{V}(S_{ij})$, we save whether $z \in \partial A$ or not. If $z \in \partial A$, then we save, whenever it exists, the unitary (Euclidean) norm outward normal vector to $\partial A$ at $z$, named $\nu_A(z)$.  Additionally, for each vertex~$z \in {\cal V}(S_{ij})$, we save the set of indices~$L(z) \subseteq \{1,\dots, m\} \setminus \{i\}$ such that $z \in \partial B(x_\ell,r)$ for all $\ell \in L(z)$. 

Each set $\mathcal{A}_i = \partial B(x_i,r) \cap \partial \Om(\bx,r) \cap A$ for $i=1,\dots,m$, defined in~\eqref{partition}, corresponds to the union of the arcs in $\partial S_{ij}$ for all $j \in \mathcal{K}_{B_i}$, i.e., it holds
\begin{equation} \label{labendita}
\arc_i = \bigcup_{j \in \mathcal{K}_{B_i}} \arc(S_{ij})
\end{equation}
for $i=1,\dots,m$. It is worth noticing~\eqref{labendita} does not mean that every arc in $\pairsvw_i$ belongs to $\pairsvw(S_{ij})$ for some $j \in \mathcal{K}_{B_i}$ nor that $|\pairsvw_i|=\sum_{j \in \mathcal{K}_{B_i}} |\pairsvw(S_{ij})|$. Indeed, if $z$ is an extremity of an arc in $\pairsvw(S_{ij})$ then either $z \in \partial B(x_\ell,r)$ for some $\ell\neq i$ or $z \in \partial A_j$. In the case $z \in \partial A_j$, it may happen that $z \notin \partial A$, and consequently $z$ is not an extremity of an arc in $\pairsvw_i$. To construct $\pairsvw_i$, consecutive arcs (arcs with a extreme in common) in  $\cup_{j \in \mathcal{K}_{B_i}} \pairsvw(S_{ij})$ must be merged into a single arc. So, what holds is that each arc in $\pairsvw_i$ belongs to $\pairsvw(S_{ij})$ for some $j \in \mathcal{K}_{B_i}$ or  is the union of two or more consecutive arcs in $\cup_{j \in \mathcal{K}_{B_i}} \pairsvw(S_{ij})$. Thus $|\pairsvw_i| \leq \sum_{j \in \mathcal{K}_{B_i}} |\pairsvw(S_{ij})|$. The particular case $\arc_i = \partial B(x_i,r)$ is considered separately; in this case, we set $\pairsvw_i=\emptyset$ and $\mathrm{Circle}(\pairsvw_i)$ equal to true.


In a similar way, we also define
\begin{equation} \label{labendita2}
\mathcal{E}_i = \bigcup_{j \in \mathcal{K}_{B_i}} \mathcal{E}(S_{ij}),
\end{equation}
and the associate set $\mathbb{E}_i$ of pairs $[v,w]$ representing edges, for $i=1,\dots,m$. These sets of edges play a role in the computation of $G$ only. Thus, while the same principle of merging consecutive edges could be applied, it has no practical relevance because one way or the other, the same result is obtained.

A second ingredient for the exact computation of $G$ and its derivatives are the parameterizations
\begin{equation} \label{edgespar}
t \mapsto \left( \begin{array}{c} x_{\cal E}(t) \\ y_{\cal E}(t) \end{array} \right) = v + t (w-v), \quad t \in [0,1],
\end{equation}
of each edge represented by $[v,w] \in \cup_{i=1}^m \mathbb{E}_i$; and the parameterizations
\begin{equation} \label{arcspar}
\theta \mapsto \left( \begin{array}{c} x_{\cal A}(\theta) \\ y_{\cal A}(\theta) \end{array} \right) = x_i + r \left( \begin{array}{c} \cos \theta \\ \sin \theta \end{array} \right), \quad \theta \in [\theta_v,\theta_w],
\end{equation}
of each arc represented by $(v,w) \in \pairsvw_i$ for $i=1,\dots,m$, where $\theta_v$ and $\theta_w$ are the angular coordinates of $v-x_i$ and $w-x_i$, respectively.

We are now ready to compute $G$ and its derivatives. By~\eqref{aquiG},
\begin{equation} \label{alg1a}
G(\bx,r) = \vol(A) - \vol(A \cap \Om(\bx,r)) = 
\vol(A) - \sum_{(i,j) \in \mathcal{K}} \vol(S_{ij}).
\end{equation}
By Green's Theorem,
\begin{equation} \label{alg1b}
\vol(S_{ij}) 
= \int_{S_{ij}} dxdy
= \int_{\partial S_{ij}} x \, dy =
\sum_{[v,w] \in \mathbb{E}(S_{ij})} \int_0^1 x_{\mathcal{E}}(t) \, dy_{\mathcal{E}}(t) + 
\sum_{(v,w) \in \pairsvw(S_{ij})} \int_{\theta_v}^{\theta_w} x_{\mathcal{A}}(\theta) \, dy_{\mathcal{A}}(\theta)
\end{equation}
for all $(i,j) \in \mathcal{K}$; while, by~\eqref{edgespar},
\begin{equation} \label{alg1c}
\int_0^1 x_{\mathcal{E}}(t) \, dy_{\mathcal{E}}(t) =
\frac{(v_1 + w_1)(w_2 - v_2)}{2}
\end{equation}
for all $[v,w] \in \mathbb{E}(S_{ij})$ and all $(i,j) \in \mathcal{K}$, and, by~\eqref{arcspar},
\begin{equation} \label{alg1d}
\begin{array}{rcl}
\displaystyle \int_{\theta_v}^{\theta_w} x_{\mathcal{A}}(\theta) \, dy_{\mathcal{A}}(\theta) 
&=& \displaystyle (x_{i})_1 \, r \, (\sin{\theta_w} - \sin{\theta_v})  \\[4mm]
&+& \displaystyle \frac{r^2}{2} \left( \theta_w - \theta_v +\sin{\theta_w} \cos{\theta_w} - \sin{\theta_v} \cos{\theta_v} \right)
\end{array}
\end{equation}
for all $(v,w) \in \pairsvw(S_{ij})$ and all $(i,j) \in \mathcal{K}$. The computation of~$G$ as defined in~\eqref{aquiG} using~(\ref{alg1a}--\ref{alg1d}) is summarized in Algorithm~\ref{G}.

\begin{algorithm}[ht!]
\caption[G]{\textsc{Computes $G(\bx,r)$.}} \label{G}
\KwInput{$\vol(A)$, $(\bx,r)$, and sets $\{\mathbb{E}_i,\pairsvw_i\}_{i=1}^m$.}
\KwOutput{$G(\bx,r)$.}
$\gamma \gets 0$ \\
\For{$i=1,\dots,m$}{
\If{$\mathrm{Circle}(\pairsvw_i)$}{
    $\gamma \gets \gamma + \pi r^2$
    }
\Else{
    \ForEach{$[v,w] \in \mathbb{E}_i$}{
        $\gamma \gets \gamma+\frac{1}{2}(v_1 + w_1)(w_2 - v_2)$
    }
    \ForEach{$(v,w) \in \pairsvw_i$}{
        $\gamma \gets \gamma + 
        {(x_i)}_1 r (\sin \theta_w - \sin \theta_v ) + \frac{r^2}{2} (\theta_w - \theta_v + \sin \theta_w \cos \theta_w - \sin \theta_v \cos \theta_v )$
        }
    }
}
\Return $G = \vol(A) - \gamma$
\end{algorithm}

For computing~$\nabla G = (\partial_{x_1} G(\bx,r),\dots,\partial_{x_m} G(\bx,r),\partial_{r} G(\bx,r))^\top$, by~\eqref{gradG} and \eqref{partition}, we have that
\begin{equation} \label{alg2a}
\begin{array}{rcl}
\partial_{x_i} G(\bx,r) &=& \displaystyle
- \int_{\mathcal{A}_i} \nu_i(z) \, dz 
= \displaystyle - \sum_{(v,w) \in \pairsvw_i} \int_{\theta_v}^{\theta_w} r \, (\cos \theta, \sin \theta)^\top \, d \theta \\[4mm]
&=& \displaystyle \sum_{(v,w) \in \pairsvw_i} r \, (\sin \theta_v - \sin \theta_w, \cos \theta_w - \cos \theta_v)^\top,
\end{array}
\end{equation}
for $i=1,\dots,m$, and, by~\eqref{gradG} and \eqref{unionAi}, we have that
\begin{equation} \label{alg2b}
\partial_{r} G(\bx,r) = 
\displaystyle - \int_{\cup_{i=1}^m \arc_i} dz = 
\displaystyle - \sum_{(v,w) \in \cup_{i=1}^m \pairsvw_i} \int_{\theta_v}^{\theta_w} r \, d \theta = 
\displaystyle - \sum_{(v,w) \in \cup_{i=1}^m \pairsvw_i} r \, (\theta_w - \theta_v).
\end{equation}
The computation of~$\nabla G$ as defined in~\eqref{gradG} using~(\ref{alg2a},\ref{alg2b}) is summarized in Algorithm~\ref{grad_G}.

\begin{algorithm}[ht!]
\caption[$\nabla G$]{\textsc{Computes $\nabla G(\bx,r)$.}} \label{grad_G}
\KwInput{$(\bx,r)$, and sets $\{\pairsvw_i\}_{i=1}^m$.}
\KwOutput{$\nabla G(\bx,r)$.}
$g_r\gets0$ and $g_{x_i}\gets(0, 0)^\top$ for $i=1,\dots,m$.\\
\For{$i=1,\dots,m$}{
\If{$\mathrm{Circle}(\pairsvw_i)$}{
    $g_r \gets g_r - 2 \pi r$
    }
\Else{
    \ForEach{$(v,w) \in \pairsvw_i$}{
        $g_r \gets g_r - r(\theta_w - \theta_v)$ \\
        $g_{x_i} \gets g_{x_i} + r (\sin{\theta_v} - \sin{\theta_w}, \cos{\theta_w} - \cos{\theta_v})^\top$
        }
    }
}
\textbf{return} $\nabla G(\bx,r) = (g_r, g_{x_1}^\top, \dots, g_{x_m}^\top)^\top$
\end{algorithm}

For computing~$\nabla^2 G$, we use that, in~\eqref{d2Grr},
\begin{equation} \label{alg3a}
- \frac{\mathrm{Per}(\partial\Om(\bx,r)\cap A)}{r} = 
\sum_{(v,w) \in \cup_{i=1}^m \pairsvw_i} (\theta_v - \theta_w),
\end{equation}
in~\eqref{d2xixiG}, 
\begin{align} \label{alg3b}
& \frac{1}{r} \displaystyle \int_{\arc_i} - \nu_i(z) \otimes \nu_i(z) + \tau_i(z) \otimes \tau_i(z) \, dz \nonumber \\ 
&= \frac{1}{r} \sum_{(v,w) \in \pairsvw_i} r \int_{\theta_v}^{\theta_w} - \left(
\begin{array}{cc}
(\cos \theta)^2 & \sin \theta \cos \theta \\ 
\sin \theta \cos \theta & 
(\sin \theta)^2
\end{array}
\right) + \left(
\begin{array}{cc}
(\sin \theta)^2 & -\sin \theta \cos \theta \\ 
-\sin \theta \cos \theta & 
(\cos \theta)^2
\end{array}
\right) \, d \theta \\
&= \sum_{(v,w) \in \pairsvw_i} 
\left(
\begin{array}{cc}
\sin(\theta_v - \theta_w)\cos(\theta_v + \theta_w) & (\cos\theta_w)^2 - (\cos\theta_v)^2 \\ 
(\cos\theta_w)^2 - (\cos\theta_v)^2 & 
\sin(\theta_w - \theta_v)\cos(\theta_v + \theta_w)
\end{array}
\right),  \nonumber 
\end{align}
and, in~\eqref{eq:781},
\begin{equation} \label{alg3c}
- \frac{1}{r} \int_{\arc_i} \nu_i(z) \, dz\\[4mm]
= \sum_{(v,w) \in \pairsvw_i} (\sin \theta_v - \sin \theta_w, \cos \theta_w - \cos \theta_v)^\top.
\end{equation}
Recall that in~(\ref{d2Grr},\ref{d2xixiG},\ref{eq:781}), for $z \in \partial B(x_i,r)$, $\nu_{-i}(z)$ represents the unitary-norm outwards normal vector to the set intersecting $\partial B(x_i,r)$ at $z$. If this set is $\partial A$, then $\nu_{-i}(z)=\nu_A(z)$. If this set is $\partial B(x_\ell,r)$ for some $\ell \in L(z)$, then $\nu_{-i}(z)=\nu_\ell(z)=(\cos \vartheta_z, \sin \vartheta_z)^\top$, where $\vartheta_z$ is the angular coordinate of $z-x_\ell$.
With these definitions and substituting~(\ref{alg3a},\ref{alg3b},\ref{alg3c}) in~(\ref{d2Grr},\ref{d2xixiG},\ref{d2Gxixj},\ref{eq:781}), we arrive at Algorithm~\ref{hessian_G}.  

\begin{algorithm}[ht!]
\caption[hessian]{\textsc{Computes $\nabla^2 G(\bx,r)$.}} \label{hessian_G}
\KwInput{$(\bx,r)$ and sets $\{\pairsvw_i\}_{i=1}^m$.}
\KwOutput{The lower triangle of $H = \nabla^2 G(\bx,r) \in \mathbb{R}^{2m+1,2m+1}$.}
$H \gets 0$.\\
\For{$i=1,\dots,m$}{
\If{$\mathrm{Circle}(\pairsvw_i)$}{
    $h_{2m+1, 2m+1} \gets h_{2m+1, 2m+1} - 2\pi$
    }
\Else{
\ForEach{$(v,w) \in \pairsvw_i$}{
\vspace{1mm}
let $a \odot b$ mean $a \gets a + b$\\
$\scalebox{0.90}{$
\begin{array}{rcl}
\left(
\begin{array}{cc}
h_{2i-1, 2i-1} & \\ 
h_{2i,   2i-1} & h_{2i,   2i}
\end{array}
\right)
&\odot&
\left(
\begin{array}{cc}
\sin(\theta_v - \theta_w)\cos(\theta_v + \theta_w) & \\ 
(\cos\theta_w)^2 - (\cos\theta_v)^2 & 
\sin(\theta_w - \theta_v)\cos(\theta_v + \theta_w)
\end{array}
\right)
\\[4mm]
(h_{2m+1, 2i-1}, h_{2m+1, 2i})
&\odot&
(\sin \theta_v - \sin \theta_w, \cos \theta_w - \cos \theta_v)
\\[2mm]
h_{2m+1, 2m+1}
&\odot&
\theta_v - \theta_w
\end{array}
$}$\\
\For{$z \in \{v,w\}$}{
\vspace{1mm}
\textbf{if} $z=v$ \textbf{then} let $a \odot b$ mean $a \gets a - b$ \textbf{else} let $a \odot b$ mean $a \gets a + b$\\[1mm]
\If{$z \in \partial A$}{
\vspace{1mm}
$\scalebox{0.90}{$
\begin{array}{l}
\alpha \gets \left(-\nu_A(z) \cdot (\cos \theta_z, \sin \theta_z)^\top \right) / \left( \nu_A(z) \cdot (-\sin \theta_z, \cos \theta_z)^\top \right) \\[2mm]
\begin{array}{rcl}
\left(
\begin{array}{cc}
h_{2i-1, 2i-1} & \\
h_{2i, 2i-1} & h_{2i, 2i}
\end{array}
\right)
&\odot&
\alpha
\left(
\begin{array}{cc}
(\cos\theta_z)^2 & \\
\sin \theta_z \cos \theta_z & (\sin\theta_z)^2
\end{array}
\right)
\\[4mm]
(h_{2m+1, 2i-1}, h_{2m+1, 2i}) &\odot& \alpha(\cos \theta_z, \sin \theta_z)
\\[2mm]
h_{2m+1, 2m+1} &\odot& \alpha 
\end{array}
\end{array}
$}$
}
\ForEach{$\ell(z) \in L(z)$}{
\vspace{1mm}
$\scalebox{0.90}{$
\begin{array}{rcl}
\left(
\begin{array}{cc}
h_{2i-1, 2i-1} & \\ 
h_{2i,   2i-1} & h_{2i,   2i}
\end{array}
\right)
&\odot&
\left(
\begin{array}{cc}
\cotan(\vartheta_z - \theta_z) (\cos\theta_z)^2 & \\ 
\cotan(\vartheta_z - \theta_z) \sin \theta_z \cos \theta_z & 
\cotan(\vartheta_z - \theta_z) (\sin\theta_z)^2
\end{array}
\right)
\\[4mm]
h_{2m+1, 2i-1}
&\odot&
\cotan(\vartheta_z - \theta_z) \cos \theta_z 
- \cos \theta_z / \sin(\vartheta_z - \theta_z)
\\[2mm]
h_{2m+1, 2i}
&\odot&
\cotan(\vartheta_z - \theta_z) \sin \theta_z 
- \sin \theta_z / \sin(\vartheta_z - \theta_z)
\\[2mm]
h_{2m+1, 2m+1}
&\odot&
(\cos(\vartheta_z - \theta_z)-1) / \sin(\vartheta_z - \theta_z) 
\end{array}
$}$\\
\If{${\ell(z)} > i$}{
$\scalebox{0.90}{$
\begin{array}{rcl}
\left(
\begin{array}{cc}
h_{2\ell(z)-1, 2i-1} & h_{2\ell(z)-1, 2i}\\ 
h_{2\ell(z),   2i-1} & h_{2\ell(z),   2i}
\end{array}
\right)
&\odot&
-(\sin(\vartheta_z - \theta_z))^{-1}
\left(
\begin{array}{cc}
\cos \theta_z \cos \vartheta_z & 
\sin \theta_z \cos \vartheta_z \\ 
\cos \theta_z \sin \vartheta_z & 
\sin \theta_z \sin \vartheta_z
\end{array}
\right)
\end{array}
$}$
}
}
}
}
}
}
\Return $H$
\end{algorithm}

Algorithms~\ref{G}, \ref{grad_G}, and \ref{hessian_G} depend on the computation of sets $\mathbb{E}_i$ and $\mathbb{A}_i$ for $i=1,\dots,m$. Computing these sets requires (a) to compute the Voronoi diagram with cells $\{V_i\}_{i=1}^m$ associated with the balls' centers $x_1,\dots,x_m$ and (b) for each $i \in \{1,\dots,m\}$ and $j \in \{1,\dots,p\}$, to compute $W_{ij} = V_i \cap A_j$ and $S_{ij} = W_{ij} \cap B(x_i,r)$. Computing the Voronoi diagram, using for example Fortune's algorithm~\cite{fortune}, has known time complexity $\mathcal{O}(m\log{m})$~\cite[Lem.~7.9, p.158]{deberg}. Since the intersection between a two-dimensional polyhedron defined by~$a$ half-planes and a convex polygon with~$b$ sides can be computed in $\mathcal{O}(ab)$~\cite{horowitz1992}, all $W_{ij}$ can be computed in
\begin{equation} \label{primc}
\mathcal{O}(\sum_{i=1}^m{\sum_{j=1}^p}{e_{V_i} e_{A_j}}),
\end{equation}
where $e_{V_i}$ is the number of half-planes that define $V_i$, for $i=1,\dots,m$, and $e_{A_j}$ is the number of sides of each $A_j$, for $j=1,\dots,p$. However, it is also known~\cite[Thm.7.3, p.150]{deberg} that a Voronoi diagram generated by $m\geq3$ points has at most $3m-6$ edges; and since each edge is part of exactly two cells, we have that $\sum_{i=1}^me_{V_i}=\mathcal{O}(m)$. Thus, \eqref{primc} reduces to $\mathcal{O}(m \sum_{j=1}^p e_{A_j})$. By construction, it also holds that $\sum_{i=1}^m{|\mathbb{E}_i|}$ is $\mathcal{O}(m \sum_{j=1}^p e_{A_j})$. Finally, a simple inspection of Algorithm~\ref{inter_pol_circle}, used to compute $S_{ij} = W_{ij} \cap B(x_i,r)$, shows that the computational effort required to compute all $S_{ij}$, as well as $\sum_{i=1}^m{|\mathbb{A}_i|}$, are both $\mathcal{O}(m \sum_{j=1}^p e_{A_j})$. This implies that the worst-case time complexity of Algorithms~\ref{G}, \ref{grad_G}, and \ref{hessian_G} is $\mathcal{O}(m\log{m} + m \sum_{j=1}^p e_{A_j})$.

\section{Numerical experiments} \label{sec:num_exp}

In this section, we aim to illustrate the capabilities and limitations of the proposed approach. We implemented Algorithms~\ref{G}, \ref{grad_G}, and \ref{hessian_G} in Fortran~90. Given the balls' centers $\{x_i\}_{i=1}^m$, the Voronoi diagram is computed with subroutine \textsc{Dtris2} from Geompack~\cite{geompack} (available at \url{https://people.math.sc.edu/Burkardt/f_src/geompack2/geompack2.html}). In fact, \textsc{Dtris2} provides a Delaunay triangulation from which the Voronoi diagram is extracted. The intersection $W_{ij}$ of each Voronoi cell $V_i$ (that is a bounded or unbounded polyhedron) and each convex polygon~$A_j$ is computed with the Sutherland-Hodgman algorithm~\cite{sutherland1974}. For each convex polygon~$W_{ij}$, the intersection $S_{ij}$ with the ball $B(x_i,r)$ is computed with an adaptation of a single iteration of the Sutherland-Hodgman algorithm, detailed as Algorithm~\ref{inter_pol_circle} in Appendix~\ref{appen:alg4}.

Problem~\eqref{prob1} is a nonlinear programming problem of the form
\begin{equation} \label{prob2}
\mbox{Minimize } f(\bx,r) := r \mbox{ subject to } G(\bx,r)=0 \mbox{ and } r \geq 0
\end{equation} 
that can be tackled with an Augmented Lagrangian (AL) approach~\cite{bmbook}. In the numerical experiments, we considered the AL method Algencan~\cite{abmstango,bmbook,bmcomper}. Algencan~4.0, implemented in Fortran~90 and available at~\url{http://www.ime.usp.br/~egbirgin/tango/}, was considered. Algencan is an AL method with safeguards that, at each iteration, solves a bound-constrained subproblem. Since, in the present work, second-order derivatives are available, subproblems are solved with an active-set Newton's method; see~\cite{bmgencan} and~\cite[Ch.9]{bmbook} for details. When Algencan is applied to problem~(\ref{prob2}), on success, it finds $(\bx^\star,r^\star,\lambda^\star)$ with $r^\star > 0$ satisfying
\begin{equation} \label{kkt}
\| \nabla f(\bx^\star,r^\star) + \lambda^\star \nabla G(\bx^\star,r^\star) \|_{\infty} 
\leq \varepsilon_{\mathrm{opt}} \mbox{ and }
\| G(\bx^\star,r^\star) \|_{\infty} \leq \varepsilon_{\mathrm{feas}},
\end{equation}
where $\varepsilon_{\mathrm{feas}} > 0$ and $\varepsilon_{\mathrm{opt}} > 0$ are given feasibility and optimality tolerances, respectively; i.e., it finds a point that approximately satisfies KKT conditions for problem~(\ref{prob2}). Following~\cite{coveringfirst}, in order to enhance the probability of finding an approximation to a global minimizer, a simple multistart strategy with random initial guesses is employed; see~\cite[\S5]{coveringfirst} for details. In the numerical experiments, we considered $\varepsilon_{\mathrm{feas}} = \varepsilon_{\mathrm{opt}} = 10^{-8}$. 

In the numerical experiments, we considered (i) a non-convex polygon with holes already considered in~\cite{stoyan}, (ii) a sketch of a map of America available from~\cite[\S13.2]{bmbook} and already considered in~\cite{coveringfirst}, (iii) an eight-pointed star, (iv) iteration two of the Minkowski island fractal, and (v) iteration three of the Ces\`aro fractal; see Figures~\ref{fig:stoyan}a--\ref{fig:cesaro}a. In Figures~\ref{fig:stoyan}b--\ref{fig:cesaro}b, the way in which the problems were partitioned into convex polygons is made explicit. Appendix~\ref{appen:probs} presents an explicit description of each problem by exhibiting the vertices of each convex polygon that compose the problem.

Fortran source code of Algorithms~\ref{G}, \ref{grad_G}, \ref{hessian_G}, and~\ref{inter_pol_circle}, the source code of the considered problems, as well as the source code necessary to reproduce all numerical experiments, is available at \url{http://www.ime.usp.br/~egbirgin/}. All tests were conducted on a computer with an AMD Opteron 6376 processor and 256GB 1866 MHz DDR3 of RAM memory, running Debian GNU/Linux (version 9.13--stretch). Code was compiled by the GFortran compiler of GCC (version 6.3.0) with the -O3 optimization directive enabled.

In the experiments, we covered the five considered regions with $m \in\{10, 20, \dots, 100 \}$ balls. For each problem and each considered value of $m$, the multistart strategy makes $10{,}000$ attempts, i.e.\ $10{,}000$ different random initial guesses are considered. Table~\ref{tab:results} and Figures~\ref{fig:stoyan}--\ref{fig:cesaro} show the results. In Table~\ref{tab:results}, $r^*$ represents the smallest obtained radius, $G(\bx^*,r^*)$ corresponds to the value of $G$ at the obtained solution, and ``trial'' is the ordinal of the initial guess that yields the smallest radius. In addition, some performance metrics are also displayed in the remaining columns of the table. ``outit'' and ``innit'' correspond to the so called outer and inner iterations of the AL method, respectively, ``Alg.1'', ``Alg.2'', and ``Alg.3'' correspond to the number of calls to Algorithms~\ref{G}, \ref{grad_G}, and~\ref{hessian_G}, respectively, i.e.\ to the number of evaluations of~$G$, $\nabla G$, and $\nabla^2 G$ that were required in the optimization process, and ``CPU Time'' corresponds to the elapsed CPU time in seconds. All these performance metrics correspond to the trial that leads to the smallest radius for a given problem and a given number of balls~$m$. Thus, the whole process took approximately $10{,}000$ times this effort. Clearly, the overall cost can be reduced by reducing the number of trials. Figure~\ref{bestr} illustrates, for the ``non-convex with holes problem'' with $m \in \{10, 20, \dots, 100\}$, the best obtained radius as a function of the number of trials. The picture shows that, for all values of $m$, good quality local minimizers are found with less than 100 trials and that in the remaining 99\% additional trials only marginal improvements are obtained.

\begin{figure}[ht!]
\begin{center}
\includegraphics{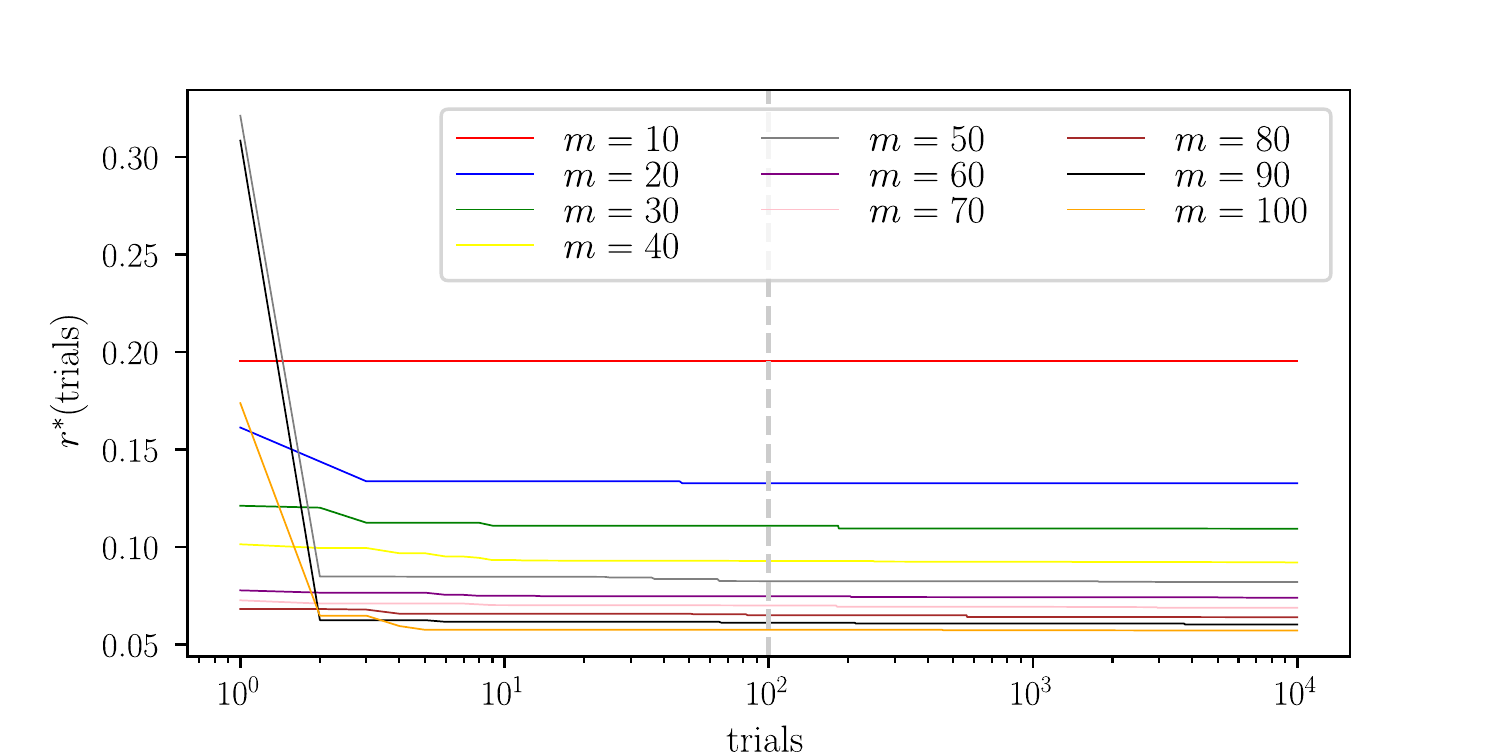}
\end{center}
\caption{Best radius $r^*$ for $m \in \{10, 20, \dots, 100\}$ as a function of the number of trials in the multistart globalization strategy.}
\label{bestr}
\end{figure}

As a whole, numerical experiments show that, by using second-order information, the AL method is able to find high-precision local solutions efficiently. It is worth noticing that, as shown in column $G(\bx^*,r^*)$ of Table~\ref{tab:results}, using $\varepsilon_{\mathrm{feas}}=10^{-8}$ means that the area of the region~$A$ to be covered and the covered region $A \cap \Om(\bx^*,r^*)$ coincide in eight significant digits. Since, in the considered problems, the region with largest area has area equal to~16 (see the description of the problem in Appendix~\ref{appen:probs}), this means that reported solutions cover more than 99.999999\% of the region. This precision is in contrast with the relatively low-quality solutions obtained with the approximate procedure considered in~\cite{coveringfirst}. A scaled versions of the non-convex region with holes considered in the present work was also considered in~\cite{stoyan}, where radius $r^* = 16.6176655 / 150 \approx 0.110784446$ and $r^*= 14.07100757 / 150 \approx 0.09380671713$ for the cases with $m=30$ and $m=40$ were reported. A direct comparison is not possible, because the balls' centers and the covering's precision of these solutions was not reported in~\cite{stoyan}. Anyway, smaller radii were found for these two cases in the present work, namely, $r^* = 0.10944963099046681$ and $r^*=0.092110416532448419$, respectively. The region that represents a sketch of the map of America was also considered in~\cite{coveringfirst}. Solutions presented in~\cite{coveringfirst} are not comparable to the ones presented here. The latter are much more precise and can be found with much less effort. 

To put the practical performance of the current approach in perspective in relation to the practical performance of the method implemented in~\cite{coveringfirst}, consider the trivial configuration depicted in Figure~\ref{fig:trivial}. The configuration shows a square of side three with the bottom-left corner at the origin and two unitary-radius balls with centers $x_1=(0,3)^\top$ and $x_2=(1.2, 1.7)^\top$. The covered area can be computed analytically and it is given by $\mathrm{Vol}(A \cap \Omega(\bx,r)) = 5\pi/4 - 2\arccos(d/2) + d \sqrt{1 - (d/2)^2} \approx 3.781718647855564$, where $d=\|x_1-x_2\|$. Algorithm~\ref{G} computes this quantity up to the machine precision in $10^{-6}$ seconds of CPU time. Algorithm~1 from~\cite{coveringfirst}, devised to cover more general non-polygonal regions, approximates a covered area with precision $O(h)$ at cost $O(h^2)$ by partitioning a region $D$ that contains $A$ in small squares of side~$h$, where $h>0$ is a given parameter. In this specific trivial example, it takes 271.92 seconds of CPU time to compute the covered area with half of the machine precision using $h=10^{-5}$. (With $h=10^{-3}$ and $h=10^{-4}$, four and six correct decimal digits are obtained, by consuming 0.024 and 2.4 seconds of CPU time, respectively. Moreover, the cost is proportional to the area of $D$, which is as small as possible since we considered $D=A$.) So, in this trivial example, we showed that the approach proposed in the present work computes the covered area with twice the number of correct digits with a computational cost that is eight orders of magnitude smaller (i.e.,\ a hundred million times faster) than the cost of the approach proposed in~\cite{coveringfirst}, thus dramatically improving the computational efficiency.
This, together with a similar state of things with respect to the computation of~$\nabla G$, plus the computation of $\nabla^2 G$ that is absent in~\cite{coveringfirst}, justify the much higher quality of the obtained results.

\begin{figure}[ht!]
\centering
\includegraphics{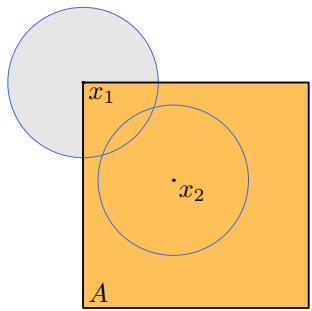}
\caption{A trivial example that illustrates the comparison between the exact computation of~$G$ introduced in the current work and the approximate scheme considered in~\cite{coveringfirst}.}
\label{fig:trivial}
\end{figure}



\begin{table}[ht!]
\begin{center}
\resizebox{0.9\textwidth}{!}{
\begin{tabular}{|c|c|c|c|r|r|r|r|r|r|c|}
\hline
& $m$ & $r^*$ & $G(\bx^*,r^*)$ &
\multicolumn{1}{c|}{trial} & \multicolumn{1}{c|}{outit} & \multicolumn{1}{c|}{innit} & 
\multicolumn{1}{c|}{Alg.1} & \multicolumn{1}{c|}{Alg.2} & \multicolumn{1}{c|}{Alg.3} &
\multicolumn{1}{c|}{CPU Time} \\
\hline
\hline
\multirow{10}{*}{\rotatebox{90}{Non-convex with holes}}
&10	 & 	1.9546630973359513e$-$01 &  5.2e$-$09	 & 	7078	 & 	23	 & 	154	 & 	538	 & 	388	 & 	384	 & 	0.33 \\
&20	 & 	1.3277721146997093e$-$01	 & 	4.2e$-$09	 & 	4580	 & 	21	 & 	123	 & 	426	 & 	345	 & 	333	 & 	0.56\\
&30	 & 	1.0944963099046681e$-$01	 & 	9.9e$-$09	 & 	7155	 & 	22	 & 	187	 & 	1154	 & 	413	 & 	407	 & 	1.48\\
&40	 & 	9.2110416532448419e$-$02	 & 	9.3e$-$09	 & 	8981	 & 	21	 & 	209	 & 	847	 & 	432	 & 	419	 & 	1.85\\
&50	 & 	8.2059696677895658e$-$02	 & 	9.0e$-$09	 & 	3176	 & 	21	 & 	218	 & 	937	 & 	450	 & 	428	 & 	2.57\\
&60	 & 	7.3972529936974535e$-$02	 & 	8.4e$-$09	 & 	7718	 & 	22	 & 	245	 & 	1750	 & 	484	 & 	465	 & 	4.54\\
&70	 & 	6.8954683287629770e$-$02	 & 	9.0e$-$09	 & 	2942	 & 	20	 & 	209	 & 	1228	 & 	421	 & 	409	 & 	4.35\\
&80	 & 	6.4065368587975027e$-$02	 & 	7.5e$-$09	 & 	8908	 & 	21	 & 	209	 & 	1366	 & 	419	 & 	419	 & 	5.69\\
&90	 & 	6.0345840506149377e$-$02	 & 	7.7e$-$09	 & 	3741	 & 	23	 & 	263	 & 	2595	 & 	500	 & 	493	 & 	9.71\\
&100	 & 	5.7226511303503126e$-$02	 & 	6.9e$-$09	 & 	2619	 & 	20	 & 	225	 & 	1390	 & 	448	 & 	425	 & 	5.45\\
\hline
\multirow{10}{*}{\rotatebox{90}{Sketch of America map}}
&10	 & 	1.1022680217297048e$-$01	 & 	6.2e$-$09	 & 	7191	 & 	22	 & 	226	 & 	1198	 & 	434	 & 	446	 & 	0.91\\
&20	 & 	7.0566193751253600e$-$02	 & 	4.2e$-$09	 & 	558	 & 	21	 & 	256	 & 	1541	 & 	455	 & 	466	 & 	2.14\\
&30	 & 	5.6728945376199408e$-$02	 & 	3.7e$-$09	 & 	3341	 & 	20	 & 	241	 & 	1451	 & 	428	 & 	441	 & 	3.36\\
&40	 & 	4.8479681841390981e$-$02	 & 	5.3e$-$09	 & 	7518	 & 	21	 & 	274	 & 	1227	 & 	506	 & 	484	 & 	4.18\\
&50	 & 	4.3079623896669902e$-$02	 & 	4.6e$-$09	 & 	9471	 & 	22	 & 	190	 & 	915	 & 	405	 & 	410	 & 	3.82\\
&60	 & 	3.8669223381267957e$-$02	 & 	9.0e$-$09	 & 	6539	 & 	22	 & 	328	 & 	2124	 & 	544	 & 	548	 & 	9.17\\
&70	 & 	3.5479536239229441e$-$02	 & 	9.3e$-$09	 & 	2774	 & 	20	 & 	290	 & 	1864	 & 	508	 & 	490	 & 	10.81\\
&80	 & 	3.3035213466515133e$-$02	 & 	3.7e$-$09	 & 	9176	 & 	23	 & 	281	 & 	1098	 & 	529	 & 	511	 & 	8.94\\
&90	 & 	3.1081859427563651e$-$02	 & 	9.4e$-$09	 & 	1815	 & 	20	 & 	296	 & 	967	 & 	528	 & 	496	 & 	11.20\\
&100	 & 	2.9185582405640495e$-$02	 & 	7.3e$-$09	 & 	2427	 & 	21	 & 	302	 & 	1271	 & 	525	 & 	512	 & 	10.55\\
\hline
\multirow{10}{*}{\rotatebox{90}{Eight-pointed star}}
&10	 & 	1.3040713549156926e$+$00	 & 	7.4e$-$09	 & 	2129	 & 	28	 & 	212	 & 	1405	 & 	471	 & 	492	 & 	0.40\\
&20	 & 	7.2447962534018184e$-$01	 & 	6.6e$-$09	 & 	1569	 & 	28	 & 	383	 & 	3437	 & 	682	 & 	663	 & 	1.71\\
&30	 & 	5.5386599521018731e$-$01	 & 	4.4e$-$09	 & 	9204	 & 	28	 & 	241	 & 	971	 & 	539	 & 	521	 & 	1.21\\
&40	 & 	4.6618323934219452e$-$01	 & 	4.1e$-$09	 & 	9298	 & 	28	 & 	312	 & 	1999	 & 	614	 & 	592	 & 	2.68\\
&50	 & 	4.1522639848076626e$-$01	 & 	3.7e$-$09	 & 	759	 & 	27	 & 	276	 & 	1974	 & 	572	 & 	546	 & 	3.51\\
&60	 & 	3.7211553871395336e$-$01	 & 	1.0e$-$08	 & 	8549	 & 	27	 & 	278	 & 	2235	 & 	568	 & 	541	 & 	4.74\\
&70	 & 	3.3883252892004639e$-$01	 & 	9.5e$-$09	 & 	3297	 & 	26	 & 	247	 & 	818	 & 	545	 & 	507	 & 	3.53\\
&80	 & 	3.1591211839929362e$-$01	 & 	8.9e$-$09	 & 	3712	 & 	26	 & 	266	 & 	1160	 & 	559	 & 	526	 & 	4.46\\
&90	 & 	2.9594385965306919e$-$01	 & 	8.8e$-$09	 & 	257	 & 	27	 & 	309	 & 	3063	 & 	613	 & 	579	 & 	10.15\\
&100	 & 	2.7907469799758938e$-$01	 & 	8.4e$-$09	 & 	8809	 & 	26	 & 	274	 & 	2596	 & 	540	 & 	533	 & 	7.12\\
\hline
\multirow{10}{*}{\rotatebox{90}{Minkowski island fractal}}
&10	 & 	9.9730787966959566e$-$01	 & 	5.8e$-$09	 & 	85	 & 	28	 & 	269	 & 	2287	 & 	490	 & 	549	 & 	0.79\\
&20	 & 	6.4157361024666815e$-$01	 & 	5.4e$-$09	 & 	114	 & 	29	 & 	233	 & 	943	 & 	533	 & 	523	 & 	1.03\\
&30	 & 	5.3259264476359935e$-$01	 & 	4.9e$-$09	 & 	9963	 & 	26	 & 	303	 & 	1059	 & 	587	 & 	563	 & 	1.74\\
&40	 & 	4.4275330752709730e$-$01	 & 	4.0e$-$09	 & 	9678	 & 	27	 & 	323	 & 	3034	 & 	587	 & 	593	 & 	4.22\\
&50	 & 	3.9534726462521569e$-$01	 & 	9.7e$-$09	 & 	3428	 & 	24	 & 	230	 & 	1155	 & 	479	 & 	470	 & 	2.82\\
&60	 & 	3.4918562471568843e$-$01	 & 	8.8e$-$09	 & 	8144	 & 	26	 & 	248	 & 	775	 & 	522	 & 	508	 & 	3.15\\
&70	 & 	3.2807457983514665e$-$01	 & 	9.3e$-$09	 & 	4385	 & 	27	 & 	278	 & 	2414	 & 	549	 & 	548	 & 	7.06\\
&80	 & 	3.1016946802464157e$-$01	 & 	9.6e$-$09	 & 	7306	 & 	27	 & 	298	 & 	3068	 & 	570	 & 	568	 & 	9.58\\
&90	 & 	2.9050989196451837e$-$01	 & 	8.6e$-$09	 & 	9902	 & 	26	 & 	346	 & 	3204	 & 	591	 & 	606	 & 	12.18\\
&100	 & 	2.7525512468934971e$-$01	 & 	9.0e$-$09	 & 	7719	 & 	26	 & 	377	 & 	2929	 & 	672	 & 	637	 & 	10.51\\
\hline
\multirow{10}{*}{\rotatebox{90}{Cesàro fractal}}
&10	 & 	2.1276864595120507e$-$01	 & 	5.5e$-$09	 & 	7054	 & 	22	 & 	180	 & 	1348	 & 	377	 & 	400	 & 	0.63\\
&20	 & 	1.3326878209070328e$-$01	 & 	3.8e$-$09	 & 	4870	 & 	23	 & 	278	 & 	1152	 & 	421	 & 	508	 & 	1.22\\
&30	 & 	1.0522163653090458e$-$01	 & 	4.1e$-$09	 & 	7850	 & 	23	 & 	245	 & 	1219	 & 	486	 & 	475	 & 	2.18\\
&40	 & 	9.3428035096055656e$-$02	 & 	9.4e$-$09	 & 	2646	 & 	21	 & 	193	 & 	871	 & 	424	 & 	403	 & 	2.43\\
&50	 & 	8.3314180748730718e$-$02	 & 	9.4e$-$09	 & 	1317	 & 	23	 & 	197	 & 	1322	 & 	441	 & 	427	 & 	3.68\\
&60	 & 	7.8415153849036370e$-$02	 & 	8.8e$-$12	 & 	9229	 & 	31	 & 	433	 & 	3704	 & 	674	 & 	743	 & 	10.83\\
&70	 & 	7.0460470988540802e$-$02	 & 	8.4e$-$09	 & 	5859	 & 	22	 & 	289	 & 	1698	 & 	544	 & 	509	 & 	6.96\\
&80	 & 	6.6110791995596219e$-$02	 & 	8.6e$-$09	 & 	7697	 & 	21	 & 	329	 & 	1694	 & 	581	 & 	539	 & 	9.38\\
&90	 & 	6.1956278506660224e$-$02	 & 	7.8e$-$09	 & 	5722	 & 	23	 & 	318	 & 	3185	 & 	568	 & 	548	 & 	15.12\\
&100	 & 	5.8465961897078852e$-$02	 & 	8.6e$-$09	 & 	3205	 & 	21	 & 	296	 & 	1729	 & 	548	 & 	506	 & 	8.32\\
\hline
\end{tabular}}
\end{center}
\caption{Details of the obtained solutions and performance metrics of the application of Algencan to the five considered covering problems.}
\label{tab:results}
\end{table}

\begin{figure}[ht!]
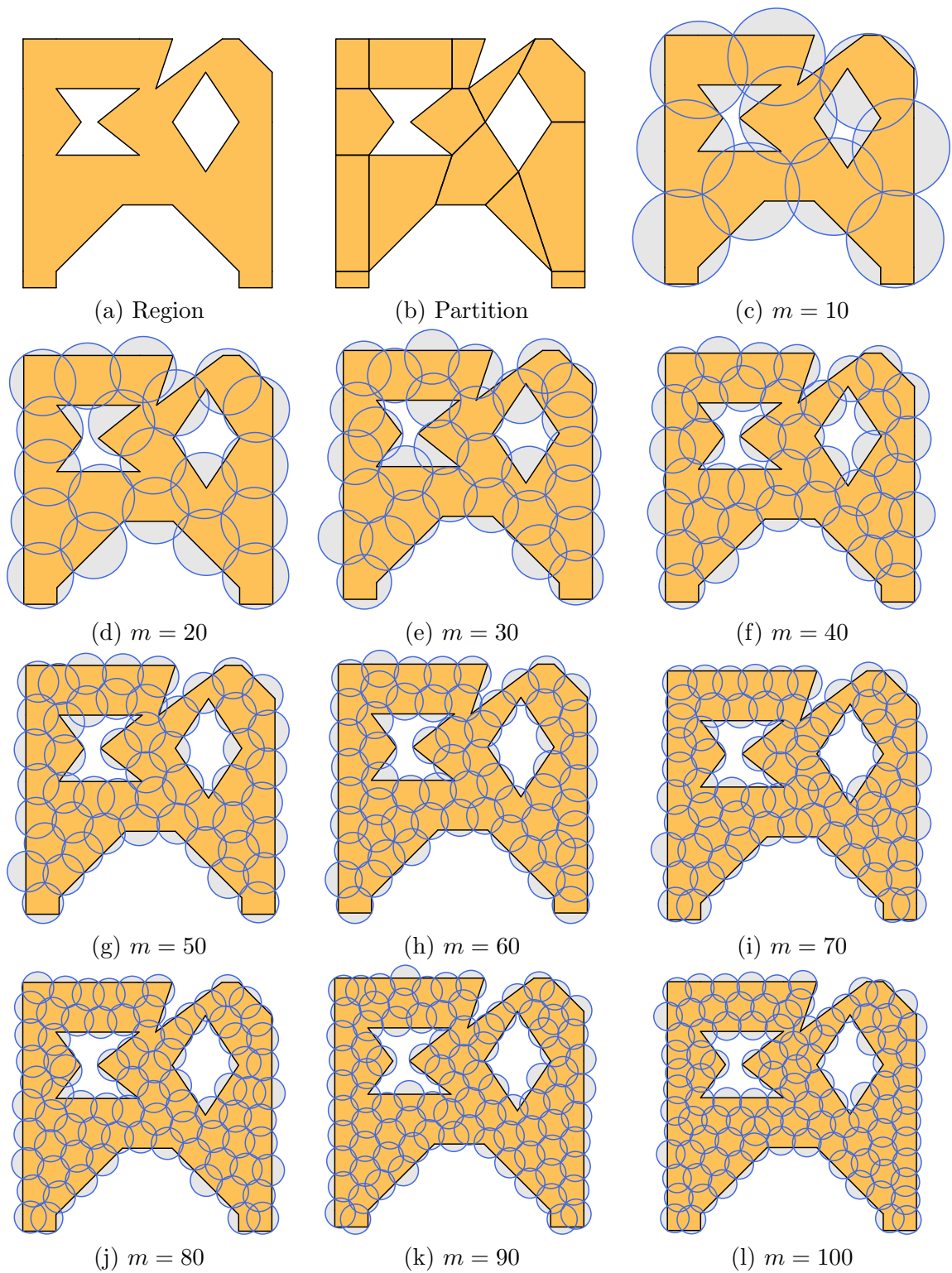

\begin{center}
\begin{tabular}{ccc}
\includegraphics[scale=0.8]{region-stoyan.mps} &
\includegraphics[scale=0.8]{stoyanpartition.mps} &
\includegraphics[scale=0.8]{covering-stoyan10-10.mps} \\
(a) Region & (b) Partition & (c) $m=10$ \\
\includegraphics[scale=0.8]{covering-stoyan20-20.mps} &
\includegraphics[scale=0.8]{covering-stoyan30-30.mps} &
\includegraphics[scale=0.8]{covering-stoyan40-40.mps} \\
(d) $m=20$ & (e) $m=30$ & (f) $m=40$ \\
\includegraphics[scale=0.8]{covering-stoyan50-50.mps} &
\includegraphics[scale=0.8]{covering-stoyan60-60.mps} &
\includegraphics[scale=0.8]{covering-stoyan70-70.mps} \\
(g) $m=50$ & (h) $m=60$ & (i) $m=70$ \\
\includegraphics[scale=0.8]{covering-stoyan80-80.mps} &
\includegraphics[scale=0.8]{covering-stoyan90-90.mps} &
\includegraphics[scale=0.8]{covering-stoyan100-100.mps} \\
(j) $m=80$ & (k) $m=90$ & (l) $m=100$ \\
\end{tabular}
\end{center}
\caption{(a) Non-convex polygon with holes considered in~\cite{stoyan}, partitioned into $p=14$ convex polygons as depicted in (b). Pictures from (c) to (l) display the solutions found with $m \in \{10,\dots,100\}$.} 
\label{fig:stoyan}
\end{figure}

\begin{figure}[ht!]
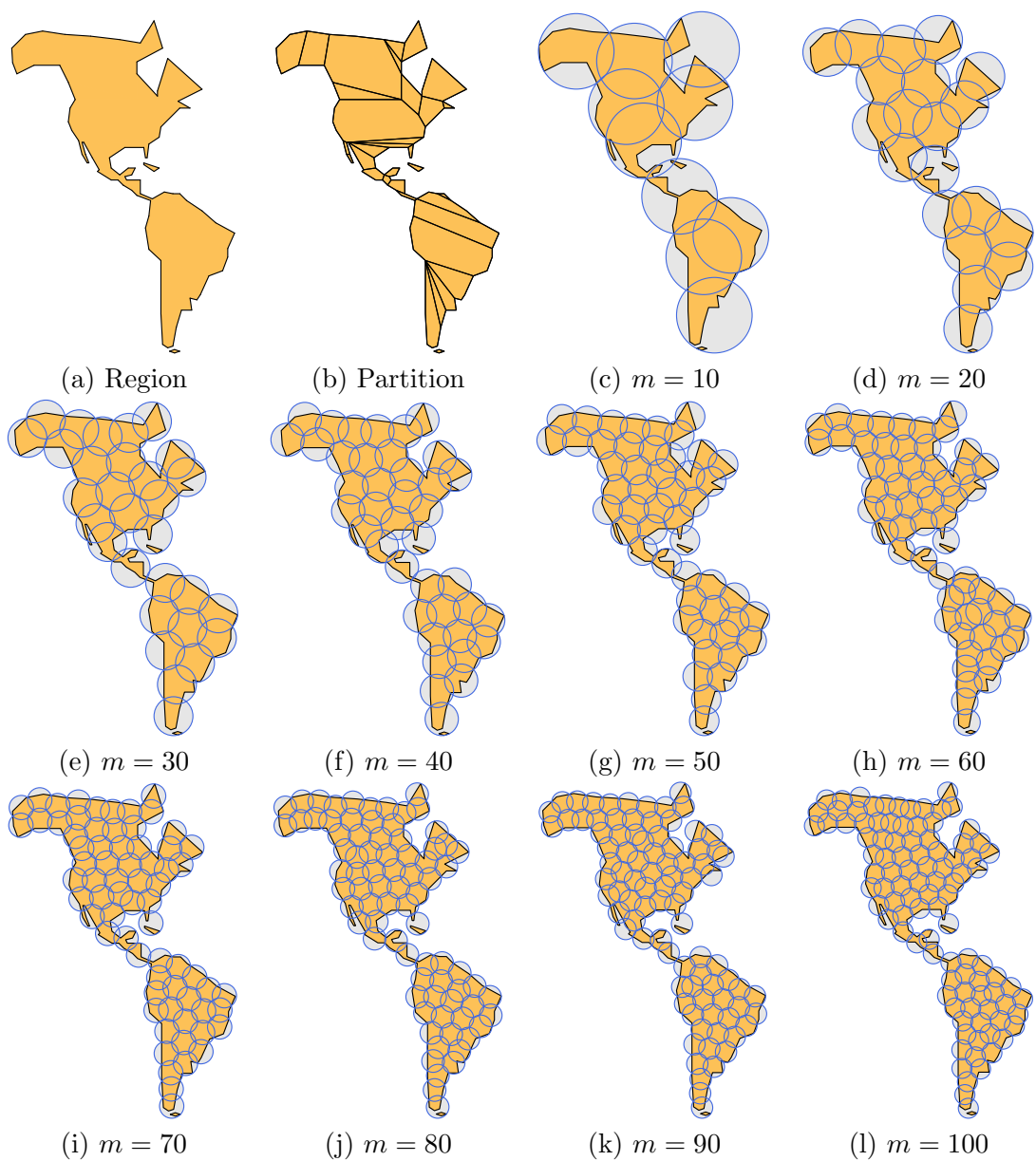

\begin{center}
\begin{tabular}{cccc}
\includegraphics[scale=0.6]{region-america.mps} &
\includegraphics[scale=0.6]{americapartition.mps} &
\includegraphics[scale=0.6]{covering-america10-10.mps} &
\includegraphics[scale=0.6]{covering-america20-20.mps} \\
(a) Region & (b) Partition & (c) $m=10$ & (d) $m=20$ \\
\includegraphics[scale=0.6]{covering-america30-30.mps} &
\includegraphics[scale=0.6]{covering-america40-40.mps} &
\includegraphics[scale=0.6]{covering-america50-50.mps} &
\includegraphics[scale=0.6]{covering-america60-60.mps} \\
(e) $m=30$ & (f) $m=40$ & (g) $m=50$ & (h) $m=60$ \\
\includegraphics[scale=0.6]{covering-america70-70.mps} &
\includegraphics[scale=0.6]{covering-america80-80.mps} &
\includegraphics[scale=0.6]{covering-america90-90.mps} &
\includegraphics[scale=0.6]{covering-america100-100.mps} \\
(i) $m=70$ & (j) $m=80$ & (k) $m=90$ & (l) $m=100$ \\
\end{tabular}
\end{center}
\caption{(a) Sketch of America available from~\cite[\S13.2]{bmbook} and already considered in~\cite{coveringfirst}, partitioned into $p=34$ convex polygons as depicted in (b). Pictures from (c) to (l) display the solutions found with $m \in \{10,\dots,100\}$.}
\label{fig:america}
\end{figure}

\begin{figure}[ht!]
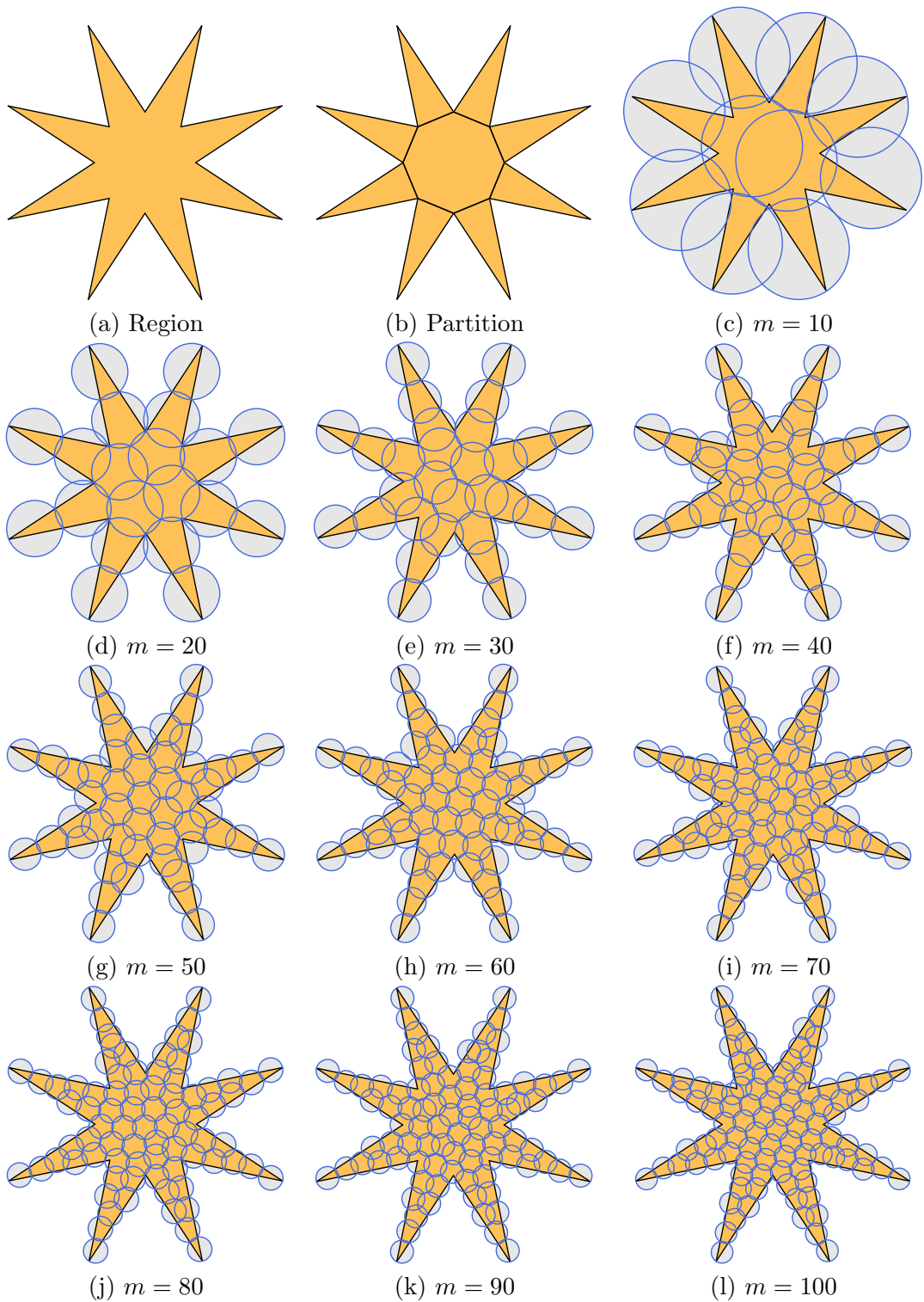

\begin{center}
\begin{tabular}{ccc}
\includegraphics[scale=0.75]{region-star.mps} &
\includegraphics[scale=0.75]{starpartition.mps} &
\includegraphics[scale=0.75]{covering-star10-10.mps} \\
(a) Region & (b) Partition & (c) $m=10$ \\
\includegraphics[scale=0.75]{covering-star20-20.mps} &
\includegraphics[scale=0.75]{covering-star30-30.mps} &
\includegraphics[scale=0.75]{covering-star40-40.mps} \\
(d) $m=20$ & (e) $m=30$ & (f) $m=40$ \\
\includegraphics[scale=0.75]{covering-star50-50.mps} &
\includegraphics[scale=0.75]{covering-star60-60.mps} &
\includegraphics[scale=0.75]{covering-star70-70.mps} \\
(g) $m=50$ & (h) $m=60$ & (i) $m=70$ \\
\includegraphics[scale=0.75]{covering-star80-80.mps} &
\includegraphics[scale=0.75]{covering-star90-90.mps} &
\includegraphics[scale=0.75]{covering-star100-100.mps} \\
(j) $m=80$ & (k) $m=90$ & (l) $m=100$ \\
\end{tabular}
\end{center}
\caption{(a) Eight-pointed star, partitioned into $p=9$ convex polygons as depicted in (b). Pictures from (c) to (l) display the solutions found with $m \in \{10,\dots,100\}$.} 
\label{fig:star}
\end{figure}

\begin{figure}[ht!]
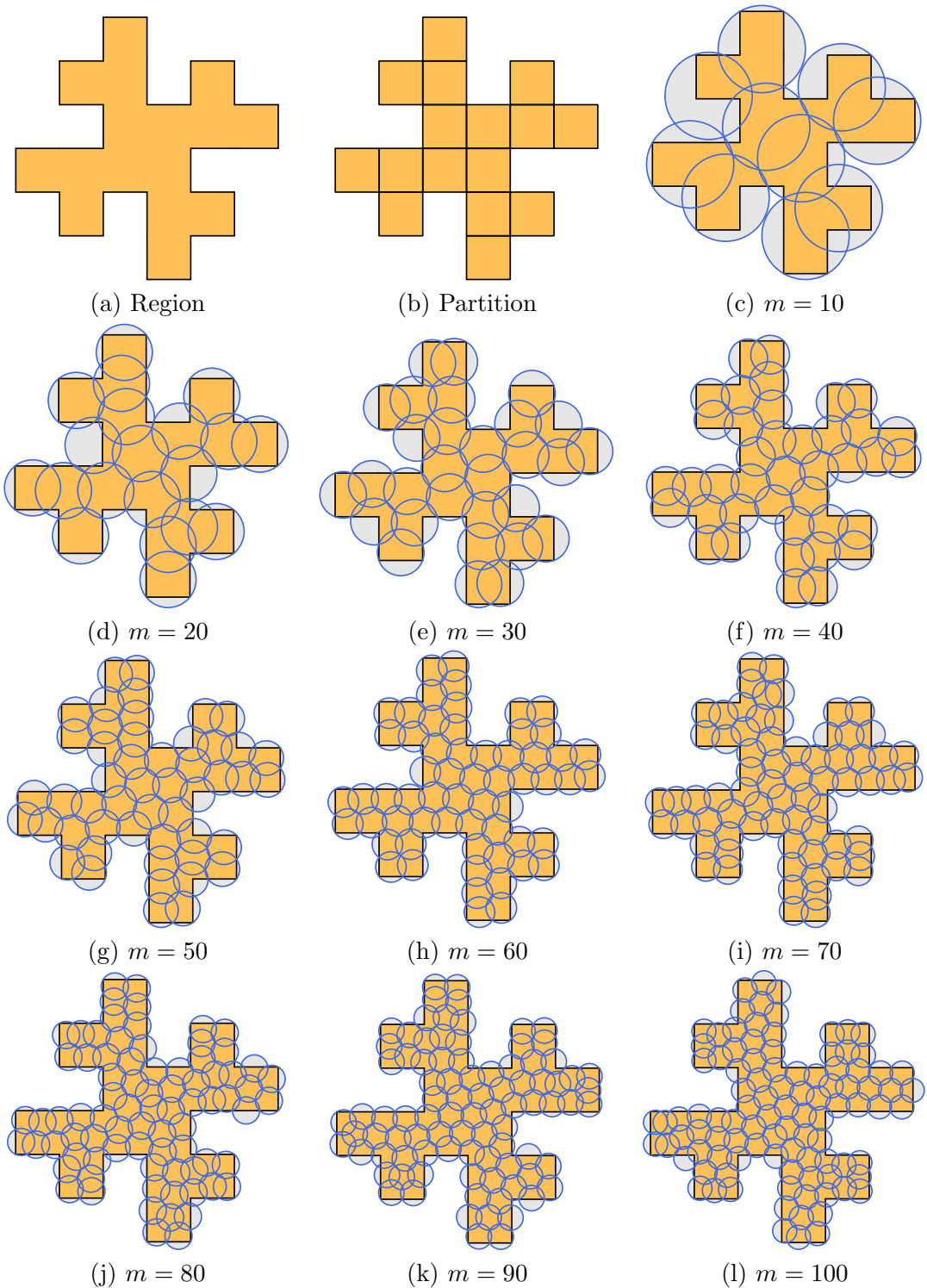

\begin{center}
\begin{tabular}{ccc}
\includegraphics[scale=0.9]{region-minkowski.mps} &
\includegraphics[scale=0.9]{minkowskipartition.mps} &
\includegraphics[scale=0.9]{covering-minkowski10-10.mps} \\
(a) Region & (b) Partition & (c) $m=10$ \\
\includegraphics[scale=0.9]{covering-minkowski20-20.mps} &
\includegraphics[scale=0.9]{covering-minkowski30-30.mps} &
\includegraphics[scale=0.9]{covering-minkowski40-40.mps} \\
(d) $m=20$ & (e) $m=30$ & (f) $m=40$ \\
\includegraphics[scale=0.9]{covering-minkowski50-50.mps} &
\includegraphics[scale=0.9]{covering-minkowski60-60.mps} &
\includegraphics[scale=0.9]{covering-minkowski70-70.mps} \\
(g) $m=50$ & (h) $m=60$ & (i) $m=70$ \\
\includegraphics[scale=0.9]{covering-minkowski80-80.mps} &
\includegraphics[scale=0.9]{covering-minkowski90-90.mps} &
\includegraphics[scale=0.9]{covering-minkowski100-100.mps} \\
(j) $m=80$ & (k) $m=90$ & (l) $m=100$ \\
\end{tabular}
\end{center}
\caption{(a) Minkowski island fractal, partitioned into $p=16$ convex polygons as depicted in (b). Pictures from (c) to (l) display the solutions found with $m \in \{10,\dots,100\}$.} 
\label{fig:minkowski}
\end{figure}

\begin{figure}[ht!]
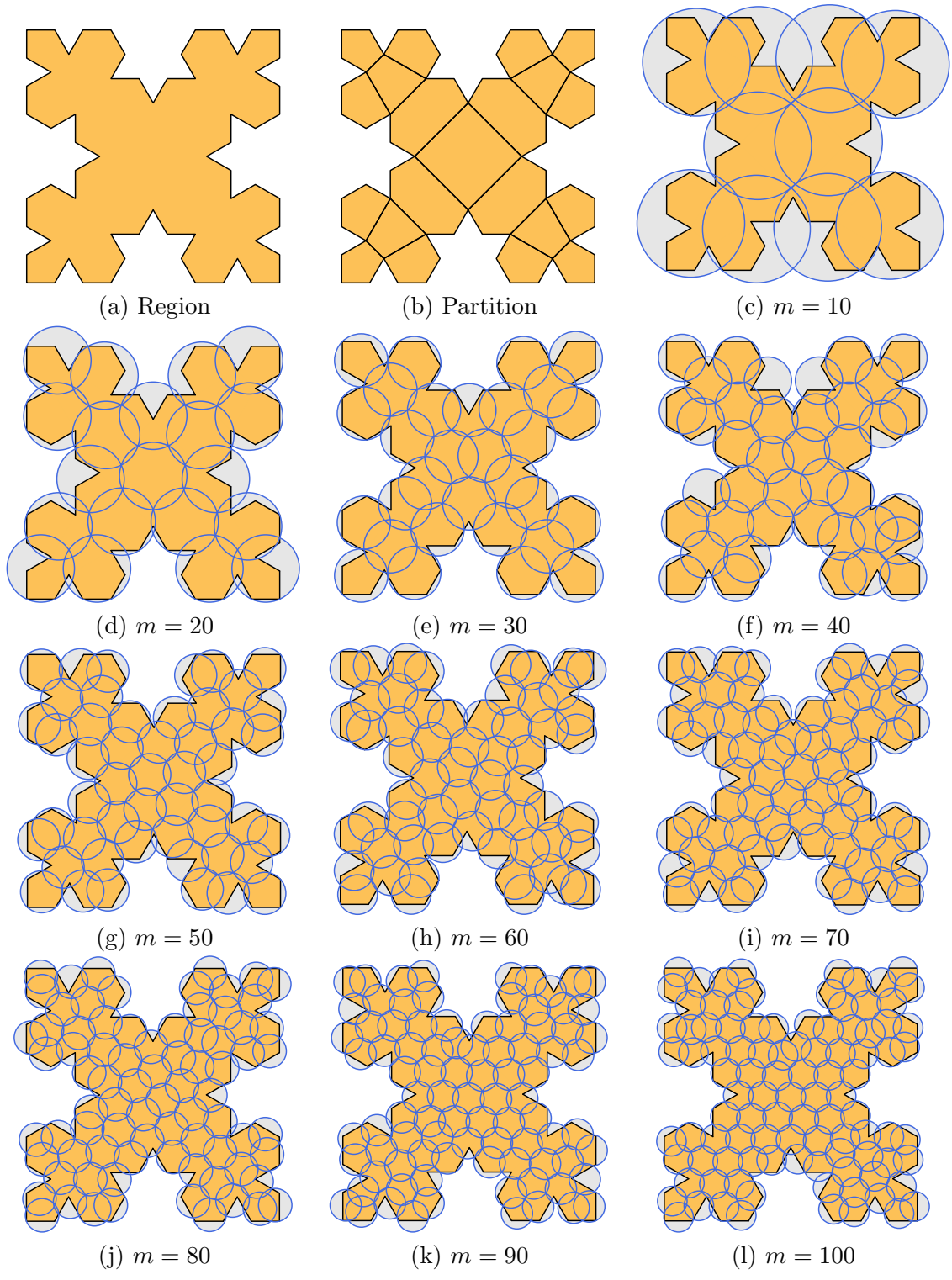

\begin{center}
\begin{tabular}{ccc}
\includegraphics[scale=0.8]{region-cesaro.mps} &
\includegraphics[scale=0.8]{cesaropartition.mps} &
\includegraphics[scale=0.8]{covering-cesaro10-10.mps} \\
(a) Region & (b) Partition & (c) $m=10$ \\
\includegraphics[scale=0.8]{covering-cesaro20-20.mps} &
\includegraphics[scale=0.8]{covering-cesaro30-30.mps} &
\includegraphics[scale=0.8]{covering-cesaro40-40.mps} \\
(d) $m=20$ & (e) $m=30$ & (f) $m=40$ \\
\includegraphics[scale=0.8]{covering-cesaro50-50.mps} &
\includegraphics[scale=0.8]{covering-cesaro60-60.mps} &
\includegraphics[scale=0.8]{covering-cesaro70-70.mps} \\
(g) $m=50$ & (h) $m=60$ & (i) $m=70$ \\
\includegraphics[scale=0.8]{covering-cesaro80-80.mps} &
\includegraphics[scale=0.8]{covering-cesaro90-90.mps} &
\includegraphics[scale=0.8]{covering-cesaro100-100.mps} \\
(j) $m=80$ & (k) $m=90$ & (l) $m=100$ \\
\end{tabular}
\end{center}
\caption{(a) Ces\`aro fractal, partitioned into $p=21$ convex polygons as depicted in (b). Pictures from (c) to (l) display the solutions found with $m \in \{10,\dots,100\}$.} 
\label{fig:cesaro}
\end{figure}

\section{Final considerations} \label{sec:conclusion}

From the shape optimization perspective, the present work completes \cite{coveringfirst} with a second-order shape sensitivity analysis for nonsmooth domains defined as a union of balls intersected with the domain to be covered.
The analysis of several singular cases seems to indicate that the assumptions used to derive $\nabla^2 G$ cannot be weakened. From the practical point of view, the exact calculation of $G$ and its first- and second-order derivatives represents the possibility, absent in~\cite{coveringfirst}, of solving very efficiently and with high accuracy, problems in which the area to be covered is given by a non-convex polygon.

We now discuss potential extensions of our approach.
Redefining $\Om(\bx,\boldsymbol{r}):=\cup_{i=1}^m B(x_i,r_i)$ and $G(\bx,\boldsymbol{r}):=\vol(A \setminus \Omega(\bx,\boldsymbol{r}))$, where $\boldsymbol{r}:=\{r_i\}_{i=1}^m$, expressions and algorithms to approximate $G(\bx,\boldsymbol{r})$, $\nabla G(\bx,\boldsymbol{r})$ and $\nabla^2 G(\bx,\boldsymbol{r})$ can be obtained with straightforward modifications to the introduced approach. From the practical point of view, underlying partitions that lead to exact calculations might be implemented using power diagrams~\cite{powerdia1,powerdia2}.
We observe that formulae (\ref{derivadasegunda}, \ref{d2Grr}, \ref{d2xixiG}, \ref{d2Gxixj}, \ref{eq:781}) are valid for general sets $A$ satisfying Assumptions~\ref{a1} and~\ref{a3}, but the exact numerical computation of $G$, $\nabla G$ and $\nabla^2 G$  requires $A$ to be a union of  non-overlapping convex polygons. 
The exact calculation of $\nabla G$ and $\nabla^2 G$ can actually be performed for any set $A$ such that the intersections of $\partial A$ with circles can be computed analytically.
However, the possibilities of computing $G$ exactly are more restricted as this requires the computation of integrals on subsets of $\partial A$.
In some specific cases, this calculation could be done exactly, for instance when $A$ is a union of balls.
Nevertheless, in more general cases the integrals on subsets of $\partial A$ could be efficiently approximated  with high accuracy.

The case where $\Om(\bx,r)$ is a union of objects with arbitrary (sufficiently smooth) shapes is challenging and would require a generalization of the techniques developed in \cite{coveringfirst} and in the present paper.
A key idea of our construction of the mappings $T_t$, which is still valid for objects with arbitrary shapes, is that the value of $T_t$
at the intersection points of the objects' boundaries (or the intersections with $\partial A$) is fully determined by the motion of these singular points, whereas the value of $T_t$ at the regular points of $\partial\Om(\bx,r)$ is underdetermined.
When the objects are balls, this underdetermination is conveniently resolved using polar coordinates to extend $T_t$ to the regular parts of $\partial\Om(\bx,r)$.
In the case of arbitrary shaped-objects however, a more general construction is required.
%
A generalization to three dimensions of the nonsmooth shape optimization techniques developed in \cite{coveringfirst} and in the present paper is conceivable but would also require a more general approach  to build  $T_t$.
Another interesting direction for future investigations would be the application of these techniques for optimization problems involving partial differential equations. The calculation of the shape derivatives would depend on the specific partial differential equation, but the construction of the transformations $T_t$ would remain the same.

\bibliographystyle{plain}
\bibliography{coveringsecond}

\appendix
\small

\section{Intersection of a convex polygon and a ball} \label{appen:alg4}

This appendix describes an algorithm that is an adaptation of a single iteration of the Sutherland-Hodgman algorithm~\cite{sutherland1974} to compute the intersection between a convex polygon and a ball. If the polygon has $\zeta$ vertices, a simple inspection of the algorithm makes it clear the algorithm has worst-case time complexity $\mathcal{O}(\zeta)$ and that the output is a curvilinear convex polygon (convex polygon whose sides are segments or circular arcs) with at most $2\zeta$ vertices.

\begin{algorithm}[ht!]
\caption{Intersection between a convex polygon $W$ and a closed ball $\overline{B(x,r)}$} \label{inter_pol_circle}
\KwInput{Ball given by radius $r$ and center $x$. Convex polygon $W$, given by a list of vertices $\{w_i\}_{i=1}^\zeta$, in counter-clockwise order.}
\KwOutput{Curvilinear polygon $S$, given by a list of tuples $\{(s_i, a_i)\}_{i=1}^\xi \in \{\mathbb{R}^2\times\{0, 1\}\}^\xi$, such that $(s_i, s_{(i~\mathrm{mod}~\xi) + 1})$ are the extreme points, in counter-clockwise order, of a line segment, if $a_i = 0$, or of an arc with center $x$, if $a_i = 1$. If $\overline{B(x,r)}\subset W$, then $S = \overline{B(x,r)}$ is represented by $\{\big((x_1, x_2 + r), 1\big)\}$.}
Set $S$ as an empty list; and denote by $S\frown{}s$ the action of adding $s$ to the end of $S$.\\
\For{$i=1, \dots, \zeta$}{
    $p \gets w_i$ and $q \gets w_{(i~\mathrm{mod}~\zeta) + 1}$\\
    \If{$p \in \overline{B(x,r)}$ and $q \in \overline{B(x,r)}$}{
        $S \gets S\frown (p, 0)$
    }
    \uElseIf{$p \in \overline{B(x,r)}$ and $q \notin \overline{B(x,r)}$}{
        $a \gets \overline{pq} \cap \partial{}B(x,r)$\\
        \If{$p \in B(x,r)$}{
            $S \gets S\frown (p, 0)$
        }
        $S \gets S \frown (a, 1)$
    }
    \uElseIf{$p \notin \overline{B(x,r)}$ and $q \in \overline{B(x,r)}$}{
        $a \gets \overline{pq} \cap \partial{}B(x,r)$\\
        $S \gets S\frown (a, 0)$
    }
    \uElseIf{$p \notin \overline{B(x,r)}$ and $q \notin \overline{B(x,r)}$}{
        \If{$| \, \overline{pq} \cap \partial{}B(x,r) | = 2$}{
            $S \gets S\frown (a, 0) \frown (b, 1)$, where $\overline{pq} \cap \partial{}B(x,r) = \{a, b\}$, ordered in the direction of $\overrightarrow{pq}$
        }
    }
}
\If{$|S| = 0$}{
    Set $S \gets \{\big((x_1, x_2 + r), 1\big)\}$.
}
\Return $S$
\end{algorithm}

\section{Problem data} \label{appen:probs}

In this appendix, we provide the description of the five problems illustrated in Figures~\ref{fig:stoyan}b--\ref{fig:cesaro}b and considered in the numerical experiments. The description of each problem consists in the list of the vertices, in counterclockwise order, of the convex polygons that compose the partition of the problem. The Fortran source code that describes the problems, as well as the source code to reproduce all numerical experiments, is available at \url{http://www.ime.usp.br/~egbirgin/}.

The non-convex polygon with holes shown in Figure~\ref{fig:stoyan}, with $\mathrm{Vol}(A) \approx 0.69111111111111101$, is composed by $p=14$ convex polygons. The vertices of polygons $A_1,\dots,A_{14}$ are the ones given below multiplied by~$\frac{1}{150}$:
\newline $\mathcal{V}(A_1) = \{(0,100), (0,70), (20,70), (20,100)\}$, 
\newline $\mathcal{V}(A_2) = \{(35,50), (20,70), (0,70), (0,30), (20,30)\}$,
\newline $\mathcal{V}(A_3) = \{(0,30), (0,-40), (20,-40), (20,30)\}$, 
\newline $\mathcal{V}(A_4) = \{(0,-40), (0,-50), (20,-50), (20,-40)\}$,
\newline $\mathcal{V}(A_5) = \{(20,100), (20,70), (70,70), (70,100)\}$,
\newline $\mathcal{V}(A_6) = \{(70,70), (80,70), (90,100), (70,100)\}$,
\newline $\mathcal{V}(A_7) = \{(80,70), (70,70), (45,50), (70,30), (90,50)\}$,
\newline $\mathcal{V}(A_8) = \{(70,30), (20,30), (20,-40), (60,0)\}$,
\newline $\mathcal{V}(A_9) = \{(110,20), (90,50), (70,30), (60,0), (90,0)\}$,
\newline $\mathcal{V}(A_{10}) = \{(130,-40), (130,-50), (150,-50), (150,-40)\}$,
\newline $\mathcal{V}(A_{11}) = \{(130,50), (110,20), (130,-40), (150,-40), (150,50)\}$, 
\newline $\mathcal{V}(A_{12}) = \{(130,100), (120,100), (110,80), (130,50), (150,50), (150,80)\}$,
\newline $\mathcal{V}(A_{13}) = \{(110,80), (120,100), (80,70), (90,50)\}$,
\newline $\mathcal{V}(A_{14}) = \{(110,20), (90,0), (130,-40)\}$.


The sketch of America shown in Figure~\ref{fig:america}, with $\mathrm{Vol}(A) \approx 0.17573124999999992$, is composed by $p=34$ convex polygons. The vertices of polygons $A_1,\dots,A_{34}$ are the ones given below multiplied by $\frac{1}{20}$:
\newline $\mathcal{V}(A_1) = \{(4.5,24), (3.5,23.8), (2.7,23), (2.75,22.15), (3,21.5), (4,22)\}$,
\newline $\mathcal{V}(A_2) = \{(4.5,24), (4,22), (5.5,22), (5.8,23.8)\}$, $\mathcal{V}(A_3) = \{(6,21), (6.4,20), (10,20)\}$,
\newline $\mathcal{V}(A_4) = \{(5.5,22), (6,21), (10,20), (10,21.5), (9,23.5), (7.3,23.7), (5.8,23.8)\}$,
\newline $\mathcal{V}(A_5) = \{(10,20), (11,19.1), (11.2,20.2), (10,21.5)\}$, 
\newline $\mathcal{V}(A_6) = \{(10,21.5), (10,22.2), (2.7,23), (9,23.5)\}$,
\newline $\mathcal{V}(A_7) = \{(10,22.2), (10.2,23.3), (9,23.5)\}$,
\newline $\mathcal{V}(A_8) = \{(10,22.2), (11.5,23), (11,24.6), (10.2,23.3)\}$,
\newline $\mathcal{V}(A_9) = \{(11,19.1), (11.4,18.4), (12.5,19.5), (12.4,19.9), (11.2,20.2)\}$,
\newline $\mathcal{V}(A_{10}) = \{(12.4,19.9), (13.8,20.6), (11.8,22.4), (11.2,20.5), (11.2,20.2)\}$,
\newline $\mathcal{V}(A_{11}) = \{(12.5,19.5), (13.1,19.5), (12.4,19.9)\}$,
\newline $\mathcal{V}(A_{12}) = \{(6.4,20), (6.1,19.5), (6,18.7), (6.2,18.2), (6.6,17.6), (6.8,17.5), (6.9,17.5), (11.3,17.8), \newline (11.4,18.4), (11,19.1), (10,20)\}$,
\newline $\mathcal{V}(A_{13}) = \{(6.9,17.5), (10.7,17.4), (11.3,17.8)\}$,
\newline $\mathcal{V}(A_{14}) = \{(10.4,17.2), (10.5,16.6), (10.6,16.6), (10.7,17.4)\}$,
\newline $\mathcal{V}(A_{15}) = \{(6.9,17.5), (9.3,17.2), (10.4,17.2), (10.7,17.4)\}$, 
\newline $\mathcal{V}(A_{16}) = \{(6.9,17.5), (8.4,16.6), (9.3,17.2)\}$,
\newline $\mathcal{V}(A_{17}) = \{(6.9,17.5), (7.4,16.6), (7.8,15.9), (8.5,16), (8.4,16.6)\}$,
\newline $\mathcal{V}(A_{18}) = \{(7.8,15.9), (7.7,15.8), (8.5,15.3), (8.9,15.3), (9,15.6), (8.5,16)\}$,
\newline $\mathcal{V}(A_{19}) = \{(8.9,15.3), (9.2,15), (9.4,15.3), (9.3,15.5), (9,15.6)\}$,
\newline $\mathcal{V}(A_{20}) = \{(9.3,15.5), (9.7,15.6), (9.9,16), (9.5,16), (9,15.6)\}$,
\newline $\mathcal{V}(A_{21}) = \{(6.6,17.6), (6.8,16.8), (7,16.8), (6.8,17.5)\}$,
\newline $\mathcal{V}(A_{22}) = \{(6.8,16.8), (7.1,16.3), (7.2,16.3), (7,16.8)\}$,
\newline $\mathcal{V}(A_{23}) = \{(9.2,15), (9.7,14.7), (10.2,14.5), (10.2,15.3), (9.4,15.3)\}$,
\newline $\mathcal{V}(A_{24}) = \{(9.7,14.7), (10,14.4), (10.8,14.1), (10.9,14.2), (10.2,14.5)\}$,
\newline $\mathcal{V}(A_{25}) = \{(10.4,16.2), (11,15.8), (11.3,16), (10.4,16.3)\}$,
\newline $\mathcal{V}(A_{26}) = \{(10.7,13.2), (10.5,12.5), (10.7,11.25), (11.4,10.6), (14.2,9.7), (15,10), (15.3,10.8), \newline (15.3,11.3)\}$,
\newline $\mathcal{V}(A_{27}) = \{(12.2,5.4), (11.9,5.3), (12.2,5.2), (12.2,5.4)\}$,
\newline $\mathcal{V}(A_{28}) = \{(15.3,11.3), (15.7,12.2), (14.6,12.8), (10.9,14.2), (10.8,14.1), (10.7,13.2)\}$,
\newline $\mathcal{V}(A_{29}) = \{(14.6,12.8), (13.8,13.5), (12.9,14.1), (12.1,14.5), (11.6,14.6), (10.9,14.2)\}$,
\newline $\mathcal{V}(A_{30}) = \{(12.9,14.1), (12.5,14.5), (12.1,14.5)\}$,
\newline $\mathcal{V}(A_{31}) = \{(11.4,10.6), (11.4,7.5), (11.5,5.7), (11.8,5.5), (12.1,5.6), (12.3,6.7)\}$,
\newline $\mathcal{V}(A_{32}) = \{(12.3,6.7), (12.6,7.7), (11.4,10.6)\}$,
\newline $\mathcal{V}(A_{33}) = \{(12.6,7.7), (13.2,7.7), (13.1,8.4), (11.4,10.6)\}$,
\newline $\mathcal{V}(A_{34}) = \{(13.1,8.4), (13.5,8.3), (13.7,8.6), (14.2,9.7), (11.4,10.6)\}$. 

The star shape shown in Figure~\ref{fig:star} is composed by $p=9$ convex polygons, namely, a regular octagon and eight isosceles triangles with height equal to twice the radius of the circumscribed circle to the octagon. The octagon, named $A_1$, is centered at the origin and its sides have length equal to one. Denote by $R$ the radius of the circumscribed circle to the octagon, which is given by $R = 1/(2 \sin(\pi/8))$. The vertices of the octagon are then given by $\mathcal{V}(A_1) = \{(R\cos(k \pi/4), R\sin(k \pi/4))\}_{k=1}^{8}$. The height of the isosceles triangles, which we denote by $A_2,\dots,A_9$, is equal to $2R$. Let $d = ([\frac{1}{2}(R \cos (\pi/4) +R \cos (\pi/2))]^2 + [\frac{1}{2}(R \sin (\pi/4) +R \sin (\pi/2))]^2)^{1/2}$ be the distance of the origin to the middle point of any edge of the octagon; and let $d' = d + 2R$.  The vertices of $A_2$ are given by $\mathcal{V}(A_2) = \{(R\cos(\pi/4), R\sin(\pi/4)), (R\cos(2\pi), R\sin(2\pi)), (d'\cos(\pi/8), d'\sin(\pi/8))\}$. The vertices of $A_i$, for $i=3,\dots,9$, are given by a rotation of $\pi/4$ applied to the vertices of $A_{i-1}$. The area of $A$ is given by
\begin{equation*}
\mathrm{Vol}(A) = \sum_{j=1}^{9} \mathrm{Vol}(A_j) = 2(1+\sqrt{2}) + 8R \approx 15.28093084375720.
\end{equation*}


The Minkowski island fractal shown in Figure~\ref{fig:minkowski}, with $\mathrm{Vol}(A) = 16$, is composed by $p=16$ unit squares. Each square can be represented by its bottom-left corners $\mathcal{V} = \{(3,0),(1,1), (3,1), (4,1)$, $(0,2), (1,2), (2,2)$, $(3,2), (2,3), (3,3), (4,3), (5,3), (1,4), (2,4), (4,4), (2,5)\}$.

The Cesàro fractal shown in Figure~\ref{fig:cesaro}, with $\mathrm{Vol}(A) = 0.72201653705684687$, is composed by $p=21$ convex polygons. It can be seen that this partition is composed by four symmetrical groups of convex polygons, in addition to a central square. We display here the vertices of the central square, namely $A_1$, and the vertices of the polygons in the bottom-left group, namely $A_2,\dots,A_6$. The vertices of the polygons of the other three groups can be obtained by rotating, around $(0.5, 0.5)$, an angle of $\pi/2$, $\pi$ and $3\pi/2$, respectively. The vertices of polygons $A_1,\dots,A_6$ are the ones given below multiplied by $\frac{1}{18}$:
\newline $\mathcal{V}(A_1) = \{(9,3\sqrt{3}), (18-3\sqrt{3},9), (9,18-3\sqrt{3}), (3\sqrt{3},9)\}$,
\newline $\mathcal{V}(A_2) = \{(0,0), (2,0), (3,\sqrt{3}), (\sqrt{3},3), (0,2)\}$,
\newline $\mathcal{V}(A_3) = \{(4,0), (6,0), (7,\sqrt{3}), (6,2\sqrt{3}), (3,\sqrt{3})\}$,
\newline $\mathcal{V}(A_4) = \{(6,2\sqrt{3}), (8,2\sqrt{3}), (9,3\sqrt{3}), (3\sqrt{3},9), (2\sqrt{3},8), (2\sqrt{3},6)\}$,
\newline $\mathcal{V}(A_5) = \{(0,6), (0,4), (\sqrt{3},3), (2\sqrt{3},6), (\sqrt{3},7)\}$,
\newline $\mathcal{V}(A_6) = \{(3,\sqrt{3}), (6,2\sqrt{3}), (2\sqrt{3},6), (\sqrt{3},3)\}$.


\end{document}